\documentclass[reqno]{amsart}
\usepackage[T1]{fontenc}
\usepackage[pdftex]{color,graphicx}
\usepackage{wrapfig} \usepackage[numbers,square]{natbib}
\usepackage{caption}
\title{Singularities of axisymmetric free surface flows with gravity}
\author[E. Varvaruca]{Eugen Varvaruca}
\address{Department of Mathematics and Statistics, University of Reading, Whiteknights, PO Box 220,
Reading RG6 6AX, U.K.}
\email{e.varvaruca@reading.ac.uk}
\author[G.S. Weiss]{Georg S. Weiss}
\address{Department of Mathematics, Heinrich Heine University, 40225 D\"usseldorf, Germany}
\email{weiss@math.uni-duesseldorf.de}

\date{}


\theoremstyle{plain}
\newtheorem{theorem}{Theorem}[section]

\newtheorem{lemma}[theorem]{Lemma}
\newtheorem{proposition}[theorem]{Proposition}

\theoremstyle{definition}
\newtheorem{definition}[theorem]{Definition}
\theoremstyle{definition}
\newtheorem{remark}[theorem]{Remark}
\numberwithin{equation}{section}


\def\R{{\bf R}}

\def\div{\textrm{\rm div }}\def\curl{\textrm{\rm curl }}

\def\dist{\textrm{\rm dist}}

\def\dh{\,d{\mathcal H}^1}
\def\dhone{\,d{\mathcal H}^1}

\newcommand{\Om}{\Omega}

\newcommand{\ipbrx}{\int_{\partial B_r^+(0)}}

\newcommand{\non}{\nonumber}
\newcommand{\be}{\begin{equation}}
\newcommand{\ee}{\end{equation}}
\newcommand{\we}{\frac{1}{x_1}}

\begin{document}

\maketitle

\begin{abstract}
We consider a steady axisymmetric solution of the Euler equations
for a fluid (incompressible and with zero vorticity) with a free surface,
acted on only by gravity.
We analyze stagnation points as well as points on the axis of
symmetry.
At points on the axis of
symmetry which are not stagnation points,
{\em constant velocity motion} is the only blow-up profile
consistent with the invariant scaling of the equation.
This suggests the presence of downward pointing cusps
at those points.

At {\em stagnation points} on the axis of
symmetry, the unique blow-up profile
consistent with the invariant scaling of the equation
is {\em Garabedian's pointed bubble solution} with water above
air. Thus at stagnation points on the axis of
symmetry with no water above the stagnation point,
the invariant scaling of the equation cannot be the right
scaling. A fine analysis of the blow-up velocity
yields that in the case that the surface is described
by an injective curve, the velocity scales almost like
$\sqrt{X^2+Y^2+Z^2}$ and is asymptotically given by the velocity field
$$V(\sqrt{X^2+Y^2},Z)=c (-\sqrt{X^2+Y^2},
2Z)$$
with a nonzero constant $c$.

The last result relies on a frequency formula
in combination with a concentration compactness result
for the axially symmetric Euler equations by J.-M. Delort.
While the concentration compactness result alone does
{\em not} lead to strong convergence in general, we prove
the convergence to be strong in our application.
\end{abstract}

\section{Introduction}
Consider the steady axisymmetric Euler equations for a fluid (incompressible and with
zero vorticity) with a free surface acted on only by gravity.
Using cylindrical coordinates and the Stokes stream function $\psi$
(see for example \cite[Exercise 4.18 (ii)]{fraenkel}),
we obtain the free boundary problem
\begin{align} \label{intro_sol}
&\div \left(\frac{1}{x_1} \nabla \psi(x_1,x_2)\right)=0 \textrm{ in the water phase } \{ \psi>0\}\\
&\frac{1}{x_1^2}{\vert \nabla \psi(x_1,x_2)\vert}^2
=  - x_2  \text{ on the free surface } \partial
\{\psi >0\};\non\end{align}
here the original velocity field
$$V(X,Y,Z) = \left(-{1\over {x_1}} \partial_2 \psi \cos \vartheta,
-{1\over {x_1}} \partial_2 \psi \sin \vartheta ,
{1\over {x_1}} \partial_1 \psi\right),$$
where $(X,Y,Z)=(x_1\cos\vartheta, x_1\sin\vartheta, x_2)$.

Observe that the positive sign of $\psi$ is chosen just for convenience and
that replacing $\psi$ by $-\psi$ our analysis covers the case of negative
$\psi$ as well.

Note also that the equations above describe
apart from a model, where the fluid is pumped in or sucked out
at a fixed boundary, also the case of a traveling wave traveling in the
direction of the axis of symmetry; here the equations describe the steady flow in the moving
frame, so that the
original velocity field is $$V(X,Y,Z, t) =\widetilde V(X,Y,Z-c_0t)+(0,0,c_0),$$ where $c_0$ is the speed of the traveling wave and
$$\widetilde V(X,Y,Z)=\left(-{1\over {x_1}}\partial_2 \psi \cos \vartheta,
-{1\over {x_1}} \partial_2 \psi\sin \vartheta,
{1\over {x_1}} \partial_1 \psi\right).$$
\cite{S-rev} and \cite{Sw1}, \cite{Sw2} are excellent reviews on two-dimensional water waves.

The free boundary problem (\ref{intro_sol}) has been studied in \cite{acf}
where regularity away from the degenerate sets $\{x_1=0\}$ (the axis of symmetry)
and $\{ x_2=0\}$ (containing all stagnation points)
has been shown for minimizers of a certain energy.

In the present paper we will focus on precisely those two sets and
analyze the profile of the velocity vector field close to points in those
sets.

Due to the degeneracy of the free boundary condition
$\vert \nabla \psi(x_1,x_2)\vert^2
=   {x_1}^2 x_2$ at points $x^0=(x_1^0,x_2^0)$ with $x_1^0 x_2^0=0$, we obtain {\em four}
invariant scalings
\begin{align*}
&{\psi(x^0+rx)\over r} \textrm{ in the case } x_1^0\ne 0 \textrm{ and } x_2^0\ne 0,\\
&{\psi(x^0+rx)\over {r^{3\over 2}}} \textrm{ in the case } x_1^0\ne 0 \textrm{ and } x_2^0=0,\\
&{\psi(x^0+rx)\over {r^{2}}} \textrm{ in the case } x_1^0=0 \textrm{ and } x_2^0\ne 0,\\
&{\psi(x^0+rx)\over {r^{5 \over 2}}} \textrm{ in the case } x_1^0=x_2^0=0.
\end{align*}
Note that the velocity (in the moving frame) would scale like $1,|x|^{{1\over 2}},1,|x|^{1\over 2}$
in the respective cases.

In a first main result we determine the profile
of the scaled solution as $r\to 0$ (Proposition \ref{2dim}):
In the case $x_1^0\ne 0$ and $x_2^0\ne 0$ the only asymptotics possible
is constant velocity flow parallel to the free surface.
In the case $x_1^0\ne 0$ and $x_2^0=0$ the only asymptotics possible
is the well-known Stokes corner flow (see \cite{toland}, \cite{plotnikov}, \cite{english}, \cite{VW}). Due to the perturbed equation the situation is actually not unlike
the two-dimensional problem in the presence of vorticity (see
\cite{VW2}, \cite{CS}, \cite{CS2}, \cite{CS3} for two-dimensional results in the
presence of vorticity).
In the case $x_1^0=0$ and $x_2^0\ne 0$ the only asymptotics possible
is constant velocity flow in the gravity direction. This suggests the {\em possibility
of air cusps} pointing in the gravity direction (Figure \ref{fig9}).

\begin{minipage}{\textwidth}
\begin{center}
\begin{picture}(0,0)%
\includegraphics{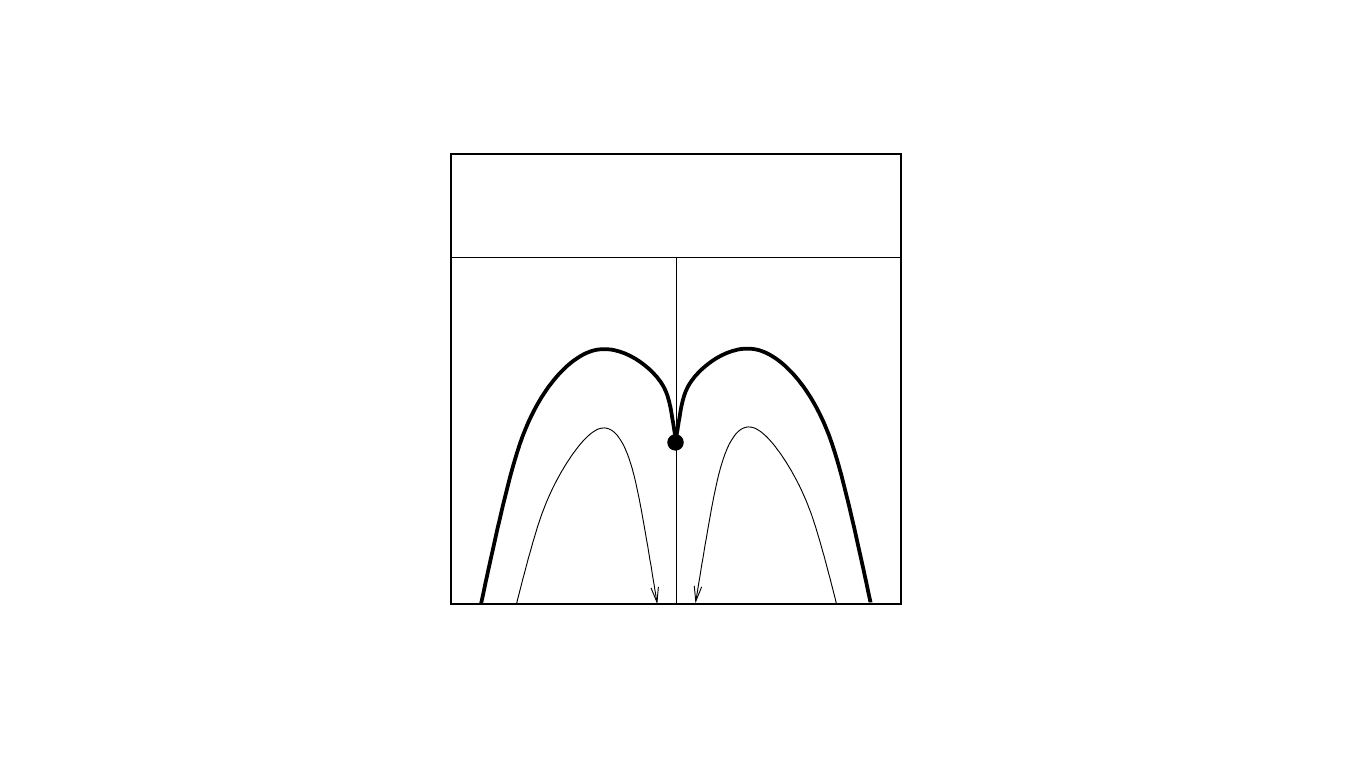}%
\end{picture}%
\setlength{\unitlength}{2368sp}%
\begingroup\makeatletter\ifx\SetFigFont\undefined%
\gdef\SetFigFont#1#2#3#4#5{%
  \reset@font\fontsize{#1}{#2pt}%
  \fontfamily{#3}\fontseries{#4}\fontshape{#5}%
  \selectfont}%
\fi\endgroup%
\begin{picture}(10834,6044)(-2411,-5183)
\put(2720,-1100){\makebox(0,0)[lb]{\smash{{\SetFigFont{7}{8.4}{\rmdefault}{\mddefault}{\updefault}{\color[rgb]{0,0,0}$(0,0)$}%
}}}}
\end{picture}%

\end{center}
\captionof{figure}{Dynamics suggested by our analysis}\label{fig9}
\end{minipage}

In the case $x_1^0=x_2^0=0$ the only asymptotics possible
is the {\em Garabedian pointed bubble solution} with water above
air (cf. \cite{garabedian}, Figure \ref{fig11}). This comes at first as a surprise
as it means that there is no nontrivial asymptotic profile at all
with air above water and with the invariant
scaling. However there remains at this stage the
possibility that the solution has a higher growth than that suggested
by the invariant scaling.

\begin{figure}
\begin{center}
\input{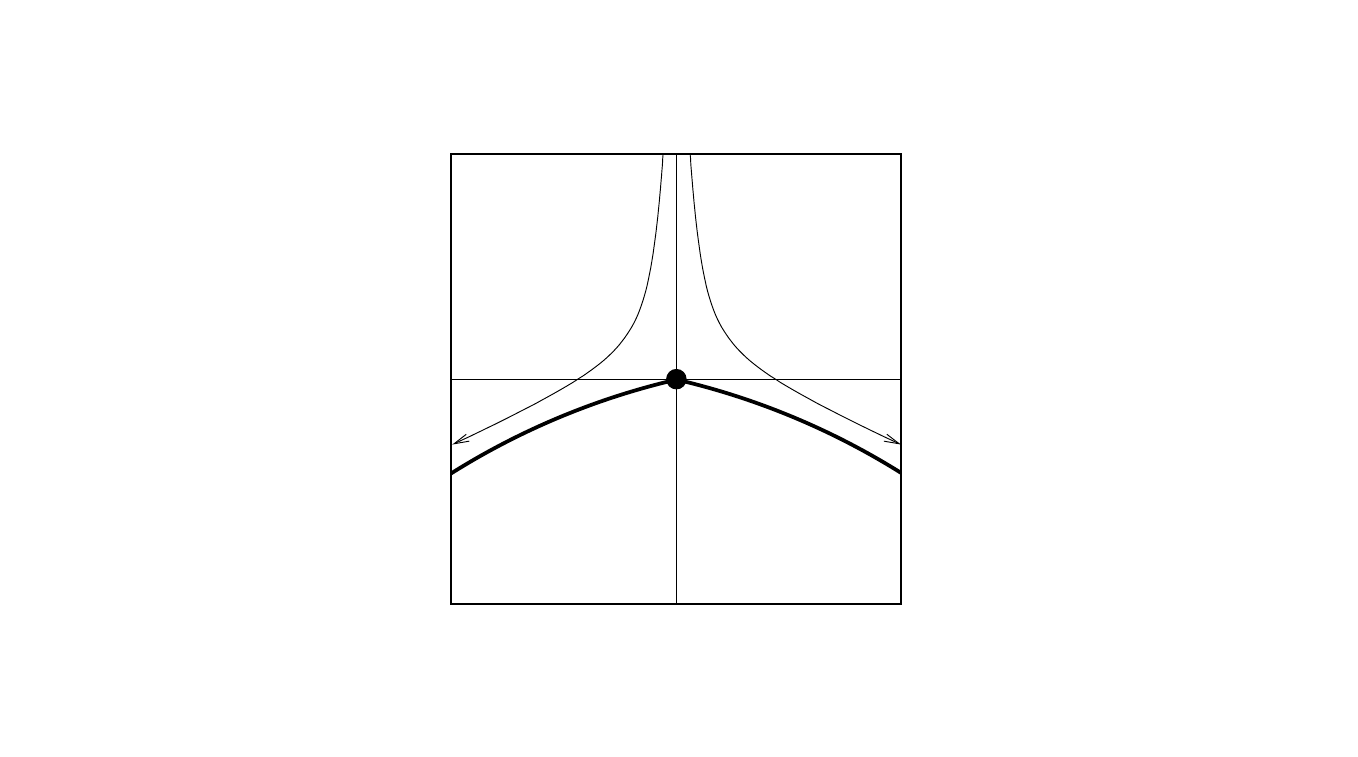_t}
\end{center}
\captionof{figure}{Garabedian pointed bubble asymptotics}\label{fig11}
\end{figure}
In Theorem \ref{curve} we first analyze the possible shapes of
the surface close to stagnation points and close to points on the axis of symmetry.
Assuming that the surface is given by an injective curve
and assuming also a strict Bernstein inequality (corresponding to a Rayleigh-Taylor
condition) we obtain the following result:

In the case $x_1^0\ne 0$ and $x_2^0=0$ the only asymptotics possible
are the well-known Stokes corner (an angle of opening $120^\circ$ in the
direction of the axis of symmetry), and a horizontal point.

In the case $x_1^0=0$ and $x_2^0<0$ the only asymptotics possible
are cusps in the direction of the axis of symmetry.

In the case $x_1^0=x_2^0=0$ the only asymptotics possible
are the {\em Garabedian pointed bubble asymptotics} (an angle of opening
$\approx 114.799^\circ$ with water above air), and a {\em horizontal point}.

A fine analysis of the velocity profile in the last case ($x_1^0=x_2^0=0$
and a horizontal point) is no mean feat, and we confine ourselves to the
case of air above water. Here we prove (Theorem \ref{deg2d}) that
the velocity scales almost like
$\sqrt{X^2+Y^2+Z^2}$ and is asymptotically given by the velocity field
$$V(\sqrt{X^2+Y^2},Z)=c (-\sqrt{X^2+Y^2},
2Z),$$
where $c$ is a nonzero constant (Figure \ref{fig10}).

\begin{minipage}{\textwidth}
\begin{center}
\begin{picture}(0,0)%
\includegraphics{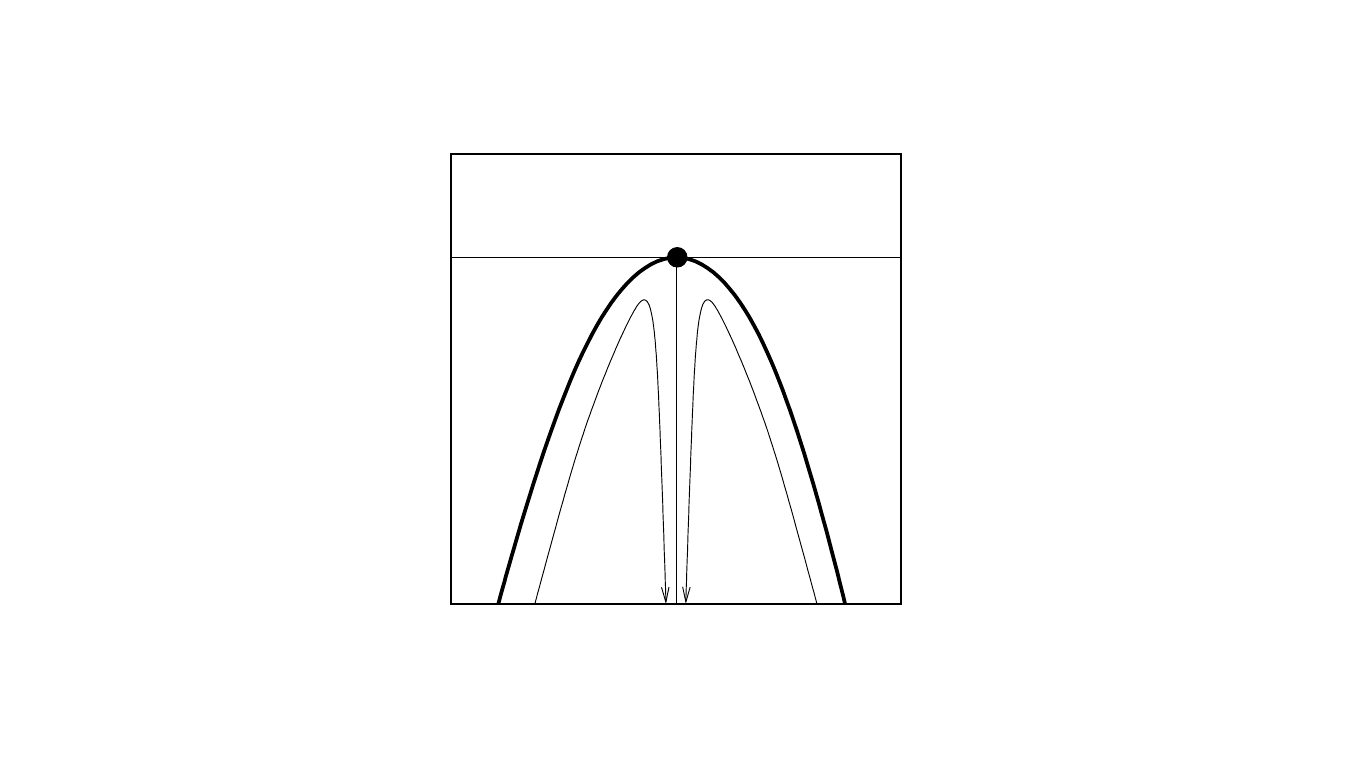}%
\end{picture}%
\setlength{\unitlength}{2368sp}%
\begingroup\makeatletter\ifx\SetFigFont\undefined%
\gdef\SetFigFont#1#2#3#4#5{%
  \reset@font\fontsize{#1}{#2pt}%
  \fontfamily{#3}\fontseries{#4}\fontshape{#5}%
  \selectfont}%
\fi\endgroup%
\begin{picture}(10834,6044)(-2411,-5183)
\put(2722,-1054){\makebox(0,0)[lb]{\smash{{\SetFigFont{7}{8.4}{\rmdefault}{\mddefault}{\updefault}{\color[rgb]{0,0,0}$(0,0)$}%
}}}}
\end{picture}%

\end{center}
\captionof{figure}{Dynamics suggested by our analysis}\label{fig10}
\end{minipage}

The proofs rely on a monotonicity formula as well as a {\em frequency formula}
for the axisymmetric problem;
as remarked in \cite{VW}, it is for certain semilinear problems possible
to derive {\em on the set of highest density} not a perturbation of Almgren's frequency formula
(see \cite{almgren}, \cite{lin2}, \cite{lin}, \cite{arshak}),
but a true nonlinear frequency formula. Here we extend the formula of \cite{VW} to the axisymmetric case.
In combination with a concentration compactness result
for the axially symmetric Euler equations by J.-M. Delort \cite{delort},
this leads to the already mentioned profile for the velocity vector field.
Note that
while the concentration compactness result alone does
{\em not} lead to strong convergence in general, we prove
the convergence to the limiting velocity vector field
to be strong in our application.
\section{Notation}
\label{notation}
We will use coordinates $(X,Y,Z)$ in the physical space
$\R^3$ together with partial derivatives $\partial_X,\partial_Y,\partial_Z$
as well as two-dimensional coordinates
$x=(x_1,x_2)$ together with partial derivatives $\partial_1,\partial_2$.
Sometimes we are going to to use cylindrical coordinates
$(X,Y,Z)=(x_1 \cos \vartheta, x_1 \sin \vartheta, x_2)$. We denote by $x\cdot y$ the Euclidean inner
product in $\R^n \times \R^n$, by $\vert x \vert$ the Euclidean norm
in $\R^n$, by $B_r(x^0):= \{x \in \R^n : \vert x-x^0 \vert < r\}$
the ball of center $x^0$ and radius $r$,
by $B^+_r(x^0):= \{x \in \R^n : x_1>0 \textrm{ and }\vert x-x^0 \vert < r\}$,
by $\partial B^+_r(x^0):= \{x \in \R^n : x_1>0 \textrm{ and }\vert x-x^0 \vert =r\}$
and $\R^n_+ := \{ (x_1,\dots,x_n) : x_1>0\}$ the positive parts.
\emph{Note that $\partial B^+_r(x^0)$ is not the topological boundary
of $B^+_r(x^0)$ and that $B^+_r(x^0)$ is not necessarily a half ball.}

We will use the notation
$B_r$ for $B_r(0)$ as well as $B^+_r$ for $B^+_r(0)$, and denote by $\omega_2$ the $2$-dimensional
volume of $B_1$.

We will use the weighted $L^p$ space
$$ L^p_w(\R^2_+):= \{ v \textrm{ measurable} :  \int_{\R^2_+} \we |v|^p\,dx <+\infty\}$$
with norm $\Vert f\Vert_{L^p_w(\R^2_+)} =  \left(\int_{\R^2_+} \we |v|^p\,dx\right)^{1\over p}$,
the weighted
Sobolev space
\begin{align*}W^{1,p}_w(\R^2_+):= &\{ v \in L^p_w(\R^2_+) : \textrm{ all weak partial derivatives of } v
\\ &\textrm{ are contained in } L^p_w(\R^2_+)\}\end{align*}
as well as the local spaces $$L^p_{w,loc}(\R^2_+)
:= \{ v \textrm{ measurable} :  \int_{B_R^+} \we |v|^p\,dx <+\infty
\textrm{ for each } R\in (0,+\infty)\}$$
and
\begin{align*}W^{1,p}_{w,loc}(\R^2_+)
:= & \{ v \textrm{ measurable} :  v \in L^p_{w,loc}(\R^2_+)
\textrm{ and all weak partial derivatives}\\ &\textrm{of } v
\textrm{ are contained in } L^p_w(B_R^+)
\textrm{ for each } R\in (0,+\infty)\}.\end{align*}

We denote by $\chi_A$ the characteristic function
of a set $A$. For any real number $a$, the notation $a^+$
stands for $\max(a, 0)$ and $a^-$
stands for $\min(a, 0)$.  Also, ${\mathcal L}^n$ shall denote the
$n$-dimensional Lebesgue measure and ${\mathcal H}^s$ the
$s$-dimensional Hausdorff measure. By $\nu$ we will always refer to
the outer normal on a given surface. We will use functions of
bounded variation $BV(U)$, i.e. functions $f\in L^1(U)$ for which
the distributional derivative is a vector-valued Radon measure. Here
$|\nabla f|$ denotes the total variation measure.
Note that for a smooth open set $E\subset \R^2$, $|\nabla \chi_E|$
coincides with the surface measure on $\partial E$.
We will also use the reduced boundary $\partial_{red} E$.

\section{Notion of solution and monotonicity formula}\label{notion}

Let
$\Omega$ be a bounded domain contained in $\{ (x_1,x_2): x_1\ge 0\}$, in which to consider
the combined problem for fluid and air. We study solutions $u$, in a
sense to be specified, of the problem
\begin{align} \label{strongp}
\div (\frac{1}{x_1} \nabla u)=
 \frac{\partial}{\partial x_1}\left(\we \frac{\partial u}{\partial x_1}\right)+\frac{\partial}{\partial x_2}\left(\we \frac{\partial u}{\partial x_2}\right) &=0 \quad\text{in } \Omega \cap \{u>0\},\\
\frac{1}{x_1^2}{\vert \nabla u\vert}^2
  &=   x_2  \quad\text{on } \Omega\cap\partial
\{u>0\}.\non\end{align} Note that, compared to the Introduction, we
have switched notation from $\psi$ to $u$, and we have ``reflected''
the problem at the hyperplane $\{x_2=0\}$. Since our results are
completely local, we do not specify boundary conditions on
$\partial\Omega$.

We begin by introducing our notion of a {\em variational solution}
of problem (\ref{strongp}).
\begin{definition}[Variational Solution]
\label{vardef} We define $u\in W^{1,2}_{w,\textnormal{loc}}(\Omega)$
to be a {\em variational solution} of (\ref{strongp}) if $u\in
C^0(\Omega)\cap C^2(\Omega \cap \{ u>0\})$, $\nabla u/x_1\in
C^1(\Omega \cap \{ u>0\})$, $u=0$ on $\{ x_1=0\}$ (motivated by
the fact that the velocity on the axis orthogonal to the axis direction should be zero),
$u\geq 0$ in $\Omega$,
 and the first variation with respect to
domain variations of the functional
\[ J(v) :=  \int_\Omega \left(\we {\vert \nabla v \vert}^2 +
x_1 x_2  \chi_{\{ v>0\}} \right)\,dx\] vanishes at $v=u  ,$ i.e.
\begin{align} 0 &= -\frac{d}{d\epsilon} J(u(x+\epsilon
\phi(x)))|_{\epsilon=0}  \non\\&= \int_\Omega \Bigg[ \left(\we{\vert \nabla u
\vert}^2 +x_1x_2 \chi_{\{ u>0\}}\right)\div\phi-2\we \nabla u D\phi\nabla u \non\\&\qquad+ \left( -\frac{1}{x_1^2}\vert\nabla u\vert^2+ x_2\chi_{\{
u>0\}}\right)\phi_1 +x_1\chi_{\{ u>0\}} \phi_2\Bigg]\,dx\non\end{align} for any
$\phi=(\phi_1,\phi_2) \in C^1_0(\Omega;\R^2) \textrm{ such that } \phi_1=0 \textrm{ on } \{ x_1=0\}.$
\end{definition}
A proof of the just mentioned first variation formula
can be found in \cite[Section 3.2]{GH}. An integration by parts shows that
$u$ satisfies on smooth parts of free boundary $\partial\{u>0\}$ in $\{ x_1x_2\ne 0\}$
the free boundary condition
$$\frac{1}{x_1}{\vert \nabla u\vert}^2
  =   x_1x_2.$$

\begin{theorem}[Monotonicity Formula]\label{elmon2}
Let $u$ be a variational solution of (\ref{strongp}), let $x^0\in\Om$ and let
$\delta:=\textnormal{dist}(x^0,\partial\Om)/2$.
Let, for any $r\in (0,\delta)$,
\begin{align} &I(r)=\int_{B^+_r(x^0)}\left(\we|\nabla u|^2+x_1x_2\chi_{\{u>0\}}\right)\,dx,\label{I}\\
 &J(r)=\int_{\partial B^+_r(x^0)}\we u^2\dh,\label{J}\\
&M^{int}(r)=r^{-2}I(r)- r^{-3}J(r),\label{Mint}\\
&M^{x_2}(r)=r^{-3}I(r)- \frac{3}{2} r^{-4}J(r),\label{Mx2}\\
&M^{x_1}(r)=r^{-3}I(r)- 2 r^{-4}J(r),\label{Mx1}\\
&M^{x_1x_2}(r)=r^{-4}I(r)- \frac{5}{2} r^{-5}J(r).\label{Mx1x2}
\end{align}
Then, for a.e. $r\in (0,\delta)$,
\begin{align}
\label{Mprint}&(M^{int}(r))'
= 2r^{-2}\int_{\partial B^+_r(x^0)}\we\left(\nabla u\cdot \nu-\frac{u}{r}\right)^2\dh\\ &\quad\quad
+ r^{-3}\int_{B^+_r(x^0)}\left( -\frac{x_1-x^0_1}{x_1^2}|\nabla u|^2 + \left[
(x_1-x^0_1)x_2 + (x_2-x^0_2)x_1\right]\chi_{\{
u>0\}}\right) \,dx\non \\ &\quad\quad
+ r^{-4} \int_{\partial B^+_r(x^0)} \frac{x_1-x^0_1}{(x_1)^2} u^2    \dh.\non
\intertext{In the case $x^0_2=0$,}
\label{Mprx2}&(M^{x_2}(r))'
= 2r^{-3}\int_{\partial B^+_r(x^0)}\we\left(\nabla u\cdot \nu-\frac{3}{2}\frac{u}{r}\right)^2\dh\\ &\quad\quad\non
+ r^{-4} \int_{B^+_r(x^0)}\left( -\frac{x_1-x^0_1}{x_1^2}|\nabla u|^2 +
(x_1-x^0_1)x_2 \chi_{\{
u>0\}}\right) \,dx\non\\ &
\quad\quad + \frac{3}{2}r^{-5} \int_{\partial B^+_r(x^0)} \frac{x_1-x^0_1}{(x_1)^2} u^2    \dh\non.
\intertext{In the case $x^0_1=0$,}
\label{Mprx1}&(M^{x_1}(r))'
= 2r^{-3}\int_{\partial B^+_r(x^0)}\we\left(\nabla u\cdot \nu-2\frac{u}{r}\right)^2\dh\\ &\quad\quad\non
+ r^{-4}\int_{B^+_r(x^0)}
(x_2-x^0_2)x_1 \chi_{\{
u>0\}} \,dx.
\intertext{Last, in the case $x^0_1=x^0_2=0$,}
\label{Mprx1x2}&(M^{x_1x_2}(r))'
=2r^{-4}\ipbrx\we\left(\nabla u\cdot \nu-\frac{5}{2}\frac{u}{r}\right)^2\dh.
\end{align}
\end{theorem}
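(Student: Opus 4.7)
The proof rests on three ingredients: a Pohozaev-type scaling identity obtained from the first variation in Definition \ref{vardef}, an energy identity obtained by integrating the PDE against $u$, and direct computations of $I'(r)$ and $J'(r)$. Once these are in hand, each of the four monotonicity formulas reduces to linear algebra, distinguished only by the homogeneity exponents.

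For the scaling identity, I would insert into the first variation the field $\phi_\veps(x)=\eta_\veps(|x-x^0|)(x-x^0)$, where $\eta_\veps$ smoothly approximates $\chi_{[0,r)}$. The admissibility condition $\phi_{\veps,1}=0$ on $\{x_1=0\}$ is satisfied either because $\eta_\veps$ is supported away from the axis (when $x_1^0>0$ and $r<x_1^0$) or because $\phi_{\veps,1}(x)=x_1\eta_\veps(|x|)$ vanishes automatically (when $x_1^0=0$). Using $\div\phi_\veps=2\eta_\veps+|x-x^0|\eta_\veps'(|x-x^0|)$ and $\nabla u\cdot D\phi_\veps\nabla u=\eta_\veps|\nabla u|^2+\eta_\veps'(|x-x^0|)|x-x^0|^{-1}(\nabla u\cdot(x-x^0))^2$, and passing to the limit $\veps\to 0$ via Lebesgue differentiation (valid for a.e.\ $r\in(0,\delta)$), one obtains
\begin{align*}
r\!\int_{\partial B_r^+(x^0)}\!\!(\we|\nabla u|^2+x_1x_2\chi_{\{u>0\}})\dh
&= 2r\!\int_{\partial B_r^+(x^0)}\!\!\we(\nabla u\cdot\nu)^2\dh
+ 2\!\int_{B_r^+(x^0)}\!\!x_1x_2\chi_{\{u>0\}}\,dx + E(r),
\end{align*}
with $E(r):=\int_{B_r^+(x^0)}[-\tfrac{x_1-x_1^0}{x_1^2}|\nabla u|^2+((x_1{-}x_1^0)x_2+(x_2{-}x_2^0)x_1)\chi_{\{u>0\}}]\,dx$. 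Multiplying $\div(\we\nabla u)=0$ by $u$ and integrating by parts over $B_r^+(x^0)\cap\{u>0\}$, using $u=0$ on $\partial\{u>0\}$ and on $\{x_1=0\}$ together with $\nabla u=0$ a.e.\ on $\{u=0\}$, yields the energy identity
\[\int_{B_r^+(x^0)}\we|\nabla u|^2\,dx = \int_{\partial B_r^+(x^0)}\we\,u\,\nabla u\cdot\nu\,\dh.\]
Direct differentiation gives $I'(r)=\int_{\partial B_r^+(x^0)}(\we|\nabla u|^2+x_1x_2\chi_{\{u>0\}})\dh$, and the parametrization $x=x^0+r\omega$ gives $J'(r)=\tfrac{1}{r}J(r)+2\!\int_{\partial B_r^+(x^0)}\we u\nabla u\cdot\nu\dh-\tfrac{1}{r}\!\int_{\partial B_r^+(x^0)}\tfrac{x_1-x_1^0}{x_1^2}u^2\dh$.

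Each monotonicity identity is then bookkeeping with $M=r^{-\alpha}I-\kappa r^{-\alpha-1}J$ and $(\kappa,\alpha)\in\{(1,2),(3/2,3),(2,3),(5/2,4)\}$: expand $(M)'$, use the Pohozaev identity to replace $r^{-\alpha}I'(r)$, and use the energy identity to trade the surviving $I$-terms against $\int\we u\nabla u\cdot\nu\,\dh+\int x_1x_2\chi_{\{u>0\}}$. The cross-terms involving $\int\we u\nabla u\cdot\nu\,\dh$ and $J(r)$ then assemble exactly into the perfect square $2r^{-\alpha}\int\we(\nabla u\cdot\nu-\kappa u/r)^2\dh$. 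When $x_1^0=0$ the quotient $(x_1-x_1^0)/x_1^2$ equals $\we$, so the last term of $J'(r)$ becomes $-\tfrac{1}{r}J(r)$ and cancels, removing the boundary $u^2$-correction from the final formula. In the fully degenerate case $x_1^0=x_2^0=0$ the further identity $I(r)+E(r)=3\!\int_{B_r^+}\!x_1x_2\chi_{\{u>0\}}\,dx$ eliminates all remaining bulk contributions, producing the pure square identity (\ref{Mprx1x2}). The intermediate cases admit only partial cancellations: in (\ref{Mprx2}), $E(r)-\int x_1x_2\chi$ reduces to $\int[-\tfrac{x_1-x_1^0}{x_1^2}|\nabla u|^2+(x_1-x_1^0)x_2\chi_{\{u>0\}}]$ once $x_2^0=0$, and analogously for (\ref{Mprx1}).

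The main technical delicacy is the integration by parts for the energy identity across the axis $\{x_1=0\}$, where the weight $\we$ is singular. I would resolve it by first integrating over $B_r^+(x^0)\cap\{x_1>\veps\}$ and passing to the limit $\veps\to 0$, using $u=0$ on $\{x_1=0\}$ together with the local integrability built into $u\in W^{1,2}_{w,loc}$; the analogous contribution on $\partial\{u>0\}$ is trivial since $u$ vanishes there. Once this is secured, the remainder is pure algebra plus Lebesgue differentiation for a.e.\ $r$.
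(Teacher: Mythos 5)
Your proposal is correct and follows essentially the same route as the paper: the domain variation with the truncated radial field $\eta(|x-x^0|)(x-x^0)$, the integration-by-parts identity $\int_{B_r^+}\we|\nabla u|^2=\int_{\partial B_r^+}\we\,u\,\nabla u\cdot\nu$, the differentiation of $r^{\alpha}J(r)$, and the final bookkeeping (including the cancellation of the boundary $u^2$-term when $x_1^0=0$ and the identity $I+E=3\int x_1x_2\chi_{\{u>0\}}$ in the fully degenerate case) all match the paper's argument. The only minor divergence is in justifying the energy identity: the paper regularizes at the free boundary via the test function $\max(u-\epsilon,0)^{1+\epsilon}$ (since $\nabla u$ is only assumed smooth inside $\{u>0\}$), whereas you regularize at the axis; both issues should be addressed, but this is a routine technical point.
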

\begin{remark}
\label{elrem}
(i) The integrand in the first
integral on the
right-hand side of (\ref{Mprint}) is a scalar multiple of
$ (\nabla u(x) \cdot (x-x^0) - u(x))^2$, and therefore vanishes
if and only if $u$ is a homogeneous function of degree $1$ with respect to $x^0$.
\\
(ii)
The integrand in the first
integral on the
right-hand side of (\ref{Mprx2}) is a scalar multiple of
$ (\nabla u(x) \cdot (x-x^0) - \frac{3}{2}u(x))^2$, and therefore vanishes
if and only if $u$ is a homogeneous function of degree $3/2$ with respect to $x^0$.
\\
(iii)
The integrand in the first
integral on the
right-hand side of (\ref{Mprx2}) is a scalar multiple of
$ (\nabla u(x) \cdot (x-x^0) - 2u(x))^2$, and therefore vanishes
if and only if $u$ is a homogeneous function of degree $2$ with respect to $x^0$.
\\
(iv)
The integrand in the first
integral on the
right-hand side of (\ref{Mprx2}) is a scalar multiple of
$ (\nabla u(x) \cdot x - \frac{5}{2}u(x))^2$, and therefore vanishes
if and only if $u$ is a homogeneous function of degree $5/2$.
\end{remark}
\begin{proof}
First,
for each $u\in W^{1,2}_{w,loc}(\R^2)$, each $\alpha \in\R$ and
a.e. $r\in(0,\delta)$ we obtain, setting $w_r(x):= u(x^0+rx)$,
\begin{align}
& \frac{d}{dr} \left(r^\alpha \int_{\partial B^+_r(x^0)} \we u^2
\dh\right)
 = \frac{d}{{dr}} \left(r^{\alpha+n-1}
\int_{\partial B^+_1} \frac{1}{x^0_1+rx_1}w_r^2    \dh\right)\label{easy}\\ \non &=
(\alpha+n-1) r^{\alpha-1} \int_{\partial B^+_r(x^0)} \we u^2    \dh
- r^{\alpha+n-1}
\int_{\partial B^+_1} \frac{x_1}{(x^0_1+rx_1)^2}w_r^2    \dh\\ \non &\quad
+ r^{\alpha+n-1} \int_{\partial B^+_1} \frac{2}{x^0_1+rx_1}w_r\nabla u(x^0+rx)\cdot x   \dh\\ \non
&= (\alpha+n-1) r^{\alpha-1} \int_{\partial B^+_r(x^0)} \we u^2    \dh -
r^{\alpha-1} \int_{\partial B^+_r(x^0)} \frac{x_1-x^0_1}{(x_1)^2} u^2    \dh\\ &\quad
+r^{\alpha} \int_{\partial B^+_r(x^0)} \frac{2}{x_1}u\nabla u\cdot \nu    \dh.\non
\end{align}

Suppose now that $u$ is a variational solution of (\ref{strongp}).
For small positive $\kappa$ and $\eta_\kappa(t) :=
\max(0,\min(1,\frac{r- t}{ \kappa}))$, we take after approximation
$\phi_\kappa(x) := \eta_\kappa(|x-x^0|)(x-x^0) $ as a test function
in the definition of a variational solution, obtaining
\begin{align}\non 0 =& \int_{\Om}
\left(\we |\nabla u |^2+x_1x_2\chi_{\{ u>0\}}\right)\left(2\eta_\kappa(|x-x^0|)+\eta_\kappa'(|x-x^0|)|x-x^0|\right)\,dx\\ \non
&-\int_{\Om}\frac{2}{x_1}\bigg[(\partial_1 u)^2\left(\eta_\kappa(|x-x^0|)+\eta_\kappa'(|x-x^0|)\frac{(x_1-x^0_1)^2}{|x-x^0|}\right)\\ \non
&+(\partial_2 u)^2\left(\eta_\kappa(|x-x^0|)+\eta_\kappa'(|x-x^0|)\frac{(x_2-x^0_2)^2}{|x-x^0|}\right)\non\\&\qquad\qquad\qquad +(\partial_1 u)(\partial_2 u)2\eta'(|x-x^0|)\frac{(x_1-x^0_1)(x_2-x^0_2)}{|x-x^0|}\bigg]\,dx\non\\& \non
+\int_{\Om}\bigg(-\frac{x_1-x^0_1}{x_1^2}|\nabla u|^2\eta_k(|x-x^0|)+\big[
(x_1-x^0_1)x_2 \\ &\qquad\qquad\qquad + (x_2-x^0_2)x_1\big]\chi_{\{
u>0\}}\eta_\kappa(|x-x^0|)\bigg)\,dx.\non\end{align}
Passing to the limit as $\kappa\to 0$, we
obtain for a.e. $r \in (0,\delta)$,
\begin{align}\label{mff2} 0=& 2\int_{B^+_r(x^0)} x_1x_2  \chi_{\{ u>0\}}
\,dx - r \int_{\partial B^+_r(x^0)}
 \left(\we {\vert \nabla u \vert}^2 + x_1x_2  \chi_{\{ u>0\}} \right)\dh\\ &\quad
+ 2r\int_{\partial B^+_r(x^0)}
\we (\nabla u \cdot \nu)^2\,\dh\non
\\ &
+ \int_{B^+_r(x^0)}\bigg( -\frac{x_1-x^0_1}{x_1^2}|\nabla u|^2 + \big[
(x_1-x^0_1)x_2 + (x_2-x^0_2)x_1\big]\chi_{\{
u>0\}}\bigg) \,dx.\non
\end{align}

Observe that letting $\epsilon\to 0$ in \[\int_{B^+_r(x^0)} \we \nabla u
\cdot \nabla \max(u-\epsilon,0)^{1+\epsilon}\,dx =\int_{\partial
B^+_r(x^0)} \we \max(u-\epsilon,0)^{1+\epsilon} \nabla u \cdot \nu\, \dh\]
for a.e. $r \in (0,\delta)$, we obtain the integration by parts
formula
\begin{equation}\label{part2}
 \int_{B^+_r(x^0)} \we {\vert \nabla u \vert}^2\,dx =\int_{\partial B^+_r(x^0)}
\we u \nabla u \cdot \nu\,  \dh \end{equation} for a.e. $r \in
(0,\delta).$

Note that
\begin{align*}
(r^{-2}I(r))'=& -2 r^{-3} \int_{B^+_r(x^0)} \left(\we|\nabla u|^2
+ x_1 x_2 \chi_{\{ u>0\}}\right) \,dx\non\\ &\qquad+r^{-2}  \int_{\partial B^+_r(x^0)}\left(\we|\nabla u|^2 + x_1x_2 \chi_{\{ u>0\}}\right)\dh,
\end{align*}
so that by (\ref{mff2}) and (\ref{part2}),
\begin{align}(r^{-2}I(r))'&
= r^{-3}\Bigg(2r\int_{\partial B^+_r(x^0)}\we(\nabla u\cdot \nu)^2\dh- 2\int_{\partial B^+_r(x^0)}\we u \nabla u\cdot \nu\dh\label{Iprint}\\ &
+ \int_{B^+_r(x^0)}\left( -\frac{x_1-x^0_1}{x_1^2}|\nabla u|^2 + \left[
(x_1-x^0_1)x_2 + (x_2-x^0_2)x_1\right]\chi_{\{
u>0\}}\right) \,dx
\Bigg),\non
\end{align}
Combining (\ref{Iprint}) and (\ref{easy}) with $\alpha=-3$ yields (\ref{Mprint}).

Moreover,
\begin{align}
(r^{-3}I(r))'& =-3 r^{-4} \int_{B^+_r(x^0)} \left(\we|\nabla u|^2
+ x_1 x_2 \chi_{\{ u>0\}}\right) \,dx\label{oldidx1}\\ & \qquad+r^{-3}  \int_{\partial B^+_r(x^0)}\left(\we|\nabla u|^2 + x_1x_2 \chi_{\{ u>0\}}\right)\dh.\non
\end{align}
In the case $x^0_2=0$ we obtain from (\ref{oldidx1}), using (\ref{mff2}) and (\ref{part2}),
that
\begin{align}
(r^{-3}I(r))'&=r^{-4}\Bigg(2r\int_{\partial B^+_r(x^0)}\we(\nabla u\cdot \nu)^2\dh- 3\int_{\partial B^+_r(x^0)}\we u \nabla u\cdot \nu\dh\label{Iprx2}\\ &
+ \int_{B^+_r(x^0)}\left( -\frac{x_1-x^0_1}{x_1^2}|\nabla u|^2 +
(x_1-x^0_1)x_2 \chi_{\{
u>0\}}\right) \,dx
\Bigg),\non
\end{align}
Combining (\ref{Iprx2}) and (\ref{easy}) with $\alpha=-4$ yields (\ref{Mprx2}).
On the other hand, in the case $x^0_1=0$ we obtain from (\ref{oldidx1}), using (\ref{mff2}) and (\ref{part2}),
that
\begin{align}
(r^{-3}I(r))'&=r^{-4}\Bigg(2r\int_{\partial B^+_r(x^0)}\we(\nabla u\cdot \nu)^2\dh- 4\int_{\partial B^+_r(x^0)}\we u \nabla u\cdot \nu\dh\label{Iprx1}\\
& + \int_{B^+_r(x^0)}
(x_2-x^0_2)x_1 \chi_{\{
u>0\}} \,dx
\Bigg),\non
\end{align}
Combining (\ref{Iprx1}) and (\ref{easy}) with $\alpha=-4$ yields (\ref{Mprx1}).

Last,
in the case $x^0_1=x^0_2=0$, since
\begin{align*}
(r^{-4}I(r))' &=-4 r^{-5} \int_{B^+_r(0)} \left(\we|\nabla u|^2 + x_1 x_2 \chi_{\{ u>0\}}\right) \,dx\non\\&\qquad+r^{-4}  \int_{\partial B^+_r(0)}\left(\we|\nabla u|^2 + x_1x_2 \chi_{\{ u>0\}}\right)\dh,\end{align*}
we obtain from (\ref{mff2}) and (\ref{part2}) that
\be(r^{-4}I(r))'= r^{-5}\left(2r\ipbrx\we(\nabla u\cdot \nu)^2\dh- 5\ipbrx\we u (\nabla u\cdot \nu)\dh \right).\label{Iprx1x2}\ee
Combining (\ref{Iprx1x2}) and (\ref{easy}) with $\alpha=-5$ yields (\ref{Mprx1x2}).
\end{proof}

\begin{lemma}[Bernstein estimate]
In $\{ u>0\}$, the solution satisfies
$$\Delta\left(\frac{|\nabla u|^2}{x_1}- x_1 x_2\right)=
2\sum_{i,j=1}^2 \frac{(\partial_{ij} u)^2}{x_1}.$$
\end{lemma}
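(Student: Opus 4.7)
The approach is a direct pointwise computation in the open set $\{u>0\}$, starting from the strong form of the PDE. Expanding the divergence in $\mathrm{div}(\nabla u/x_1)=0$ gives the nonhomogeneous Laplace equation
\[
\Delta u=\frac{\partial_1 u}{x_1}\qquad\text{in }\{u>0\},
\]
which will play the role of harmonicity in a Bochner-type argument.

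First, I would apply the standard Bochner identity
\[
\Delta(|\nabla u|^2)=2\sum_{i,j=1}^{2}(\partial_{ij}u)^{2}+2\,\nabla u\cdot\nabla(\Delta u).
\]
Differentiating the PDE, $\nabla(\Delta u)=\nabla(\partial_1 u/x_1)$ is computed coordinate-wise by the quotient rule, and dotting with $\nabla u$ and using $2\nabla u\cdot\nabla(\partial_1 u)=\partial_1(|\nabla u|^2)$ gives
\[
\nabla u\cdot\nabla(\Delta u)=\frac{1}{2x_1}\,\partial_1(|\nabla u|^{2})-\frac{(\partial_1 u)^{2}}{x_1^{2}}.
\]

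Second, I would apply the product rule for $\Delta(fg)$ with $f=|\nabla u|^{2}$ and $g=1/x_1$, using the elementary formulas $\nabla(1/x_1)=(-1/x_1^{2},0)$ and $\Delta(1/x_1)=2/x_1^{3}$, to expand $\Delta(|\nabla u|^2/x_1)$. Substituting the Bochner expression for $\Delta(|\nabla u|^2)$ produces the main term $2\sum_{i,j}(\partial_{ij}u)^{2}/x_1$ together with several correction terms of the form $\partial_1(|\nabla u|^2)/x_1^{2}$, $(\partial_1 u)^{2}/x_1^{3}$ and $|\nabla u|^{2}/x_1^{3}$. The subtracted term $x_1 x_2$ drops out at once, since $\Delta(x_1 x_2)=0$.

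Finally, I would simplify the correction terms by using the PDE $\Delta u=\partial_1 u/x_1$ once more to rewrite first-derivative combinations, until everything collapses into the clean identity stated. The only obstacle is the careful algebraic bookkeeping of the terms generated by the weighted product rule; the proof is otherwise entirely routine, and its whole point is to display, via the sum of squares $2\sum_{i,j}(\partial_{ij}u)^{2}/x_1\ge 0$, that the auxiliary quantity $|\nabla u|^{2}/x_1-x_1 x_2$ is subharmonic and vanishes on the smooth part of the free boundary, so that it has a definite sign by the maximum principle (this is the Bernstein/Rayleigh--Taylor inequality that is used subsequently in the paper).
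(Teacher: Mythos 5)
Your strategy is exactly the one the paper intends (its proof is literally ``Direct calculation''), and your intermediate formulas are all correct: $\Delta u=\partial_1 u/x_1$ in $\{u>0\}$, the Bochner identity, $\nabla u\cdot\nabla(\Delta u)=\tfrac{1}{2x_1}\partial_1(|\nabla u|^2)-(\partial_1u)^2/x_1^2$, and the weighted product rule with $\nabla(1/x_1)=(-1/x_1^2,0)$, $\Delta(1/x_1)=2/x_1^3$. The gap is in the last, unexecuted step: the correction terms do \emph{not} collapse. Carrying your own bookkeeping to the end (write $f=|\nabla u|^2$, $p=\partial_1u$, $q=\partial_2u$) gives
\begin{equation*}
\Delta\Bigl(\frac{f}{x_1}-x_1x_2\Bigr)=\frac{2}{x_1}\sum_{i,j=1}^2(\partial_{ij}u)^2\;-\;\frac{\partial_1 f}{x_1^2}\;+\;\frac{2q^2}{x_1^3},
\end{equation*}
and the remainder $-\partial_1 f/x_1^2+2q^2/x_1^3$ is not zero for solutions of the interior equation. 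A concrete check: $u=x_1^2$ solves $\div(\nabla u/x_1)=0$ (it is even one of the blow-up profiles in Proposition 3.9(iii)), yet $\Delta(|\nabla u|^2/x_1-x_1x_2)=\Delta(4x_1-x_1x_2)=0$ while $\frac{2}{x_1}\sum(\partial_{ij}u)^2=8/x_1$. Similarly $u=x_1^2x_2$ gives $14x_1$ on the left but $8x_2^2/x_1+16x_1$ on the right. So the cancellation you assert cannot be achieved by further use of the PDE; the identity as displayed needs the extra first- and zeroth-order terms (this appears to be an issue with the statement itself, which your honest completion of the computation would have exposed).

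This also undercuts the concluding sentence of your proposal: with the remainder present, $|\nabla u|^2/x_1-x_1x_2$ is not manifestly a subsolution, since after absorbing $-\partial_1 f/x_1^2$ into a drift term one is left with the non-sign-definite quantity $(q^2-p^2)/x_1^3$. If you want a clean Bernstein-type inequality for this problem, the natural candidate is the Bernoulli quantity $v=|\nabla u|^2/x_1^2-x_2$ matching the free boundary condition $|\nabla u|^2/x_1^2=x_2$: the same computation yields
\begin{equation*}
\Bigl(\Delta+\frac{3}{x_1}\partial_1\Bigr)v=\frac{2}{x_1^2}\Bigl(\sum_{i,j=1}^2(\partial_{ij}u)^2-(\Delta u)^2\Bigr),
\end{equation*}
with no further remainder (and $(\Delta+\tfrac{3}{x_1}\partial_1)x_2=0$), though one must still argue separately that the right-hand side is nonnegative, which is not automatic since $\sum(\partial_{ij}u)^2-(\Delta u)^2=-2\det D^2u$. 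In short: same route as the paper, correct ingredients, but the claimed final cancellation is false and the step ``everything collapses into the clean identity stated'' would fail.
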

\begin{proof}
Direct calculation.
\end{proof}
\begin{remark}
Constructing barrier solutions it is therefore possible to
verify $\frac{|\nabla u|^2}{x_1}- x_1 x_2\le 0$ for certain domains
$\subset \{ x_2>0\}$,
certain Dirichlet boundary data and the {\em minimal solution} $u$
(cf. \cite{ejde}).
\end{remark}
\begin{definition}[Weak Solution]\label{weak}
We define $u\in W^{1,2}_{w,\textnormal{loc}}(\Omega)$ to be a {\em
weak solution} of (\ref{strongp}) if the following are satisfied:
$u$ is a {\em variational solution} of (\ref{strongp})
and
the topological free boundary $\partial \{ u>0\}
\cap \Omega^\circ  \cap \{ x_2 \ne 0\}$ is locally
a
$C^{2,\alpha}$-surface.
\end{definition}
\begin{remark}
(i) It follows that in $\Omega^\circ \cap \{ x_2\ne 0\}$ the solution is a classical solution
of (\ref{strongp}). It follows also that $\partial\{ u>0\}\subset \{ x_2\ge 0\}$.

(ii) For any weak
solution $u$ of {\rm (\ref{strongp})} such that  \[\frac{|\nabla u|^2}{x_1}\le Cx_1 |x_2|\quad\text{locally in }\Omega,\]
$u$ is a variational
solution of {\rm (\ref{strongp})},
$ \chi_{\{ u>0\}}$ is
locally in $\Omega^\circ\cap \{ x_2> 0\}$ a function of bounded variation, and the
total variation measure $|\nabla \chi_{\{ u>0\}}|$ satisfies
\[\int_{B^+_r(x^0)} \sqrt{x^+_2}\,d|\nabla \chi_{\{
u>0\}}|\le C_1 \left\{\begin{array}{l} r^{3\over 2}, x^0_2=0\\
r \sqrt{x^0_2}, x^0_2> 0\end{array}\right.\]
for all $B^+_r(x^0)\subset\subset \Omega$.
The reason is that, integrating by parts,
\begin{align*}0 & = \int_{B^+_r(x^0)\cap \{ u>0\}} \div (\we \nabla u)
\le  \int_{\partial B^+_r(x^0)\cap \{ u>0\}}\frac{|\nabla u|}{x_1}\dh\\&\quad 
+ \int_{B_r(x^0)\cap \{ x_1=0\}} \frac{|\nabla u|}{x_1}\dh
-\int_{B^+_r(x^0)\cap \partial_{\rm red} \{ u>0\}} \frac{|\nabla u|}{x_1}\dh \\&
\le C_1 \left(\int_{\partial B^+_r(x^0)}\sqrt{x^+_2}\dh + \int_{B_r(x^0)\cap \{ x_1=0\}}\sqrt{x^+_2}\dh\right)
\\&\quad
-\int_{B^+_r(x^0)\cap \partial_{\rm red} \{ u>0\}}\sqrt{x^+_2}\dh.
\end{align*}
\end{remark}
\begin{lemma}\label{density_1}
Let $u$ be a variational solution of {\rm (\ref{strongp})} and
suppose that
\[\frac{|\nabla u|^2}{x_1}\le C  x_1 |x_2|\quad\text{locally in }\Omega.\]
Then:

(i) The limit $M^{int}(0+)=\lim_{r\to 0+} M^{int}(r)$ exists and is finite.
If $x^0_2=0$, then the limit $M^{x_2}(0+)=\lim_{r\to 0+} M^{x_2}(r)$ exists and is finite.
If $x^0_1=0$, then the limit $M^{x_1}(0+)=\lim_{r\to 0+} M^{x_1}(r)$ exists and is finite.
If $x^0_1=x^0_2=0$, then the limit $M^{x_1x_2}(0+)=\lim_{r\to 0+} M^{x_1x_2}(r)$ exists and is finite.

(iii)  Let $x^0_1>0$, $x^0_2>0$ and $0<r_m\to 0+$ as $m\to
\infty$ be a sequence such that the {\em blow-up} sequence
\begin{equation}\label{blo1} u_m(x) := {u(x^0+{r_m}x)/ {r_m}}\end{equation} converges weakly in
 $W^{1,2}_{\textnormal{loc}}(\R^2)$ to
a blow-up limit $u_0$. Then $u_0$ is a homogeneous function of
degree $1$, i.e. $u_0(\lambda x)  = \lambda u_0(x)$.

Let $x^0_2=0$ and let $0<r_m\to 0+$ as $m\to
\infty$ be a sequence such that the {\em blow-up} sequence
\begin{equation}\label{blo2}u_m(x) := {u(x^0+{r_m}x)/ {r_m}^{3\over 2}}\end{equation} converges weakly in
 $W^{1,2}_{\textnormal{loc}}(\R^2)$ to
a blow-up limit $u_0$. Then $u_0$ is a homogeneous function of
degree $3/2$.

Let $x^0_1=0$ and let $0<r_m\to 0+$ as $m\to
\infty$ be a sequence such that the {\em blow-up} sequence
\begin{equation}\label{blo3}u_m(x) := {u(x^0+{r_m}x)/ {r_m}^2}\end{equation} converges weakly in
 $W^{1,2}_{w,\textnormal{loc}}(\R^2_+)$ to
a blow-up limit $u_0$. Then $u_0$ is a homogeneous function of
degree $2$.

Let $x^0_1=x^0_2=0$ and let $0<r_m\to 0+$ as $m\to
\infty$ be a sequence such that the {\em blow-up} sequence
\begin{equation}\label{blo4}u_m(x) := {u(x^0+{r_m}x)/ {r_m}^{5\over 2}}\end{equation} converges weakly in
 $W^{1,2}_{w,\textnormal{loc}}(\R^2_+)$ to
a blow-up limit $u_0$. Then $u_0$ is a homogeneous function of
degree $5/2$.

(iii) Let $u_m$ be one of the converging sequences in (ii). Then $u_m$
converges {\em strongly} in $W^{1,2}_{w,\textnormal{loc}}(\R^2_+)$
({\em strongly} in $W^{1,2}_{\textnormal{loc}}(\R^2)$ in the cases where $x^0_1>0$).

(iv) If $x^0_1>0$ and $x^0_2\ne 0$, then
$$M^{int}(0+)=x^0_1 x^0_2 \lim_{r\to 0+} r^{-2}
\int_{B^+_r(x^0)}\chi_{\{ u>0\}}\,dx.$$
Moreover, $M^{int}(0+)=0$ implies that $u_0=0$ in $\R^2$ for
each blow-up limit $u_0$ of $u_m(x) = u(x^0+{r_m}x)/r_m$.

If $x^0_1>0$ and $x^0_2=0$, then
$$M^{x_2}(0+)=x^0_1 \lim_{r\to 0+} r^{-3}
\int_{B^+_r(x^0)}x_2 \chi_{\{ u>0\}}\,dx.$$

If $x^0_1=0$ and $x^0_2\ne 0$, then
$$M^{x_1}(0+)=x^0_2 \lim_{r\to 0+} r^{-3}
\int_{B^+_r(x^0)}x_1 \chi_{\{ u>0\}}\,dx.$$
Moreover, $M^{x_1}(0+)=0$ implies that $u_0=0$ in $\R^2_+$ for
each blow-up limit $u_0$ of $u_m(x) = u(x^0+{r_m}x)/{r_m}^2$.

If $x^0_1=x^0_2=0$, then
$$M^{x_1x_2}(0+)=\lim_{r\to 0+} r^{-4}
\int_{B^+_r(x^0)}x_1 x_2 \chi_{\{ u>0\}}\,dx.$$
\end{lemma}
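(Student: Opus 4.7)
The four parts will be handled together since they share the same mechanism: the monotonicity formulas of Theorem \ref{elmon2} combined with the Bernstein-type bound $|\nabla u|^2/x_1 \le C x_1 |x_2|$. The first step is to check that each of $M^{int}, M^{x_2}, M^{x_1}, M^{x_1x_2}$ has the form (monotone) + (absolutely continuous error that is integrable down to $r=0$). Indeed, the Bernstein bound immediately gives
\[ I(r) \le C\!\int_{B_r^+(x^0)}\!\! x_1|x_2|\,dx, \]
which is $O(r^n)$ with $n=2,3,3,4$ in the four cases, matching the normalization of $M^{int}, M^{x_2}, M^{x_1}, M^{x_1x_2}$. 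In each monotonicity formula the lower order terms carry a factor $(x_1-x^0_1)$, $(x_2-x^0_2)$, or $(x_1-x^0_1)/x_1^2$, all of which gain a factor $r$ relative to the leading scaling; combined with the above estimate on $I(r)$ and the analogous estimate on $r^{-k}J(r)$ obtained from \eqref{part2}, the lower order terms are absolutely integrable in $r$ on $(0,\delta)$. Thus for each case the corresponding $M$ has the form (bounded below monotone) + (absolutely continuous function with finite total variation), so the limit $M(0+)$ exists and is finite, proving (i).

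For (ii), fix one of the four scalings with exponent $\alpha\in\{1,3/2,2,5/2\}$. Writing the monotonicity identity between scales $\rho_m=r_m$ and $\rho_m r$ for a fixed $r\in(0,1)$, the existence of $M(0+)$ forces the integrated squared term
\[ \int_{r_m r}^{r_m}\! 2 s^{-(2\alpha+1)}\!\int_{\partial B_s^+(x^0)}\!\we\bigl(\nabla u\cdot\nu-\alpha u/s\bigr)^2\,d\mathcal H^1\,ds \;\longrightarrow\; 0. \]
After changing variables $s=r_m\rho$ and rewriting in terms of the blow-up sequence $u_m$ (and noting that the weight $1/(x^0_1+r_m x_1)$ or $1/x_1$ is bounded below away from zero on compact subsets of $\R^2$ resp. $\R^2_+$), this says that $\nabla u_m(x)\cdot x - \alpha u_m(x) \to 0$ in $L^2_w$ on every shell $B_1^+\setminus B_r^+$. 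By weak lower semicontinuity the limit $u_0$ satisfies $\nabla u_0\cdot x = \alpha u_0$ a.e., hence is homogeneous of degree $\alpha$.

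For the strong convergence claim (iii), which I expect to be the main obstacle especially in the cases $x^0_1=0$ where the weight $1/x_1$ is singular on the boundary of the domain $\R^2_+$, I would proceed as follows. The integration by parts identity \eqref{part2} applied to $u$ at radius $r_m$, rescaled, yields
\[ \int_{B_1^+}\tfrac{1}{x^0_1+r_m x_1}|\nabla u_m|^2\,dx \;=\; \int_{\partial B_1^+}\tfrac{1}{x^0_1+r_m x_1}\,u_m\,\nabla u_m\!\cdot\!\nu\,d\mathcal H^1, \]
and the analogous identity holds for $u_0$ with the weight $1/x^0_1$ or $1/x_1$. The previous step produces radii $\rho\in(r,1)$ along which $\nabla u_m\cdot\nu\to\alpha u_0/\rho$ strongly on $\partial B_\rho^+$ (via the integrated vanishing of the square), and $u_m\to u_0$ strongly in $L^2_w$ on such spheres by compactness of the trace on these good radii. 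Hence the Dirichlet energies converge, which together with the weak convergence upgrades to strong convergence in $W^{1,2}_{w,\text{loc}}$ on each $B_\rho^+$, and then on $B_1^+$ by taking $\rho\uparrow 1$.

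Finally, (iv) is a computation using the homogeneity obtained in (ii) and the strong convergence obtained in (iii). For instance in the case $x^0_1,x^0_2>0$, passing to the limit $r_m\to 0$ in $r_m^{-2}I(r_m)$ and $r_m^{-3}J(r_m)$ yields
\[ M^{int}(0+) = \tfrac{1}{x^0_1}\!\int_{B_1^+}\!|\nabla u_0|^2 - \tfrac{1}{x^0_1}\!\int_{\partial B_1^+}\!u_0^2 + x^0_1 x^0_2\!\int_{B_1^+}\!\chi_{\{u_0>0\}}, \]
and the first two terms cancel by integrating $\int |\nabla u_0|^2 = \int u_0\nabla u_0\cdot\nu = \int u_0^2$ on $B_1^+$, the last equality using degree-$1$ homogeneity. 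Rescaling the remaining term back gives the claimed density formula. The other three cases are analogous, with the homogeneity degree $\alpha$ and the decomposition of $I(r)$ adjusted accordingly; the vanishing statements $M(0+)=0\Rightarrow u_0\equiv 0$ follow because in each case the remaining nonnegative term forces $\chi_{\{u_0>0\}}=0$ a.e., whence $u_0=0$.
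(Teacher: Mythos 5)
Your treatment of (i), (ii) and (iv) is essentially the paper's own: the Bernstein bound makes the error terms in the monotonicity identities of Theorem \ref{elmon2} bounded (hence integrable on $(0,\delta)$), the rescaled integral form of those identities forces $\nabla u_0\cdot x=\alpha u_0$, and in (iv) the Dirichlet and boundary contributions cancel via $\int_{B_1^+}\we|\nabla u_0|^2\,dx=\alpha\int_{\partial B_1^+}\we u_0^2\dh$, leaving only the $\chi_{\{u>0\}}$ term. Where you genuinely diverge is (iii). The paper obtains convergence of the weighted Dirichlet norm by testing the equation for $u_m$ against an interior cutoff $\eta$ and passing to the limit in $-\int u_m\,\frac{1}{(x^0+r_mx)_1}\nabla u_m\cdot\nabla\eta$ as a product of a (locally uniformly) strongly convergent factor with a weakly convergent one, then identifying the limit using the fact that $u_0$ again solves the equation in $\{u_0>0\}$. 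You instead evaluate the energy through the boundary flux on ``good'' spheres, where the dissipation term of the monotonicity formula forces $\nabla u_m\cdot\nu\to\alpha u_0/\rho$ strongly, and match it against the same quantity for $u_0$ via homogeneity. This is a legitimate alternative, but note that (a) it still requires exactly the ingredient the paper makes explicit --- that $u_0$ inherits the equation in its positivity set from the locally uniform convergence of the $u_m$ --- in order to justify the Green identity for $u_0$; and (b) the good radii exist only for a.e.\ $\rho$ after passing to a subsequence of $m$, so a diagonal selection and exhaustion in $\rho$ is needed, which the paper's interior-test-function route avoids. One small overstatement in your (iv): the implication $M(0+)=0\Rightarrow u_0\equiv 0$ is asserted (and provable by your sign argument) only in the two cases with $x^0_2\neq 0$, where the density integrand has a definite sign near $x^0$; in the cases $x^0_2=0$ the remaining term is \emph{not} nonnegative (compare the admissible value $M^{x_2}(0+)=x^0_1\int_{B_1}x_2^-\,dx$ in Proposition \ref{2dim}), so the phrase ``in each case'' should be dropped.
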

\begin{proof}
(i) follows from the assumption
\[\frac{|\nabla u|^2}{x_1}\le C  x_1 |x_2|\quad\text{locally in }\Omega\]
together with Theorem \ref{elmon2}.

(ii): For each $0<\sigma<\infty$ the sequence $u_m$ is in each case by
assumption bounded in $C^{0,1}(B^+_\sigma)$ (bounded in $C^{0,1}(B_\sigma)$
in the case that $x^0_1>0$).
For any $0<\tau<\sigma<\infty$, we write the identities (\ref{Mprint}), (\ref{Mprx2}), (\ref{Mprint}), (\ref{Mprx1x2}) in integral form as
\begin{align}
&2\int_\tau^\sigma r^{-2}
\int_{\partial B^+_r(x^0)}{1\over {x_1}}
\left(\nabla u\cdot \nu -\frac{u}{r}\right)^2\dh dr\non \\
&=M^{int}(\sigma)-M^{int}(\tau)-\int_\tau^\sigma K^{int}(r)\,dr\label{jhmint}
\textrm{ in the case } x^0_1>0 \textrm{ and } x^0_2> 0,\\
&2\int_\tau^\sigma r^{-3}
\int_{\partial B^+_r(x^0)}{1\over {x_1}}
\left(\nabla u\cdot \nu -\frac{3}{2}\frac{u}{r}\right)^2\dh dr\non \\
&=M^{x_2}(\sigma)-M^{x_2}(\tau)-\int_\tau^\sigma K^{x_2}(r)\,dr\label{jhmx2}
\textrm{ in the case } x^0_1>0 \textrm{ and } x^0_2=0,
\end{align}
\begin{align}
&2\int_\tau^\sigma r^{-3}
\int_{\partial B^+_r(x^0)}{1\over {x_1}}
\left(\nabla u\cdot \nu -2\frac{u}{r}\right)^2\dh dr\non \\
&=M^{x_1}(\sigma)-M^{x_1}(\tau)-\int_\tau^\sigma K^{x_1}(r)\,dr\label{jhmx1}
\textrm{ in the case } x^0_1=0 \textrm{ and } x^0_2> 0,\\
&2\int_\tau^\sigma r^{-4}
\int_{\partial B^+_r(x^0)}{1\over {x_1}}
\left(\nabla u\cdot \nu -\frac{5}{2}\frac{u}{r}\right)^2\dh dr\non \\
&=M^{x_1x_2}(\sigma)-M^{x_1x_2}(\tau)\,dr\label{jhmx1x2}
\textrm{ in the case } x^0_1=x^0_2=0;\end{align}
here $K^{int}, K^{x_2}$ and $K^{x_1}$ are defined by (\ref{jhmint}), (\ref{jhmx2}) and (\ref{jhmx1}),
and they are all integrable.

It follows by rescaling in (\ref{jhmint})-(\ref{jhmx1x2}) that
\begin{align*}
2\int_{B_\sigma(0)\setminus B_\tau(0)}&|x|^{-3}{1\over {x_1}}\left(\nabla u_m(x) \cdot x -
u_m(x)\right)^2\,dx\\&\leq M^{int}(r_m\sigma)-M^{int}(r_m\tau)+\int_{r_m\tau}^{r_m\sigma} |K^{int}(r)|\,dr \to 0\quad\text{as }m\to\infty,\\
&\textrm{ in the case } x^0_1>0 \textrm{ and } x^0_2> 0,\\
2\int_{B_\sigma(0)\setminus B_\tau(0)}&|x|^{-5}{1\over {x_1}}\left(\nabla u_m(x) \cdot x - \frac{3}{2}
u_m(x)\right)^2\,dx\\&\leq M^{x_2}(r_m\sigma)-M^{x_2}(r_m\tau)+\int_{r_m\tau}^{r_m\sigma} |K^{x_2}(r)|\,dr \to 0\quad\text{as }m\to\infty,\\
&\textrm{ in the case } x^0_1>0 \textrm{ and } x^0_2=0,\\
2\int_{B^+_\sigma(0)\setminus B^+_\tau(0)}&|x|^{-5}{1\over {x_1}}\left(\nabla u_m(x) \cdot x - 2
u_m(x)\right)^2\,dx\\&\leq M^{x_1}(r_m\sigma)-M^{x_1}(r_m\tau)+\int_{r_m\tau}^{r_m\sigma} |K^{x_1}(r)|\,dr \to 0\quad\text{as }m\to\infty,\\
&\textrm{ in the case } x^0_1=0 \textrm{ and } x^0_2> 0,\\
2\int_{B^+_\sigma(0)\setminus B^+_\tau(0)}&|x|^{-6}{1\over {x_1}}\left(\nabla u_m(x) \cdot x - \frac{5}{2}
u_m(x)\right)^2\,dx\\&\leq M^{x_1x_2}(r_m\sigma)-M^{x_1x_2}(r_m\tau)\to 0\quad\text{as }m\to\infty\\
&\textrm{ in the case } x^0_1=x^0_2=0,
\end{align*} which yields the desired homogeneity of $u_0$.

(iii): In order to show
strong convergence of $\nabla u_m$,
it is in view of the weak $L^2_w$-convergence of $\nabla
u_m$ sufficient to prove convergence of the $L^2_w$-norm.

Let $\delta:=\text{dist}(x^0,\partial\Om)/2$. Then, for each $m$, $u_m$ is a variational solution of
\begin{align} \label{strongpm}
 \div\left( \frac{\nabla u_m(x)}{(x^0+r_mx)_1}\right) &=0\quad\left\{
\begin{array}{l}\textrm{in } B_{\delta/r_m} \cap \{u_m>0\} \textrm{ in the case } x^0_1>0,\\
\textrm{in } B^+_{\delta/r_m} \cap \{u_m>0\} \textrm{ in the case } x^0_1=0.
\end{array}\right.
\end{align}
Since $u_m$ converges to $u_0$ locally uniformly, it follows from (\ref{strongpm}) that $u_0$ is harmonic in $\{ u_0>0\}$ in the case $x^0_1>0$ and a solution of the equation
$$\div\left({1\over {x_1}}\nabla u_0\right)=0$$
in the case $x^0_1=0$.
Also, using the uniform convergence, the continuity of
$u_0$ and its solution property in $\{ u_0>0\}$
we obtain as in the proof of (\ref{part2}) that
\begin{align*}&o(1)+\int_{\R^2}
 \frac{1}{x^0_1}|\nabla u_m|^2\eta\,dx
\\&=\int_{\R^2}
 \frac{1}{(x^0+r_mx)_1}|\nabla u_m|^2\eta\,dx =
-\int_{\R^2} u_m \frac{1}{(x^0+r_mx)_1} \nabla u_m\cdot \nabla \eta \,dx
\\&\to
-\int_{\R^2} u_0 \frac{1}{x^0_1}\nabla u_0\cdot \nabla \eta\,dx =\frac{1}{x^0_1}
\int_{\R^2} |\nabla
u_0|^2\eta\,dx\textrm{ in the case } x^0_1>0 \textrm{ and that}
\\
&\int_{\R^2_+}
 \frac{1}{x_1}|\nabla u_m|^2\eta\,dx =
-\int_{\R^2_+} u_m \frac{1}{x_1} \nabla u_m\cdot \nabla \eta \,dx
\\&\to
-\int_{\R^2_+} u_0 \frac{1}{x_1}\nabla u_0\cdot \nabla \eta\,dx =
\int_{\R^2_+} \we |\nabla
u_0|^2\eta\,dx
\textrm{ in the case } x^0_1=0
\end{align*} as $m\to \infty$. It therefore follows that $\nabla u_m$ converges strongly in $L^2_w$ (and in $L^2$ if $x^0_1>0$)
to $\nabla u_0$ as
$m\to\infty$.

(iv): Let us take a sequence $r_m\to 0+$ such that $u_m$
defined in (\ref{blo1})-(\ref{blo4}) converges weakly in
$W^{1,2}_{w,\textnormal{loc}}(\R^2_+)$ (weakly in $W^{1,2}_{\textnormal{loc}}(\R^2)$ in the case $x^0_1>0$)
to a function $u_0$.
Using (iii) and the homogeneity of $u_0$,
we obtain that
\begin{align*} &\lim_{m\to \infty} M^{int}(r_m)
 = {1\over {x^0_1}}\left(\int_{B_1} |\nabla u_0|^2\,dx - \int_{\partial B_1} u_0^2
\dh \right)\non \\ \non  &\qquad + \lim_{r\to 0+} r^{-2} \int_{B^+_r(x^0)} x_1x_2 \chi_{\{
u>0\}}\,dx\\
& =  x^0_1 x^0_2 \lim_{r\to 0+} r^{-2} \int_{B^+_r(x^0)} \chi_{\{
u>0\}}\,dx,\\&
\lim_{m\to \infty} M^{x_2}(r_m)
 = {1\over {x^0_1}}\left(\int_{B_1} |\nabla u_0|^2\,dx - \frac{3}{2}\int_{\partial B_1} u_0^2
\dh \right)\non \\ \non  &\qquad + \lim_{r\to 0+} r^{-3} \int_{B^+_r(x^0)} x_1x_2 \chi_{\{
u>0\}}\,dx\\
& =  x^0_1 \lim_{r\to 0+} r^{-3} \int_{B^+_r(x^0)} x_2 \chi_{\{
u>0\}}\,dx,\\&
\lim_{m\to \infty} M^{x_1}(r_m)
 = \int_{B_1^+} \we |\nabla u_0|^2\,dx - 2\int_{\partial B_1^+} \we u_0^2
\dh \non \\ \non  &\qquad + \lim_{r\to 0+} r^{-3} \int_{B^+_r(x^0)} x_1x_2 \chi_{\{
u>0\}}\,dx
\end{align*}
\begin{align*}
& =  x^0_2 \lim_{r\to 0+} r^{-3} \int_{B^+_r(x^0)} x_1\chi_{\{
u>0\}}\,dx,\\&
\lim_{m\to \infty} M^{x_1 x_2}(r_m)
 = \int_{B_1^+} \we |\nabla u_0|^2\,dx - \frac{5}{2}\int_{\partial B_1^+} \we u_0^2
\dh \non \\ \non  &\qquad + \lim_{r\to 0+} r^{-4} \int_{B^+_r(x^0)} x_1x_2 \chi_{\{
u>0\}}\,dx\\
& =  \lim_{r\to 0+} r^{-4} \int_{B^+_r(x^0)} x_1x_2 \chi_{\{
u>0\}}\,dx.\end{align*}
  In the case $x^0_2>0$, $M^{int}(0+)\ge 0$ and $M^{x_1}(0+)\ge 0$, and equality implies that
$u_m$ converges to $0$ in measure in $\R^2_+$.
\end{proof}

The next lemma will be useful in the characterization of blow-up limits in
Proposition \ref{2dim}.

\begin{lemma}\label{legendre}
The Legendre function $y=P_{3/2}$ satisfies
$$x\mapsto\frac{y'(x)}{y'(-x)} \textrm{ is strictly increasing on } (-1,1).$$
\end{lemma}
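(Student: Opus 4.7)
The plan is to exploit the Legendre equation $(1-x^2) y'' - 2x y' + \tfrac{15}{4} y = 0$ satisfied by $y=P_{3/2}$ to reduce the monotonicity of $f(x) := y'(x)/y'(-x)$ to a single sign computation. The key observation is that the auxiliary symmetric combination
\[ G(x) := y(x)\,y'(-x) + y(-x)\,y'(x) \]
satisfies a trivial first-order ODE that can be integrated explicitly.

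First I would differentiate $f$ directly to obtain
\[ f'(x)\, y'(-x)^2 \;=\; y''(x)\, y'(-x) + y'(x)\, y''(-x). \]
Evaluating the Legendre equation at $x$ and at $-x$, multiplying by $y'(-x)$ and $y'(x)$ respectively and adding, the mixed terms $\pm 2x\,y'(x)y'(-x)$ cancel and what remains is
\[ (1-x^2)\bigl[y''(x) y'(-x) + y''(-x) y'(x)\bigr] \;=\; -\tfrac{15}{4}\, G(x). \]
Thus $f'(x)$ has the sign of $-G(x)$ at every point of $(-1,1)$ at which $y'(-x)\ne 0$.

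Next I would derive an ODE for $G$. A direct differentiation gives $G'(x) = y(-x) y''(x) - y(x) y''(-x)$, and substituting $y''(x)$ and $y''(-x)$ from the Legendre equation the $\tfrac{15}{4}\, y(x) y(-x)$ contributions cancel, leaving the first-order linear equation
\[ (1-x^2)\, G'(x) \;=\; 2x\, G(x), \]
whose general solution is $G(x) = c/(1-x^2)$ with $c = G(0) = 2\, P_{3/2}(0)\, P'_{3/2}(0)$.

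It therefore remains to check that $c<0$; this is the only step requiring explicit information about $P_{3/2}$, and will be the subtle part. Using the standard value $P_\nu(0)=\sqrt{\pi}/[\Gamma((1-\nu)/2)\Gamma(1+\nu/2)]$ together with $\Gamma(-1/4)=-4\Gamma(3/4)$ and $\Gamma(7/4)=\tfrac{3}{4}\Gamma(3/4)$ I would get
\[ P_{3/2}(0) \;=\; -\frac{\sqrt{\pi}}{3\,\Gamma(3/4)^2} \;<\;0. \]
From the classical identity $(1-x^2)P'_\nu(x)=\nu[P_{\nu-1}(x)-xP_\nu(x)]$ applied at $x=0$ one gets $P'_{3/2}(0)=\tfrac{3}{2}P_{1/2}(0)=6\sqrt{\pi}/\Gamma(1/4)^2>0$. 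Hence $c<0$, so $G(x)<0$ on $(-1,1)$, and consequently $f'(x)>0$ wherever $f$ is defined. Everything beyond the sign check is routine manipulation of a linear ODE.
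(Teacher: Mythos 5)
Your proposal is correct and follows essentially the same route as the paper: reduce to the sign of $y''(x)y'(-x)+y''(-x)y'(x)$, use the Legendre equation to express this via $G(x)=y(x)y'(-x)+y(-x)y'(x)$, and conclude from $2P_{3/2}(0)P'_{3/2}(0)<0$. The only cosmetic difference is that you integrate the first-order ODE $(1-x^2)G'=2xG$ explicitly, whereas the paper deduces $G<0$ from the same ODE via a maximum-point argument.
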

\begin{proof}
It suffices to prove that
$$y''(x)y'(-x)+y''(-x)y'(x)>0 \textrm{ in } (-1,1).$$
Using the differential equation
$$(1-x^2)y''(x)-2xy'(x)+ \frac{3}{2}\frac{5}{2}y(x)=0,$$
we obtain
$$y''(x)y'(-x)+y''(-x)y'(x)=-\frac{15}{4}\frac{1}{1-x^2}
(y(x)y'(-x)+y(-x)y'(x)).$$
Therefore it is sufficient to prove that $f(x)=y(x)y'(-x)+y(-x)y'(x)<0$
in $(-1,1)$. As $f(x)\to -\infty$ for $|x|\to 1$,
must have a maximum point in $(-1,1)$.
At the maximum point,
$$0=f'(x)=\frac{2x}{1-x^2} (y(x)y'(-x)+y(-x)y'(x)),$$
implying that $x=0$ and that $$\max f = 2 y(0)y'(0)=3\frac{\sqrt{\pi}}{\Gamma(-{1\over 2})\Gamma({7\over 4})}P_{1\over 2}(0)$$ $$= 3\frac{\sqrt{\pi}}{\Gamma(-{1\over 4})\Gamma({7\over 4})}
\frac{\sqrt{\pi}}{\Gamma({1\over 4})\Gamma({5\over 4})}<0$$
(see http://functions.wolfram.com/07.07.20.0006.01,\\
http://functions.wolfram.com/07.07.03.0001.01).
\end{proof}
\begin{proposition}[Characterization of blow-up Limits]\label{2dim}
Let $u$ be a variational solution of {\rm
(\ref{strongp})}, and suppose that
\[\frac{|\nabla u|^2}{x_1}\le C  x_1 |x_2|\quad\text{locally in }\Omega,\]
and that
\[\int_{B^+_r(x^0)} \sqrt{x_2^+}\,d|\nabla \chi_{\{
u>0\}}|\le C_1 \left\{\begin{array}{l} r^{3\over 2}, x^0_2=0\\
r \sqrt{x^0_2}, x^0_2> 0\end{array}\right.\]
for all sufficiently small $r>0$.
\\
Then the following hold:
\\
(i) In the case $x^0_1>0$ and $x^0_2>0$,
the only possible blow-up limits of $u_m(x) = u(x^0+{r_m}x)/r_m$ are
\[u_0(x)= x^0_1\sqrt{x^0_2}\max(x\cdot e,0)\qquad\text{and}\qquad u_0(x)=\gamma |x\cdot e|,\]
where $e$ is a unit vector and $\gamma$ is a nonnegative constant.
In the case $u_0(x)= x^0_1\sqrt{x^0_2}\max(x\cdot e,0)$, the corresponding
density is $M^{int}(0+)=x^0_1x^0_2
\omega_2/2$,
in the case $u_0(x)=\gamma |x\cdot
e|$ with $\gamma>0$ the density is $M^{int}(0+)=x^0_1x^0_2\omega_2$, while in the case
$u_0=0$ the density has possible values  $M^{int}(0+)\in\{0,x^0_1x^0_2\omega_2\}$.
\\
(ii) In the case $x^0_1>0$ and $x^0_2=0$, the only possible blow-up limits are
\[u_0(\rho \sin \theta,\rho \cos \theta)= \frac{\sqrt{2}x^0_1}{3}\rho^{3/2}\cos({3\over 2}\theta)\chi_{\{(\rho\sin\theta,\rho\cos\theta): -\pi/3<\theta<\pi/3\}},\]
 with corresponding density
 \[ M^{x_2}(0+)=x^0_1 \int_{B_1} x_2\chi_{\{(\rho\sin\theta,\rho\cos\theta): -\pi/3<\theta<\pi/3\}}\,dx,\]
 and $u_0(x)=0$, with possible values
of the density \[ M^{x_2}(0+)\in \left\{x^0_1 \int_{B_1} x_2^+\,dx, x^0_1 \int_{B_1} x_2^-\,dx, 0\right\}.\]
\\
(iii) In the case $x^0_1=0$ and $x^0_2>0$, the only possible blow-up limits are
\[u_0(x) = \gamma x_1^2\]
with $\gamma$ a nonnegative constant and
 corresponding density
$$M^{x_1}(0+)=x^0_2 \int_{B^+_1} x_1\,dx,$$
and $u_0(x)=0$, with possible values
of the density \[M^{x_1}(0+)\in\left\{x^0_2 \int_{B^+_1} x_1\,dx, 0\right\}.\]
\\
(iv) In the case $x^0_1=x^0_2=0$, the only possible blow-up limits are
\[u_0(\rho\sin\theta,\rho\cos\theta)= r^{5\over 2}\> U_\ell(\theta)\]
 with corresponding density
\[ M^{x_1x_2}(0+)=\int_{B^+_1\cap \{(\rho\sin\theta,\rho\cos\theta): P'_{3/2}(-\cos \theta)<0\}} x_1 x_2\,dx ,\]
where $P_{3/2}$ is the Legendre function and $U_\ell$ is a unique
function which is positive in $B^+_1\cap \{P'_{3/2}(-\cos \theta)<0\}$
(an angle of $\approx 114.799^\circ$ in the positive $x_2$-direction)
and zero else,
 and $u_0(x)=0$, with possible values
of the density \[ M^{x_1x_2}(0+)\in\left\{\int_{B^+_1} x_1 x_2^+\,dx, \int_{B^+_1} x_1 x_2^-\,dx,0\right\}.\]
For $U_\ell$ we have the relations
$$\frac{5}{2} U_\ell(\theta)=c_0 \sin^2 \theta P_{3/2}'(\cos\theta),
 U_\ell'(\theta) = c_0 \frac{3}{2} \sin\theta P_{3/2}(\cos \theta)$$
with a unique positive constant $c_0$.
\end{proposition}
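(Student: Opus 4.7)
The plan is a unified blow-up argument for all four cases. Given any subsequence $r_m \to 0+$ along which the appropriately rescaled sequence converges weakly, Lemma~\ref{density_1}(ii) forces the blow-up limit $u_0$ to be homogeneous of the respective degree, and Lemma~\ref{density_1}(iii) upgrades the convergence to strong convergence of $\nabla u_m$ in $L^2_w$ (in $L^2$ if $x^0_1>0$). Strong convergence lets me pass to the limit in the interior equation, while the BV bound on $|\nabla\chi_{\{u>0\}}|$ hypothesized in the statement lets me pass to the limit in the free boundary condition; the outcome is that $u_0$ is a nonnegative homogeneous solution of the appropriate limit equation, satisfies the overdetermined free boundary condition on $\partial\{u_0>0\}$, and vanishes on the axis $\{x_1=0\}$ when $x^0_1=0$.

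Using homogeneity I write $u_0=\rho^\alpha g(\theta)$ with $(x_1,x_2)=(\rho\sin\theta,\rho\cos\theta)$, reducing the problem to an ODE for $g$. For cases (i) and (ii) the limit equation is Laplace's equation and the ODEs are $g''+g=0$ and $g''+\tfrac{9}{4}g=0$; the classification is then the standard Alt--Caffarelli half-plane/two-plane dichotomy in (i), while in (ii) the spatial dependence of the boundary condition $|\nabla u_0|^2=(x^0_1)^2 x_2$ forces $\{u_0>0\}$ to be symmetric about the $+x_2$-axis with opening $2\pi/3$, which identifies the Stokes corner and fixes the coefficient $\sqrt{2}\,x^0_1/3$.

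In cases (iii) and (iv) the limit equation is $\div(\nabla u_0/x_1)=0$ and the substitution $t=\cos\theta$ converts the angular ODE into $(1-t^2)g_{tt}+\alpha(\alpha-1)g=0$. In case (iii) with $\alpha=2$ the equation $(1-t^2)g_{tt}+2g=0$ has $g=1-t^2=\sin^2\theta$ as one solution and a second solution blowing up on the axis, so the unique profile with $g(\pm 1)=0$ is $u_0=\gamma x_1^2$. In case (iv) with $\alpha=5/2$ the substitution $g=(1-t^2)h$ transforms the equation into $(1-t^2)h''-4th'+\tfrac{7}{4}h=0$, which is exactly the equation satisfied by $h=P'_{3/2}(t)$ obtained by differentiating Legendre's equation for $\nu=3/2$. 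Hence the candidate profiles are $U_\ell=c\,\sin^2\theta\,P'_{3/2}(\cos\theta)$, with any interior free-boundary ray at $\theta=\theta^*$ required to satisfy $P'_{3/2}(\cos\theta^*)=0$.

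The main obstacle lies in case (iv), where I use Lemma~\ref{legendre}: the strict monotonicity of $P'_{3/2}(t)/P'_{3/2}(-t)$ on $(-1,1)$ implies that $P'_{3/2}$ has at most one zero in $(-1,1)$, so $\theta^*$ is uniquely determined. The requirement $x_2>0$ on the free boundary (built into the condition $|\nabla u_0|^2/x_1^2=x_2$) together with $u_0\ge 0$ picks out the water-above-air configuration and fixes the sign of $c_0$, while the free boundary condition on the ray $\theta=\theta^*$ fixes $|c_0|$ via $U_\ell'(\theta^*)^2=\sin^2\theta^*\cos\theta^*$ combined with the identity $U_\ell'=\tfrac{3}{2}c_0\sin\theta P_{3/2}(\cos\theta)$ coming from the Legendre equation. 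The density values in all four cases then follow by substituting the explicit $u_0$ into Lemma~\ref{density_1}(iv); for the trivial blow-up $u_0=0$ the two admissible values $0$ and the full-sector value correspond to $\chi_{\{u_m>0\}}$ either shrinking to zero measure or filling the positive part of the ball in the limit.
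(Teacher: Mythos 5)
Your overall architecture matches the paper's: homogeneity and strong convergence from Lemma~\ref{density_1}, passage to the limit in the domain-variation identity to obtain the overdetermined condition $|\nabla u_0|^2=x_1^2x_2$ (etc.), then classification of homogeneous solutions via an angular ODE. Your reduction for the axis cases, working directly with the stream function via $(1-t^2)g_{tt}+\alpha(\alpha-1)g=0$ and the substitution $g=(1-t^2)h$ to recover $P_{3/2}'$, is a legitimate alternative to the paper's route through the conjugate velocity potential, and your determination of the constants ($\sqrt{2}x^0_1/3$ in (ii), $|c_0|$ via $U_\ell'(\theta^*)^2=\sin^2\theta^*\cos\theta^*$ in (iv)) is correct.

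However, there is a genuine gap in the classification step for cases (iii) and (iv): the angular ODE is second order, so its solution space on each positivity sector is two-dimensional, and you only ever produce one solution. In case (iii) your stated reason for discarding the second solution of $(1-t^2)g_{tt}+2g=0$ --- that it ``blows up on the axis'' --- is false: reduction of order gives $g_2(t)=\tfrac{t}{2}+\tfrac{1-t^2}{4}\log\tfrac{1+t}{1-t}$, which is bounded on $[-1,1]$ and merely fails to vanish at $t=\pm 1$. That still excludes it \emph{if} the positivity set is the full half-disk, but you never rule out positivity sectors bounded by interior free-boundary rays, on which the condition $g(\pm1)=0$ is not available; excluding those requires an extra argument (the paper does it by showing the conjugate potential can have at most one free-surface angle and then invoking the singularities of $\Re Q_1$ at $\pm1$; equivalently one can use the strict monotonicity of $g_2/g_1$). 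In case (iv) the omission is more serious: the general solution is $g=(1-t^2)\bigl[a P_{3/2}'(t)+b P_{3/2}'(-t)\bigr]$, and the branch $b\neq 0$ is never addressed. Ruling it out is exactly where the paper needs Lemma~\ref{legendre} --- namely that $\alpha P_{3/2}'(x)+\beta P_{3/2}'(-x)$ has at most one zero in $(-1,1)$, hence at most one free-boundary ray --- combined with the logarithmic singularity of $P_{3/2}$ at $-1$ (which makes $(1-t^2)P_{3/2}'(\mp t)$ nonvanishing at $t=\mp 1$) and the constraint that the free boundary lies in $\{x_2>0\}$. You instead invoke Lemma~\ref{legendre} only to conclude that $P_{3/2}'$ has a unique zero, which is the special case $\beta=0$ and does not exclude the second branch or multi-sector configurations. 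As written, the proof does not establish uniqueness of the profile $U_\ell$ in (iv), which is the heart of the statement.
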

\begin{proof}
Consider a blow-up sequence $u_m$ as in
Lemma \ref{density_1}, where $r_m\to 0+$, with blow-up limit
$u_0$. Because of the strong convergence of $\nabla u_m$ to $\nabla u_0$ in $L^2$
and the compact embedding from
$BV$ into $L^1$, $u_0$ is a homogeneous solution of
\begin{align}
\label{dmvint} 0 = & \int_{\R^2} \frac{1}{x^0_1}\Big( {\vert \nabla u_0 \vert}^2
\div\phi - 2 \nabla u_0 D\phi \nabla u_0 \Big)\,dx +x^0_1 x^0_2 \int_{\R^2}
\chi_0 \div\phi\,dx\\
\non & \textrm{ in the case } x^0_1>0 \textrm{ and } x^0_2> 0,\\
\label{dmvx2}0
 = &\int_{\R^2} \frac{1}{x^0_1} \Big( {\vert \nabla u_0 \vert}^2 \div\phi -
2 \nabla u_0 D\phi \nabla u_0 \Big)\,dx
 + \int_{\R^2} \Big(x^0_1 x_2 \chi_0 \div\phi + x^0_1\chi_0 \phi_2\Big)\,dx\\
\non &\textrm{ in the case } x^0_1>0 \textrm{ and } x^0_2=0,\\
\label{dmvx1}0=&
\int_{\R^2_+} \frac{1}{x_1} \Big( {\vert \nabla u_0 \vert}^2 \div\phi
-\frac{1}{x_1}\vert\nabla u_0\vert^2 \phi_1
-
2 \nabla u_0 D\phi \nabla u_0 \Big)\,dx\\
\non & + \int_{\R^2_+} \Big(x_1 x^0_2 \chi_0 \div\phi\,dx + x^0_2 \chi_0 \phi_1 \Big)\,dx\\
\non &\textrm{ in the case } x^0_1=0 \textrm{ and }x^0_2> 0;\\
\label{dmvx1x2}0=&
\int_{\R^2_+} \frac{1}{x_1} \Big({\vert \nabla u_0 \vert}^2 \div\phi
-\frac{1}{x_1}\vert\nabla u_0\vert^2 \phi_1
-
2 \nabla u_0 D\phi \nabla u_0 \Big)\,dx\\
\non & + \int_{\R^2_+} \Big(x_1 x_2 \chi_0 \div\phi + x_2 \chi_0 \phi_1 + x_1\chi_0 \phi_2\Big)\,dx\\
\non &\textrm{ in the case } x^0_1=x^0_2=0;
\end{align}
the formulas are valid for every
$\phi=(\phi_1,\phi_2) \in C^1_0(\R^2;\R^2)$ in the case $x^0_1>0$ and
for every
$\phi=(\phi_1,\phi_2) \in C^1_0(\R^2;\R^2)$ such that $\phi_1=0$ on $\{ x_1=0\}$
in the case $x^0_1=0$.
Moreover
 $\chi_0$ is the strong $L^1_{\textnormal{loc}}$-limit of
$\chi_{\{u_m>0\}}$ along a subsequence. The values of the function
$\chi_0$ are almost everywhere in $\{0, 1\}$, and the locally
uniform convergence of $u_m$ to $u_0$ implies that $\chi_0=1$ in $\{
u_0>0\}$. Moreover $\chi_0$ is  constant in each connected component
of $\{ u_0=0\}^\circ\setminus \{ x_2=0\}$.
In the case $u_0= 0$, (\ref{dmvint})-(\ref{dmvx1x2}) show that
$\chi_0$ is constant in $\{ x_2\ne 0\}$ in the cases (\ref{dmvx2}) and (\ref{dmvx1x2})
and that $\chi_0$ is constant in the cases (\ref{dmvint}) and (\ref{dmvx1}).
Its value may be either $0$ or $1$.

 Let $z$ be an arbitrary  point in $
\partial\{ u_0=0\} \setminus \{ 0\}$. Consider first the case when $B_\delta(z)\cap\{u_0>0\}$ has exactly one connected
component.
Note that the normal to
$\partial \{ u_0=0\}$ has the constant value
$\nu(z)$ in $B_\delta(z)$ for some $\delta>0$. Plugging in
$\phi(x):= \eta(x)\nu(z)$ into (\ref{dmvint})-(\ref{dmvx1x2}),
where $\eta \in C^1_0(B_\delta(z))$ is arbitrary, and integrating by
parts, it follows that
\begin{align}
\label{dmvint2}0= & \int_{\partial\{ u_0>0\}} \left(
-\frac{1}{x^0_1}|\nabla u_0|^2 + x^0_1x^0_2 (1-\bar\chi_0)\right)\eta
\,d\mathcal{H}^1\\
\non & \textrm{ in the case } x^0_1>0 \textrm{ and } x^0_2> 0,\\
\label{dmvx22}0= &\int_{\partial\{ u_0>0\}} \left(
-\frac{1}{x^0_1}|\nabla u_0|^2 + x^0_1x_2 (1-\bar\chi_0)\right)\eta
\,d\mathcal{H}^1\\
\non &\textrm{ in the case } x^0_1>0 \textrm{ and } x^0_2=0,\\
\label{dmvx12}0= & \int_{\partial\{ u_0>0\}} \left(
-\frac{1}{x_1}|\nabla u_0|^2 + x_1x^0_2 (1-\bar\chi_0)\right)\eta
\,d\mathcal{H}^1\\
\non & \textrm{ in the case } x^0_1=0 \textrm{ and } x^0_2> 0,\\
\label{dmvx1x22}0= &\int_{\partial\{ u_0>0\}} \left(
-\frac{1}{x_1}|\nabla u_0|^2 + x_1x_2 (1-\bar\chi_0)\right)\eta
\,d\mathcal{H}^1\\
\non &\textrm{ in the case } x^0_1=x^0_2=0.
\end{align}
Here
$\bar\chi_0$ denotes the constant value of $\chi_0$ in $\{
u_0=0\}^\circ$. Note that by Hopf's principle, $\nabla u_0\cdot
\nu\ne 0$ on $B_\delta(z)\cap \partial\{ u_0>0\}$. In all
cases it follows therefore that $\bar\chi_0\neq 1$, and
hence necessarily $\bar\chi_0=0$. We deduce from (\ref{dmvint2})-(\ref{dmvx1x22})
that
\begin{align*}
|\nabla u_0|^2=&(x^0_1)^2x^0_2 \textrm{ on } \partial \{
u_0>0\} \\
\non & \textrm{ in the case } x^0_1>0 \textrm{ and } x^0_2> 0,\\
|\nabla u_0|^2=&(x^0_1)^2x_2 \textrm{ on } \partial \{
u_0>0\} \\
\non & \textrm{ in the case } x^0_1>0 \textrm{ and } x^0_2=0,\\
|\nabla u_0|^2=&x_1^2x^0_2 \textrm{ on } \partial \{
u_0>0\} \\
\non & \textrm{ in the case } x^0_1=0 \textrm{ and } x^0_2> 0,\\
|\nabla u_0|^2=&x_1^2x_2 \textrm{ on } \partial \{
u_0>0\} \\
\non & \textrm{ in the case } x^0_1=x^0_2=0.
\end{align*}

Next, let us try to compute $u_0$:
In the cases where $x^0_1>0$,
the homogeneity of $u_0$ and its harmonicity in
$\{u_0>0\}$ imply the following:
if $x^0_2>0$, then
each connected component of $\{u_0>0\}$ is a
half-plane passing trough the origin.
If $x^0_2=0$, then the fact that $u_0$ must be harmonic
in $\{ x_2<0\}$, implies that $\{u_0>0\}$ is
a
cone with vertex at the origin and of opening angle $120^\circ$
symmetric with respect to and containing $\{ (0, t): t>0\}$.

In the cases where $x^0_1=0$,
solving the resulting ODE leads to hypergeometric functions and
is slightly awkward, so we will instead use, in each section
of the unit disk where $u_0>0$, the velocity potential $\phi$
defined by
\begin{align*}
\partial_1 \phi = {1\over x_1}\partial_2 u, \partial_2 \phi = -{1\over x_1}\partial_1 u.
\end{align*}
In the case $x^0_2>0$ we obtain that $\phi(\rho\sin\theta,\rho\cos\theta)$
is homogeneous of degree $1$ and is on the unit circle given by a linear combination of
$P_1(\cos \theta)$ and $\Re(Q_1(\cos \theta))$, where $P_1$ and $Q_1$ are
the Legendre functions.
Now $P_1(x)=x$ and $\Re Q_1$ is a strictly convex function with singularities
at $-1$ and $1$, so that it is not possible that
$$\alpha P_1'(x)+ (\Re Q_1)'(x)=\alpha P_1'(y)+ (\Re Q_1)'(y) \textrm{ for } x\ne y \in (-1,1).$$
It follows that there can be at most one free surface point
of the solution $\alpha P_1(\cos\theta)+ \Re Q_1(\cos\theta)$ in $(0,\pi)$, but then the solution
would have at least one
singularity in the interval $[0,\pi]$. Thus the only solution possible is
$\sigma  P_1(\cos \theta)= \sigma  \cos \theta$, so that
$\phi(x)=\sigma x_2$ and $u_0(x)=c x_1^2$, where $c$ and $\sigma $ are non-negative constants.
The statement about the density follows as $\chi_0=1$ in $\{ u_0>0\}$.

In the case $x^0_2=0$ we obtain that $\phi(\rho\sin\theta,\rho\cos\theta)$
is homogeneous of degree $3/2$ and is on the unit circle given by a liner combination of
$P_{3/2}(\cos \theta)$ and $P_{3/2}(-\cos \theta)$, where $P_{3/2}$ is the
Legendre function. It is well known that $P_{3/2}$ has only one singularity at $-1$
and that $P'_{3/2}$ has in $(-1,1)$ a unique zero $z_0\in (-1,0)$.
By Lemma \ref{legendre}
we obtain
as in the last case
that $\alpha P_{3/2}(\cos \theta)+\beta P_{3/2}(-\cos \theta)$
can have at most one free surface point in $(0,\pi)$.
but then the solution
would have at least one
singularity in the interval $[0,\pi]$ unless $\beta=0$.
The fact that the singularity and the unique zero are both contained in
$[-1,0)$
implies therefore that either
\begin{align*}
\phi(\rho\sin\theta,\rho\cos\theta)= \sigma\rho^{3/2}  P_{3/2}(\cos \theta) \textrm{ in } \{0< \theta < \arccos(z_0)\}\\
\intertext{or}\\
\phi(\rho\sin\theta,\rho\cos\theta)= \sigma \rho^{3/2} P_{3/2}(-\cos \theta) \textrm{ in } \{\arccos(-z_0)<\theta<\pi\}.
\end{align*}
However the free surface must not intersect $\{x_2<0\}$, so that we obtain that the only admissible solution is
$$\phi(\rho\sin\theta,\rho\cos\theta)= \sigma \rho^{3/2} P_{3/2}(-\cos \theta) \textrm{ in } \{\arccos(-z_0)<\theta<\pi\}$$
for some nonzero constant $\sigma $.
Switching from the velocity potential back to $u_0$ we obtain the statement about
$u_0$ as well as the density.

Last, consider the situation when the set
$B_\delta(z)\cap\{u_0>0\}$ has two connected components.
The computations of $u_0$ in the respective cases show that
this is only possible for $x^0_1>0$ and $x^0_2>0$.
The argument for
(\ref{dmvint2}) yields in this case that the constant values of $|\nabla
u_0|^2$ on either side of $\partial \{u_0>0\}$ are equal. This
concludes the proof.
\end{proof}
\begin{lemma}\label{zero}
Let $u$ be a weak solution of {\rm(\ref{strongp})}
such that $u=0$ in $\{x_2\le 0\}$
and
suppose that
\[\frac{|\nabla u|^2}{x_1} \leq x_1 x_2^+\quad\text{in }\Omega.\]
 Then $x^0_2=0$,
$x^0_1>0$ and $M^{x_2}(0+)=0$
imply that $u\equiv 0$ in some
open $2$-dimensional ball containing $x^0$, while
$x^0_1=x^0_2=M^{x_1x_2}(0+)=0$ implies that $u\equiv 0$ in $B_\delta^+$ for some $\delta>0$.
\end{lemma}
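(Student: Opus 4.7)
The plan is to exploit, in each case, a blow-up analysis to upgrade the vanishing of $M(0+)$ to the pointwise vanishing of $u$ on a full (half-)neighborhood. I would first take an arbitrary sequence $r_m\to 0+$ and consider the rescaled sequence $u_m(x)=u(x^0+r_m x)/r_m^{3/2}$ in the first case, resp.\ $u_m(x)=u(r_m x)/r_m^{5/2}$ in the second. The Bernstein-type bound $|\nabla u|^2/x_1\le x_1 x_2^+$ gives uniform $C^{0,1}_{\textnormal{loc}}$ control of $u_m$, so by Lemma \ref{density_1}(ii)--(iii) a subsequence converges locally uniformly and strongly in $W^{1,2}_{w,\textnormal{loc}}$ to a homogeneous limit $u_0$ of the respective degree, and $\chi_{\{u_m>0\}}\to\chi_0$ in $L^1_{\textnormal{loc}}$.

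Next I would classify $u_0$ via Proposition \ref{2dim}. The assumption $u\equiv 0$ on $\{x_2\le 0\}$ passes to the blow-up, forcing $u_0\equiv 0$ on $\{x_2\le 0\}$ and $\chi_0\equiv 0$ a.e.\ on $\{x_2<0\}$; this eliminates the ``air-below'' zero-profile option $\chi_0\equiv\chi_{\{x_2<0\}}$. The non-trivial Stokes-corner (resp.\ Garabedian) profile of Proposition \ref{2dim}(ii) (resp.\ (iv)) carries a strictly positive density, and the remaining ``water-above'' zero-profile option has density $x^0_1\int_{B_1}x_2^+\,dx>0$ (resp.\ $\int_{B^+_1}x_1 x_2^+\,dx>0$); both are excluded by $M(0+)=0$. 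Hence the only admissible limit is $u_0\equiv 0$ with $\chi_0\equiv 0$ a.e. Since $r_m$ was arbitrary, this yields
\[ \sup_{B^+_r(x^0)} u = o(r^{3/2})\quad (\text{resp.}\ o(r^{5/2})),\qquad |\{u>0\}\cap B^+_r(x^0)|=o(r^2)\quad\text{as } r\to 0+. \]

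The final step turns these decay statements into the pointwise identity $u\equiv 0$ in a (half-)ball around $x^0$. Arguing by contradiction, suppose $u(y_k)>0$ along some sequence $y_k\to x^0$. Since $u\equiv 0$ on $\{x_2\le 0\}$ and on $\{x_1=0\}$, the $y_k$ and the nearby free-boundary points $z_k\in\partial\{u>0\}$ lie in the open quadrant $\{x_1>0,\,x_2>0\}$, where Definition \ref{weak} guarantees $C^{2,\alpha}$-regularity of $\partial\{u>0\}$. At each such $z_k$, Hopf's principle combined with the free-boundary condition $|\nabla u|^2=x_1^2 x_2$ yields a growth bound $\sup_{B_\rho(z_k)}u\ge c\,z_{k,1}z_{k,2}^{1/2}\rho$ for small $\rho$. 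A chaining / barrier argument for the weighted equation from (\ref{strongp}) then propagates this to the scale $r_k:=|x^0-y_k|$, producing $\sup_{B^+_{r_k}(x^0)}u\ge c\,r_k^{3/2}$ (resp.\ $c\,r_k^{5/2}$) along a subsequence, in direct contradiction with the decay just derived.

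The main obstacle is precisely this non-degeneracy upgrade: transferring quantitative boundary information into an interior lower bound in the presence of the degeneracies at $\{x_1=0\}$ and the fold $\{x_2=0\}$, which is the analogue for the weighted/degenerate setting of the classical Alt--Caffarelli growth lemma. Once in place, the classification of Proposition \ref{2dim} does all the remaining work, and the two sub-cases $(x^0_1>0,\,x^0_2=0)$ and $(x^0_1=x^0_2=0)$ run by essentially parallel arguments differing only in the scaling exponent and in the precise shape of the admitted profiles.
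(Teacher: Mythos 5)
Your blow-up and classification steps coincide with the paper's: $M(0+)=0$ together with $u=0$ in $\{x_2\le 0\}$ forces every blow-up limit to satisfy $u_0\equiv 0$ with $\chi_0\equiv 0$, exactly as the paper deduces from Proposition \ref{2dim}. The gap is in your final step, which you yourself flag as ``the main obstacle'' and then do not supply. The non-degeneracy estimate $\sup_{B^+_{r_k}(x^0)}u\ge c\,r_k^{3/2}$ (resp.\ $c\,r_k^{5/2}$) does \emph{not} follow from Hopf's principle plus chaining in this problem, and as a general statement about free boundary points it is false here: in cases $(i_2)$ and $(iii_2)$ of Theorem \ref{curve} one has $M^{x_2}(0+)=x^0_1\int_{B_1^+}x_2^+\,dx>0$, the blow-up limit is again $u_0\equiv 0$, and the solution decays strictly faster than the invariant scaling. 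The reason is precisely that the free boundary condition $|\nabla u|=x_1\sqrt{x_2}$ degenerates as $x_2\to 0$: the slope you extract at a nearby free boundary point $z_k$ is only $z_{k,1}\sqrt{z_{k,2}}$, which is $o(\sqrt{|z_k-x^0|})$ when the positive phase is a thin sliver hugging $\{x_2=0\}$, so chaining yields $o(r_k^{3/2})$ rather than $\gtrsim r_k^{3/2}$. The Alt--Caffarelli growth lemma needs the free boundary gradient bounded below, which is exactly what is unavailable near the fold. So your contradiction cannot be closed as written.

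What distinguishes the $M(0+)=0$ case from the horizontal-point case is that here $\chi_0\equiv 0$, i.e.\ the positivity set itself has vanishing density, and this is the information the paper actually uses --- in a way quite different from your proposal. The paper notes that $\div(\frac1{x_1}\nabla u_m)=\sqrt{x_2}\,{\mathcal H}^1\lfloor\partial_{\rm red}\{u_m>0\}$ converges to zero as a measure (since $u_0\equiv 0$), then considers a connected component $V_m$ of $\{u_m>0\}$ touching the origin; by the maximum principle $V_m$ must reach $\partial A$ for a fixed box $A$, and if its trace there does not concentrate at $\{x_2=0\}$ one already contradicts the vanishing of the weighted perimeter, while if it does concentrate, a weighted isoperimetric property (the trace of $V_m$ on $\partial A$ uniquely minimizes $\int\sqrt{x_2}\,d{\mathcal H}^1$ among sets with that trace) forbids $V_m$ from reaching the origin at all. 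You would need to replace your chaining/barrier step by an argument of this kind, or otherwise exploit $\chi_0\equiv 0$ quantitatively; as it stands, the decisive step of the lemma is missing.
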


\begin{proof} Suppose towards a contradiction that $x^0\in \partial\{
u>0\}$, and let us take a blow-up sequence \[u_m(x) :=
{u(x^0+{r_m}x)/{r_m^{3/2}}}\] converging weakly in
$W^{1,2}_{\textnormal{loc}}(\R^2)$ to a blow-up limit $u_0$ in the case that $x^0_1>0$, and
a blow-up sequence \[u_m(x) :=
{u(x^0+{r_m}x)/{r_m^{5/2}}}\] converging weakly in
$W^{1,2}_{w,\textnormal{loc}}(\R^2_+)$ to a blow-up limit $u_0$ in the case that $x^0_1=0$.
Proposition \ref{2dim} shows that $u_0=0$ in $\R^2$. Consequently,
\begin{align}\label{meas0}
&0\gets  \div ({1\over {x^0_1+r_m x_1}} \nabla u_m) (B_2)\ge \int_{B_2\cap
\partial_{\textnormal{red}} \{ u_m>0\}} \sqrt{x_2} \,d\mathcal{H}^1
\textrm{ in the case } x^0_1>0,\\ \non
&0\gets  \div ({1\over {x_1}} \nabla u_m) (B_2^+)\ge \int_{B_2^+\cap
\partial_{\textnormal{red}} \{ u_m>0\}} \sqrt{x_2} \,d\mathcal{H}^1
\textrm{ in the case } x^0_1=0
\end{align}
as $m\to\infty$. (Recall that $\div ({1\over {x_1}} \nabla u)$ is
a nonnegative Radon measure in $\Omega$.) On the other hand, there
is at least one connected component $V_m$ of $\{ u_m>0\}$ touching
the origin and containing by the maximum principle a point $x^m\in
\partial A$, where $A = (-1,1)\times (0,1)$ in the case $x^0_1>0$
and $A= (0,1)\times (0,1)$
in the case $x^0_1=0$. If $\max\{x_2: x \in
V_m\cap \partial A\}\not\to 0$ as $m\to\infty$, we immediately
obtain a contradiction to (\ref{meas0}). If $\max\{ x_2: x \in
V_m\cap \partial A\}\to 0$, we use the free-boundary condition as
well as $|\nabla u|^2/x_1^2 \leq x_2^+$ to obtain
\begin{align*}
&0=\div ({1\over {x^0_1+r_mx_1}} \nabla u_m) (V_m\cap
A) \le \int_{V_m\cap \partial A} \sqrt{x_2} \,d\mathcal{H}^1 -
\int_{A\cap
\partial_{\textnormal{red}} V_m} \sqrt{x_2} \,d\mathcal{H}^1\\&\qquad\qquad\qquad \qquad\qquad\qquad\textrm{ in the case } x^0_1>0,\\
&0=\div ({1\over {x_1}} \nabla u_m) (V_m\cap
A) \le \int_{V_m\cap \partial A} \sqrt{x_2} \,d\mathcal{H}^1 -
\int_{A\cap
\partial_{\textnormal{red}} V_m} \sqrt{x_2} \,d\mathcal{H}^1\\&\qquad\qquad\qquad\qquad\qquad\qquad\textrm{ in the case } x^0_1=0.
\end{align*} However
$\int_{V_m\cap \partial A} \sqrt{x_2} \,d\mathcal{H}^1$ is the
unique minimiser of $\int_{\partial D} \sqrt{x_2} \,d\mathcal{H}^1$
with respect to all open sets $D$ with $D=V_m$ on $\partial A$. So
$V_m$ cannot touch the origin, a contradiction.\end{proof}

\begin{theorem}[Curve Case]\label{curve}
Let $u$ be a weak solution of (\ref{strongp}) satisfying
\[\frac{|\nabla u|^2}{x_1}\le C  x_1 |x_2|\quad\text{locally in }\Omega,\]
and let $x^0\in\Om$
be such that $x^0_1x^0_2=0$.
Suppose in addition that $\partial\{ u>0\}\cap B_1^+(x^0)$ is in a neighborhood
of $x^0$ a continuous injective curve $\sigma:I\to \R^2$
such that $\sigma=(\sigma_1,\sigma_2)$ and $\sigma(0)=x^0$. Then the
following hold:

$(i_1)$ {\em Stokes corner:} If $x^0_1> 0$, $x^0_2=0$ and $$M^{x_2}(0+)= x^0_1 \int_{B^+_1} x_2\chi_{\{(\rho\sin\theta, \rho\cos\theta): -\pi/3<\theta<\pi/3\}}\,dx,$$ then (cf. Figure \ref{fig1})
$\sigma_1(t)\ne x^0_1$ in $(-t_1,t_1)\setminus \{ 0\}$ and, depending on the parametrization, either
$$ \lim_{t\to 0+} \frac{\sigma_2(t)}{\sigma_1(t)-x^0_1} = \frac{1}{\sqrt{3}}
\textrm{ and } \lim_{t\to 0-} \frac{\sigma_2(t)}{\sigma_1(t)-x^0_1} = -\frac{1}{\sqrt{3}},$$
or
$$ \lim_{t\to 0+} \frac{\sigma_2(t)}{\sigma_1(t)-x^0_1} = -\frac{1}{\sqrt{3}}
\textrm{ and } \lim_{t\to 0-} \frac{\sigma_2(t)}{\sigma_1(t)-x^0_1} = \frac{1}{\sqrt{3}}.$$

\begin{minipage}{\textwidth}
\begin{center}
\begin{picture}(0,0)%
\includegraphics{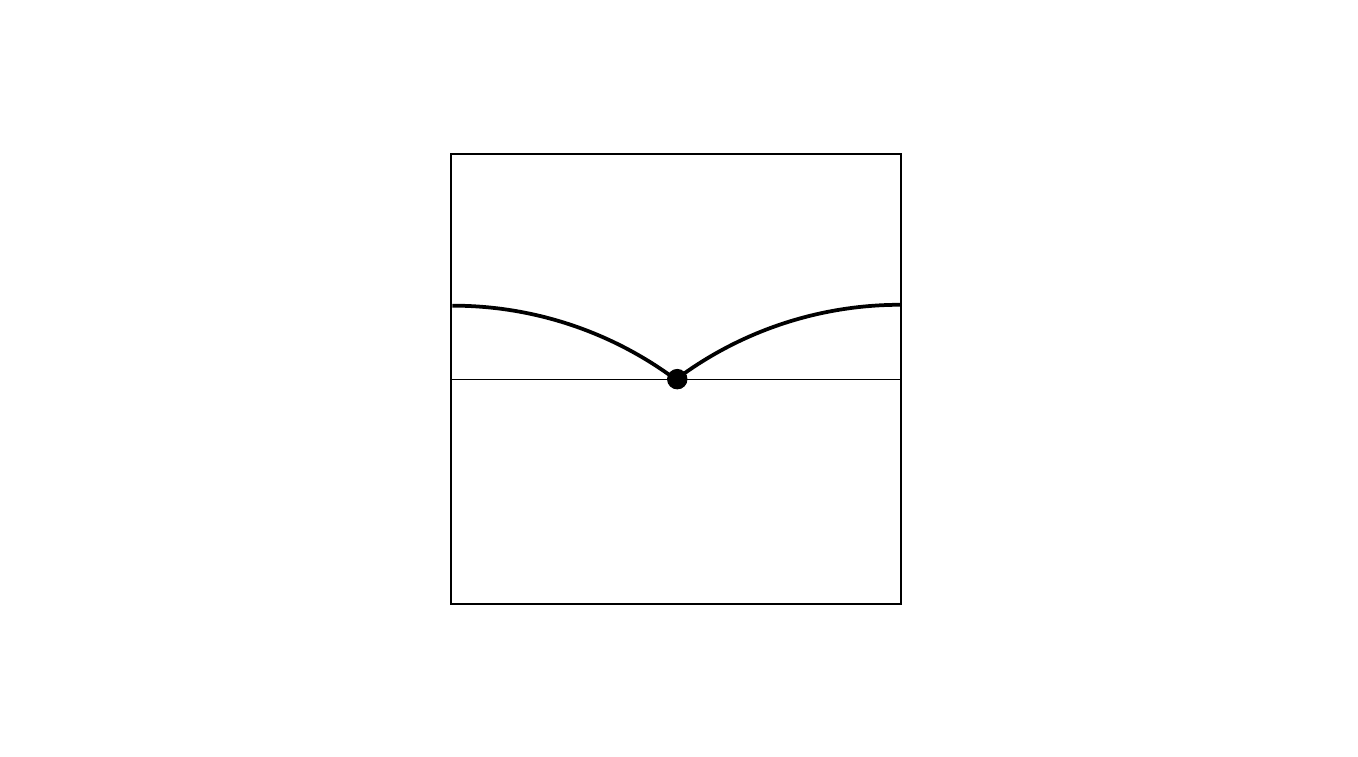}%
\end{picture}%
\setlength{\unitlength}{2368sp}%
\begingroup\makeatletter\ifx\SetFigFont\undefined%
\gdef\SetFigFont#1#2#3#4#5{%
  \reset@font\fontsize{#1}{#2pt}%
  \fontfamily{#3}\fontseries{#4}\fontshape{#5}%
  \selectfont}%
\fi\endgroup%
\begin{picture}(10834,6044)(-2411,-5183)
\put(2594,-1234){\makebox(0,0)[lb]{\smash{{\SetFigFont{11}{13.2}{\rmdefault}{\mddefault}{\updefault}{\color[rgb]{0,0,0}$u>0$}%
}}}}
\put(2567,-3115){\makebox(0,0)[lb]{\smash{{\SetFigFont{11}{13.2}{\rmdefault}{\mddefault}{\updefault}{\color[rgb]{0,0,0}$u=0$}%
}}}}
\end{picture}%

\end{center}
\captionof{figure}{Stokes corner ($x^0_1>0, x^0_2=0$)}\label{fig1}
\end{minipage}

$(i_2)$ If $x^0_1> 0$, $x^0_2=0$ and $M^{x_2}(0+)=x^0_1\int_{B^+_1} x^+_2\,dx$
or $M^{x_2}(0+)=x^0_1\int_{B^+_1} x^-_2\,dx$,
then (cf. Figure \ref{fig6})
$\sigma_1(t)\ne x^0_1$ in $(-t_1,t_1)\setminus \{ 0\}$,
$\sigma_1-x^0_1$ changes sign at $t=0$ and
$$ \lim_{t\to 0} \frac{\sigma_2(t)}{\sigma_1(t)-x^0_1} = 0.$$

\begin{minipage}{\textwidth}
\begin{center}
\begin{picture}(0,0)%
\includegraphics{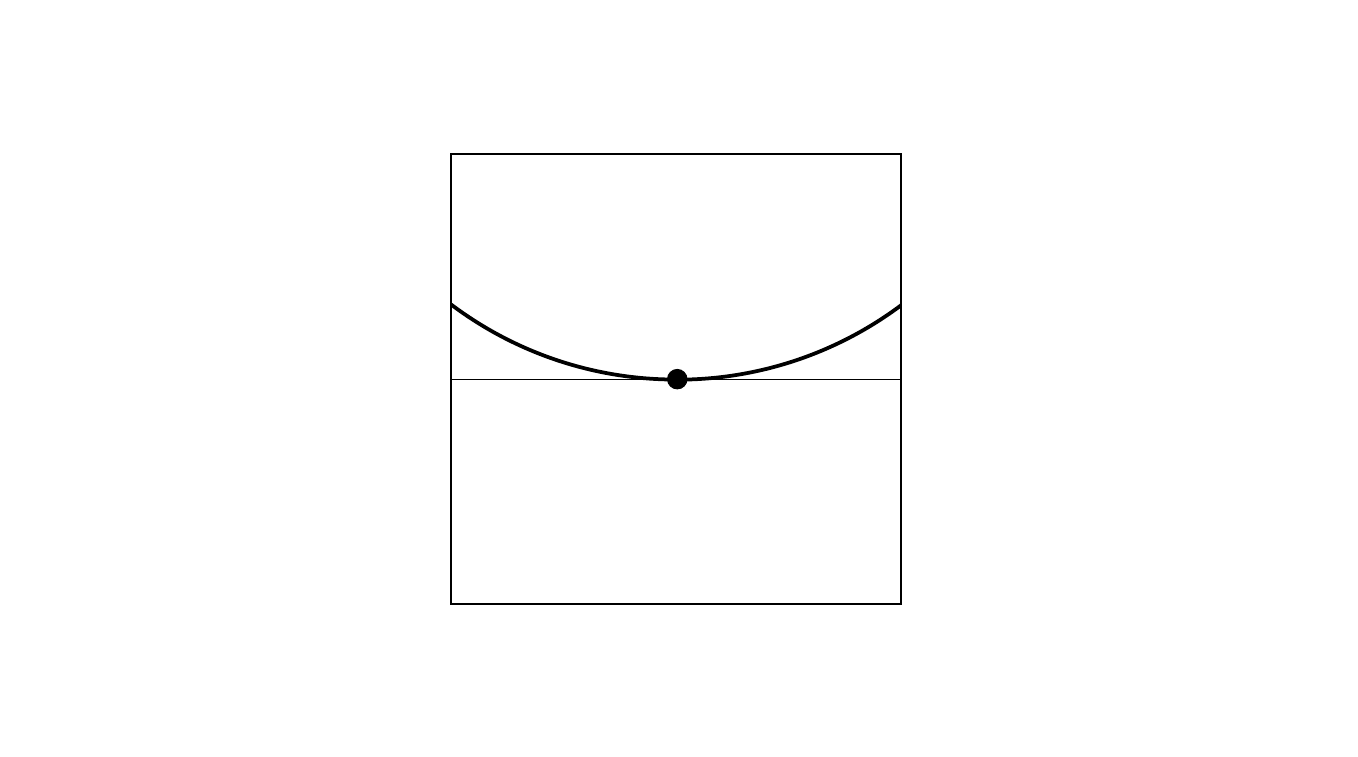}%
\end{picture}%
\setlength{\unitlength}{2368sp}%
\begingroup\makeatletter\ifx\SetFigFont\undefined%
\gdef\SetFigFont#1#2#3#4#5{%
  \reset@font\fontsize{#1}{#2pt}%
  \fontfamily{#3}\fontseries{#4}\fontshape{#5}%
  \selectfont}%
\fi\endgroup%
\begin{picture}(10834,6044)(-2411,-5183)
\put(2594,-1234){\makebox(0,0)[lb]{\smash{{\SetFigFont{11}{13.2}{\rmdefault}{\mddefault}{\updefault}{\color[rgb]{0,0,0}$u>0$}%
}}}}
\put(2567,-3115){\makebox(0,0)[lb]{\smash{{\SetFigFont{11}{13.2}{\rmdefault}{\mddefault}{\updefault}{\color[rgb]{0,0,0}$u=0$}%
}}}}
\end{picture}%

\end{center}
\captionof{figure}{Horizontal point ($x^0_1>0, x^0_2=0$)}\label{fig6}
\end{minipage}

$(i_3)$ In the case $x^0_1>0$, $x^0_2=0$ and $M^{x_2}(0+)=0$ ---which is according to Lemma \ref{zero}
not possible at all provided that $u=0$ in $\{x_2\le 0\}$ and the sharp Bernstein inequality holds---,
then
$\sigma_1(t)\ne x^0_1$ in $(-t_1,t_1)\setminus \{ 0\}$,
 $\sigma_1-x^0_1$ does not change its sign at $t=0$, and
$$ \lim_{t\to 0} \frac{\sigma_2(t)}{\sigma_1(t)-x^0_1} = 0.$$

$(ii_1)$ If $x^0_2>0$, $x^0_1=0$ and $M^{x_1}(0+)=x^0_2\int_{B^+_1} x_1\,dx$, then (cf. Figures \ref{fig3}-\ref{fig5})
$\sigma_2(t)\ne x^0_2$ in $(0,t_1)$ and
$$ \lim_{t\to 0} \frac{\sigma_1(t)}{\sigma_2(t)-x^0_2} = 0,$$

\begin{figure}
\begin{center}
\input{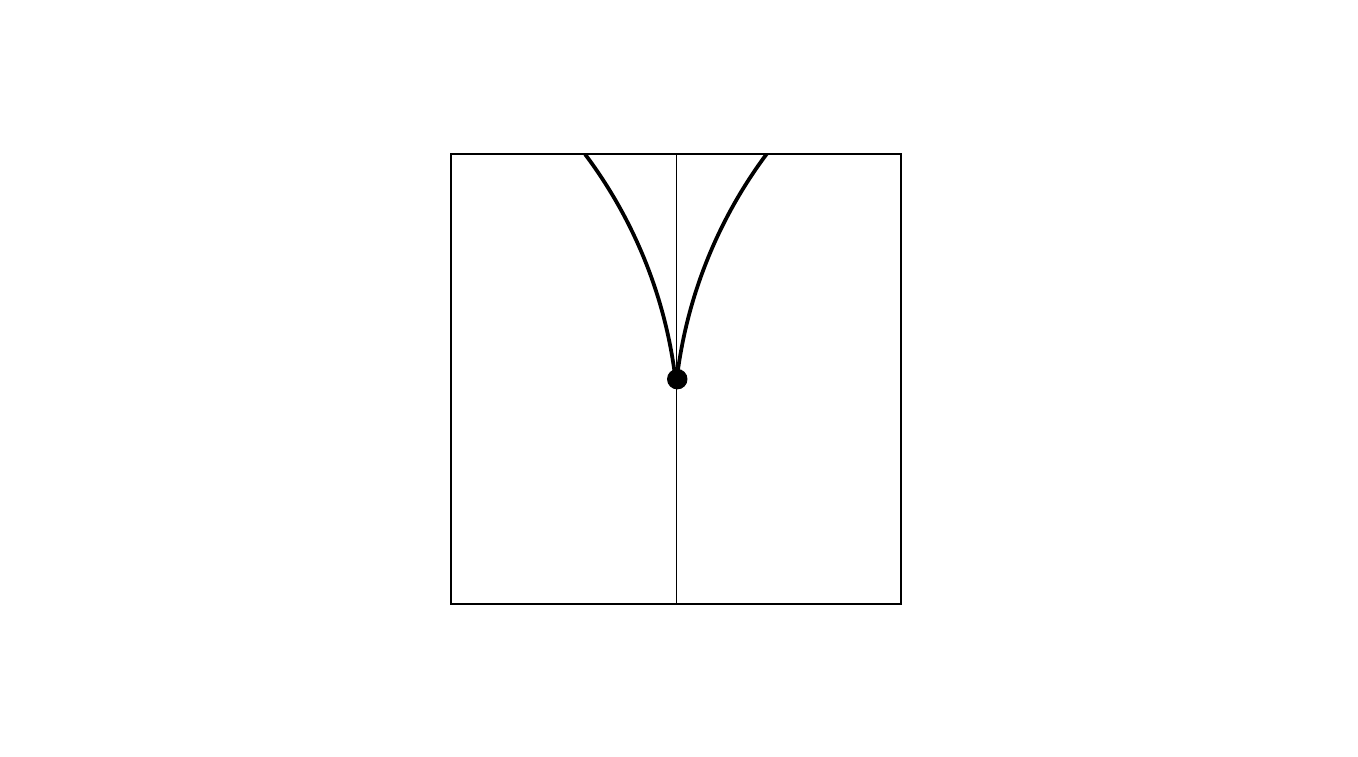_t}
\end{center}
\captionof{figure}{Downward Vertical Cusp ($x^0_1=0, x^0_2>0$)}\label{fig3}
\end{figure}
\begin{figure}
\begin{center}
\input{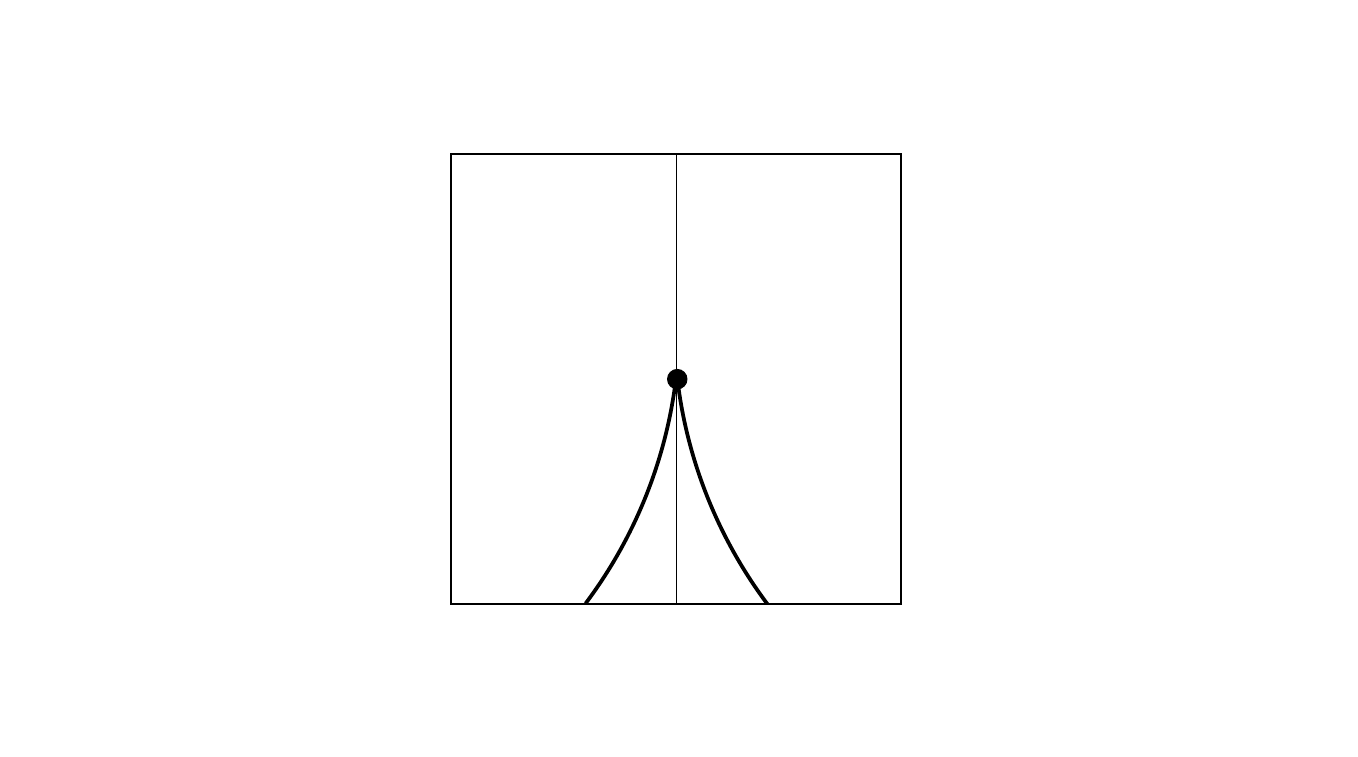_t}
\end{center}
\captionof{figure}{Upward Vertical Cusp ($x^0_1=0, x^0_2>0$)}\label{fig4}
\end{figure}
or
$\sigma_2(t)\ne x^0_1$ in $(-t_1,t_1)\setminus \{ 0\}$,
$\sigma_2-x^0_2$ changes sign at $t=0$ and
$$ \lim_{t\to 0} \frac{\sigma_1(t)}{\sigma_2(t)-x^0_2} = 0.$$

\begin{figure}
\begin{center}
\input{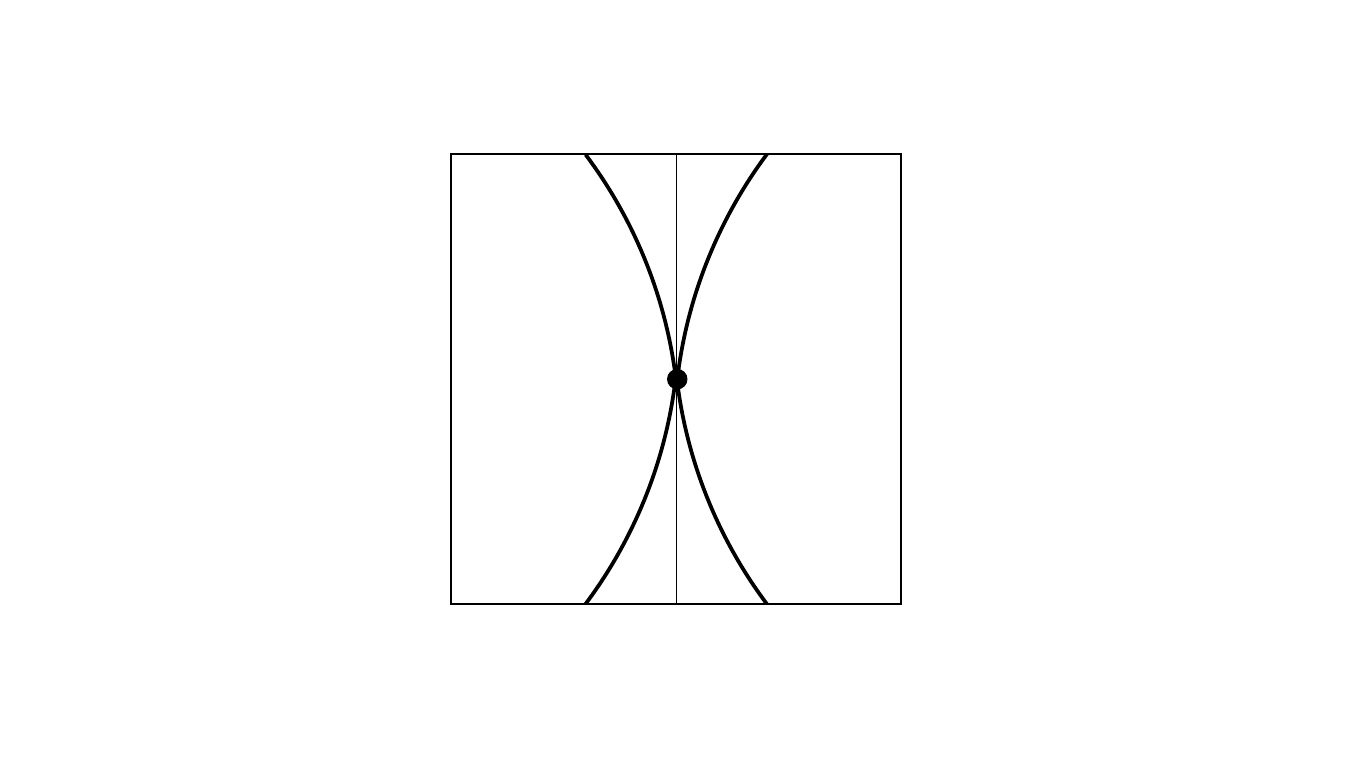_t}
\end{center}
\captionof{figure}{Double Vertical Cusp ($x^0_1=0, x^0_2>0$)}\label{fig5}
\end{figure}

$(ii_2)$ The case $x^0_2>0$, $x^0_1=0$ and $M^{x_1}(0+)=0$
is not possible.

$(iii_1)$ {\em Garabedian corner:} If $x^0_1=x^0_2=0$ and $$M^{x_1x_2}(0+)= \int_{B^+_1\cap \{P'_{3/2}(-\cos \theta)<0\}} x_1 x_2\,dx,$$ then (cf. Figure \ref{fig2})
$\sigma_1(t)\ne 0$ in $(0,t_1)$ and,
$$ \lim_{t\to 0+} \frac{\sigma_2(t)}{\sigma_1(t)} = \tan(\pi/2-\arccos(-z_0)).$$

\begin{figure}
\begin{center}
\input{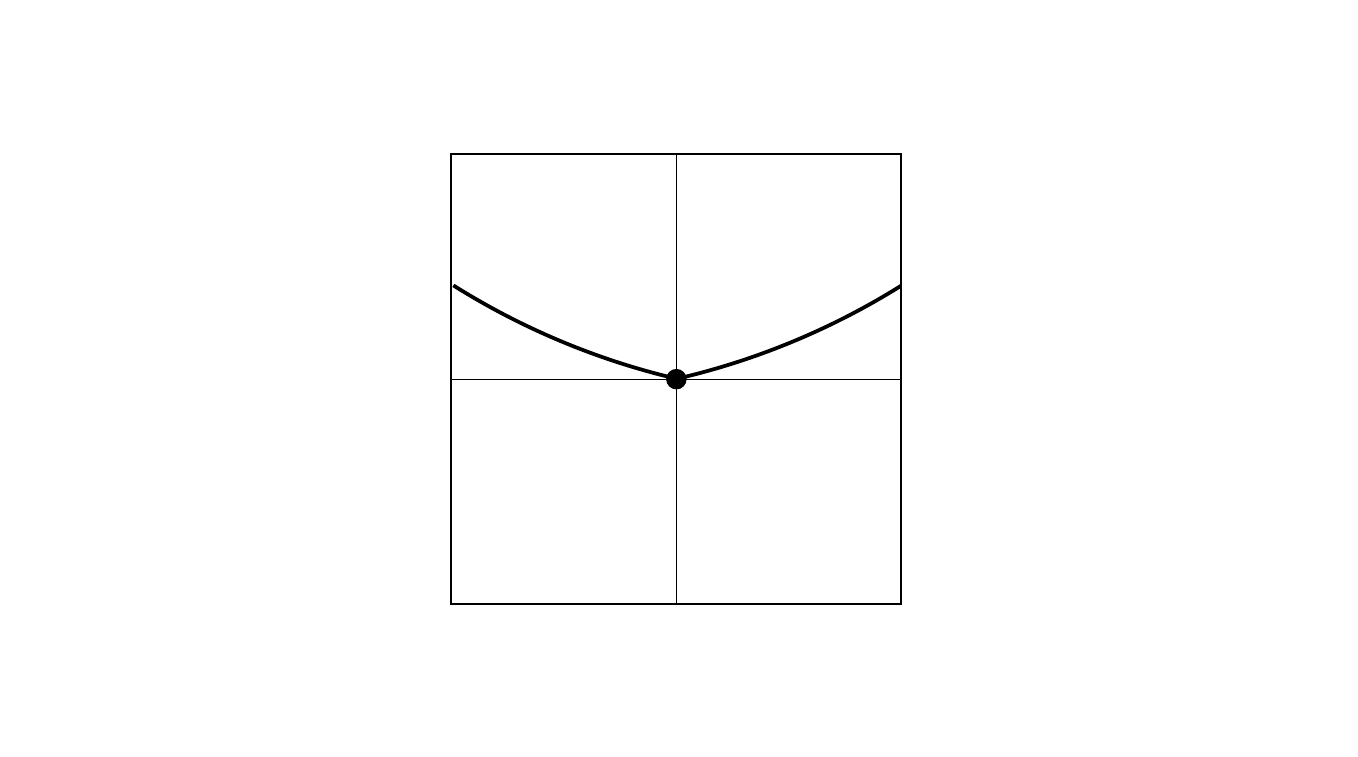_t}
\end{center}
\captionof{figure}{Garabedian corner ($x^0_1=x^0_2=0$)}\label{fig2}
\end{figure}

$(iii_2)$ If $x^0_1=x^0_2=0$ and $$M^{x_1x_2}(0+)=\int_{B^+_1} x_1x^+_2\,dx
\textrm{ or }M^{x_1x_2}(0+)=\int_{B^+_1} x_1x^-_2\,dx,$$
then (cf. Figure \ref{fig7}) $\sigma_1(t)\ne 0$ in $(0,t_1)$ and
$$ \lim_{t\to 0} \frac{\sigma_2(t)}{\sigma_1(t)} = 0.$$
(In the subsequent sections of the present paper we will analyze
the precise asymptotics of the velocity field in the case
$M^{x_1x_2}(0+)=\int_{B^+_1} x_1x^+_2\,dx$.)

\begin{figure}
\begin{center}
\input{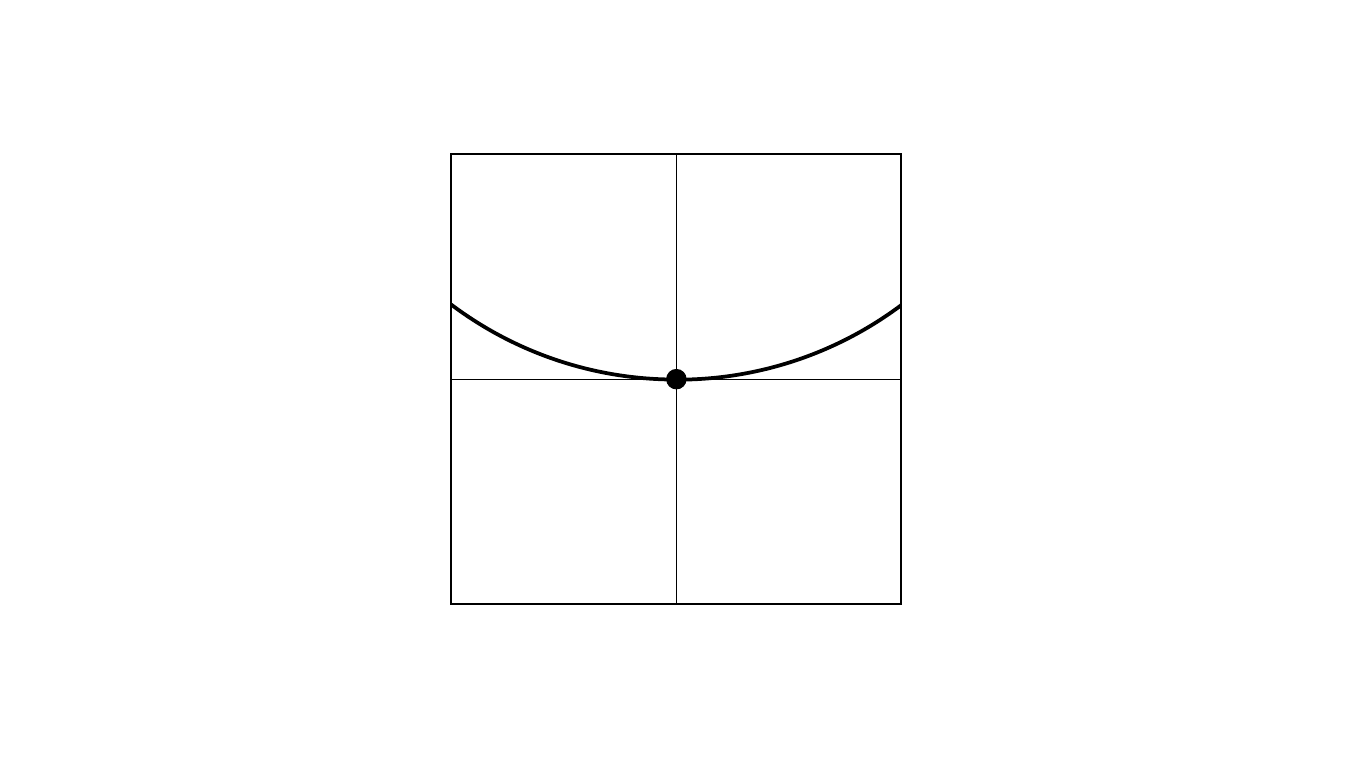_t}
\end{center}
\captionof{figure}{Horizontal point ($x^0_1=x^0_2=0$)}\label{fig7}
\end{figure}

$(iii_3)$ If $x^0_1=x^0_2=0$ and $M^{x_1x_2}(0+)=0$ ---which is according to Lemma \ref{zero}
not possible at all provided that $u=0$ in $\{x_2\le 0\}$ and the sharp Bernstein inequality holds---, then
$\sigma_1(t)\ne 0$ in $(-t_1,t_1)\setminus \{ 0\}$,
 and
$$ \lim_{t\to 0} \frac{\sigma_2(t)}{\sigma_1(t)} = 0.$$
\end{theorem}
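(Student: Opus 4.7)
The plan is to read off the local shape of $\sigma$ at $x^0$ from the unique blow-up profile attached to the numerical value of $M(0+)$. For each of the nine subcases, let $\alpha$ denote the scaling exponent from Lemma \ref{density_1} ($\alpha=3/2$ when $x^0_1>0,x^0_2=0$; $\alpha=2$ when $x^0_1=0,x^0_2>0$; $\alpha=5/2$ when $x^0_1=x^0_2=0$) and, given any sequence $r_m\to 0+$, form the blow-up $u_m(x)=u(x^0+r_m x)/r_m^\alpha$. By Lemma \ref{density_1}(iii), along a subsequence $u_m\to u_0$ strongly in the appropriate weighted Sobolev space, and $\chi_{\{u_m>0\}}\to\chi_0$ in $L^1_{\textnormal{loc}}$. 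Proposition \ref{2dim} then identifies $u_0$ and $\chi_0$ as a function of $M(0+)$: e.g.\ in $(i_1)$ the $120^\circ$ Stokes cone, in $(i_2)$ $u_0=0$ with $\chi_0=\chi_{\{x_2>0\}}$ or $\chi_{\{x_2<0\}}$, in $(ii_1)$ $u_0=\gamma x_1^2$, and in $(iii_1)$ the Garabedian profile supported in the cone $\{P'_{3/2}(-\cos\theta)<0\}$.

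Next I transfer this to the curve. Define the rescaled curve $\sigma_m(t):=(\sigma(r_m t)-x^0)/r_m$, which by hypothesis is an injective continuous parametrization of a piece of $\partial\{u_m>0\}$ passing through the origin. The strong convergence $u_m\to u_0$ and the $L^1$-convergence $\chi_{\{u_m>0\}}\to\chi_0$ yield local Hausdorff convergence of $\partial\{u_m>0\}$ to $\partial\{\chi_0=1\}$ in every fixed ball $B_R$ (in the positive half-plane when $x^0_1=0$); the one-sided bound on $|\nabla\chi_{\{u>0\}}|$ assumed in Proposition \ref{2dim} prevents loss of boundary in the limit. The image of $\sigma_m$ therefore accumulates precisely on $\partial\{\chi_0=1\}$, which in each subcase is an explicit finite union of rays through $0$ (or of half-lines $\{x_2=0\}$, $\{x_1=0\}$).

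Having identified the limiting boundary rays, the injectivity of $\sigma$ combined with continuity forces the conclusion: if the image of $\sigma_m$ had two distinct limit directions not corresponding to a \emph{single} ray crossed by $\sigma$ at $0$, we would produce two sequences $t_m^1,t_m^2\to 0$ with $\sigma_m(t_m^i)$ approaching distinct points on the same ray, contradicting either injectivity of $\sigma_m$ or the fact that the limit profile has a sharp, non-self-intersecting boundary. Thus in $(i_1)$ the one-sided tangent lines of $\sigma$ at $x^0$ are the two rays of the Stokes cone, giving the slopes $\pm 1/\sqrt 3$; in $(ii_1)$ the only limit ray is the vertical axis $\{X_1=0\}$, producing the stated cusp in one of the three injective configurations (Figures \ref{fig3}-\ref{fig5}) depending on whether the curve enters/exits the half-plane $\{X_1>0\}$ on one side or crosses it; in $(iii_1)$ the limit is the ray bounding the Garabedian sector, yielding the slope $\tan(\pi/2-\arccos(-z_0))$. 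In the ``horizontal point'' cases $(i_2),(iii_2)$ and in the degenerate cases $(i_3),(ii_2),(iii_3)$, $\partial\{\chi_0=1\}$ is either $\{x_2=0\}$ or empty; the first forces $\lim\sigma_2/\sigma_1=0$, and the last is ruled out by Lemma \ref{zero} under the hypotheses stated there (while for $(ii_2)$, $M^{x_1}(0+)=0$ is incompatible with $x^0\in\partial\{u>0\}$ via the same barrier argument as in Lemma \ref{zero}).

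I expect the main obstacle to be the third step: deducing the one-sided tangent behavior of the injective curve from Hausdorff convergence of free boundaries alone. A priori the curve could spiral or oscillate, producing a limit set larger than a single ray. The remedy is to use injectivity of $\sigma$ together with the fact that $\partial\{\chi_0=1\}$ is, in every case, a finite union of rays meeting only at $0$: any accumulation of $\sigma_m$ along two distinct rays would, by the intermediate value theorem applied to the angle function $\arg(\sigma(t)-x^0)$, require $\sigma$ to cross one of those rays transversally arbitrarily close to $0$, and every such crossing corresponds to a free-boundary point of $u_m$ bounded away from $0$ in $\partial\{u_m>0\}$; passing to the limit and invoking injectivity of $\sigma$ rules this out. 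Modulo this somewhat delicate topological argument, all nine statements reduce to reading off the explicit geometry from Proposition \ref{2dim}.
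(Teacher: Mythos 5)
Your overall strategy (blow up, identify the limit profile via Proposition \ref{2dim}, transfer the information to the rescaled curve, then use a connectivity/intermediate-value argument on the angle of $\sigma(t)-x^0$) is the same as the paper's. But there is a genuine gap at the crucial transfer step. You claim that strong convergence of $u_m$ together with $L^1_{\textnormal{loc}}$-convergence of $\chi_{\{u_m>0\}}$ yields local Hausdorff convergence of $\partial\{u_m>0\}$ to $\partial\{\chi_0=1\}$, with the upper bound on $\int\sqrt{x_2^+}\,d|\nabla\chi_{\{u>0\}}|$ "preventing loss of boundary". This is exactly backwards: an upper perimeter bound cannot prevent thin filaments of free boundary from surviving at unit scale while disappearing in the $L^1$ limit of the characteristic functions; what is needed is a quantitative \emph{lower} bound (nondegeneracy). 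The paper supplies it as follows: it blows up along the adapted radii $r_m=|\sigma(t_m)|$, so that the rescaled free boundary passes through a point converging to $(\sin\theta_0,\cos\theta_0)$; it uses that for a weak solution $\div(\frac{1}{x_1}\nabla u_m)=\sqrt{x_2}\,\mathcal{H}^1\lfloor\partial\{u_m>0\}$, so that the injective-curve hypothesis forces $(\div(\frac{1}{x_1}\nabla u_m))(\tilde B)\ge c(\theta_0,\rho)>0$ on a small ball $\tilde B$ about $(\sin\theta_0,\cos\theta_0)$ avoiding $\{x_1=0\}$, $\{x_2=0\}$ and the Garabedian ray; and it contrasts this with the weak-$*$ convergence of these nonnegative measures to $\div(\frac{1}{x_1}\nabla u_0)$, which vanishes on $\tilde B$. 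Without some version of this measure-theoretic lower bound your Hausdorff-convergence claim is unsupported, and it is the heart of the proof.

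Two smaller corrections. First, the limit set of directions is \emph{not} $\partial\{\chi_0=1\}$: the measure argument degenerates where $x_2=0$ (the weight $\sqrt{x_2}$ vanishes) and where $x_1=0$ (the operator degenerates), so in case (iii) one only gets $\mathcal{L}\subset\{0,\theta^*,\pi/2\}$; the subsequent identification of which of these angles actually occurs uses the density formula of Lemma \ref{density_1}(iv) (e.g.\ $\ell=0$ would force $M^{x_1x_2}(0+)=0$) and the argument of Lemma \ref{zero}, not merely the shape of $\{\chi_0=1\}$. Second, your topological step is more complicated than necessary and slightly misattributed: injectivity of $\sigma$ is not what excludes oscillation; rather, the set $\mathcal{L}$ of subsequential limits of the continuous function $t\mapsto\theta(\sigma(t))$ is automatically connected, and a connected subset of a finite set is a single point, so the limit $\ell$ exists once $\mathcal{L}$ is shown to be finite.
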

\begin{remark}
Although we omit a proof in the present paper, a perturbation of the frequency formula
in \cite{VW} (see \cite{VW2}) can be used to prove that, if $x_1^0>$ and $x_2^0=0$, then case $M^{x_2}(0+)=x^0_1\int_{B^+_1} x^+_2\,dx$ is not possible.
Case $(ii_1)$ seems possible as we have a nontrivial homogeneous solution.
We do at present not have an existence proof for the cusps suggested here.
\end{remark}
\begin{proof}
We prove the claimed results only case (iii), when $x^0_1=x^0_2=0$, the analysis in the other cases being similar.
For each $y=(y_1,y_2)\in \R^2$ with $y_1\geq 0$ and $(y_1,y_2)\neq (0,0)$, we define $\theta(y)\in [0,\pi]$ by the relation
\[(y_1,y_2)=(\rho(y)\sin\theta(y), \rho(y)\cos\theta(y)).\]
 We now consider the set
$${\mathcal L}= \{ \theta_0\in [0,\pi] \> : \>
\textrm{ there is } t_m \to 0\textrm{ such that }\theta(\sigma(t_m)) \to \theta_0
\textrm{ as } m\to\infty\}.$$
Note that in fact ${\mathcal L}\subset[0,\pi/2]$, since the free boundary $\partial\{u>0\}$ is contained in $\{x_2\geq 0\}$.

We now claim that:
{\em The set ${\mathcal L}$ is a subset of $\{ 0, \theta^*,\pi/2\}$, where $\theta^*=\arccos(-z_0)$ is the angle corresponding to the Garabedian cone.}\\
Indeed, suppose towards a contradiction that a sequence $0\ne t_m\to 0+, m\to\infty$ exists
such that $\theta(\sigma(t_m)) \to \theta_0\in {\mathcal L}
\setminus \{ 0, \theta^*,\pi/2\}$, let $r_m := |\sigma(t_m)|$
and let $$u_m(x) := \frac{u({r_m}x)}{r_m^{5/2}}.$$
For each $\rho>0$ such that $\tilde B := B_\rho(\sin\theta_0,\cos \theta_0)$ satisfies
$$\emptyset = \tilde B \cap
\left(\{ (\alpha,0)\> : \alpha\in \R_+\}\cup \{ (0,\alpha)\> : \alpha\in \R_+\} \cup \{ (\alpha\sin\theta^*,\alpha\cos\theta^*)\> : \alpha\in\R_+\}\right),$$
we infer from the formula for the unique blow-up limit $u_0$ (see Theorem \ref{2dim})
that the convergence of measures
$$(\div (\we \nabla  u_m))(\tilde B)\to (\div (\we \nabla  u_0))(\tilde B)=0\textrm{ as } m\to\infty.$$
On the other hand,
$$\div (\we \nabla  u_m)=  \sqrt{x_2} {\mathcal H}^1
\lfloor \partial{\{ u_m>0\}},$$
which implies, since
$\tilde B\cap \partial{\{ u_m>0\}}$ contains a curve of length at least $2\rho-o(1)$,
that
$$0 \gets (\div (\we \nabla  u_m))(\tilde B) \ge c(\theta_0,\rho)\textrm{ as } m\to\infty,$$
where $c(\theta_0,\rho)>0$, a contradiction.
This proves the property claimed.

 Now, a continuity argument yields that ${\mathcal L}$ is a connected set.
Consequently the limit
$$\ell = \lim_{t\to 0+} \theta(\sigma(t))$$ exists and is contained in the set
$\{ 0, \theta^*,\pi/2\}$. In what follows, we identify the value of $\ell$ in terms of the value of $M^{x_1x_2}(0+)$.

Suppose first that $M^{x_1x_2}(0+)= \int_{B^+_1\cap \{P'_{3/2}(-\cos \theta)<0\}} x_1 x_2\,dx$. Then, by Proposition \ref{2dim}, the blow-up limit is
\[u_0(\rho\sin\theta,\rho\cos\theta)= r^{5\over 2}\> U_\ell(\theta).\]
Since $ (\div (\we \nabla  u_0))(B_{1/100}(\sin\theta^*,\cos\theta^*))>0$, it follows that we cannot have $\ell\in\{0,\pi/2\}$, and therefore we must have
$\ell=\theta^*$. This proves case $(iii_1)$ of the Theorem.

Suppose now that $M^{x_1x_2}(0+)\in \left\{\int_{B^+_1} x_1 x_2^+\,dx, \int_{B^+_1} x_1 x_2^-\,dx,0\right\}$. Then the blow-up limit is $u_0(x)=0$. The same argument given earlier in the proof shows that $\ell\neq\theta^*$, so that necessarily $\ell\in\{0,\pi/2\}$. But then the formula in Lemma \ref{density_1} that
$$M^{x_1x_2}(0+)=\lim_{r\to 0+} r^{-4}
\int_{B^+_r(x^0)}x_1 x_2 \chi_{\{ u>0\}}\,dx,$$
shows that $\ell=0$ implies $M^{x_1x_2}(0+)=0$, while $\ell=\pi/2$ implies that $M^{x_1x_2}(0+)\in \left\{\int_{B^+_1} x_1 x_2^+\,dx, \int_{B^+_1} x_1 x_2^-\,dx, 0\right\}$. However, the possibility that $M^{x_1x_2}(0+)=0$ and $\ell=0$ is ruled out by the argument in the proof of Lemma \ref{zero}, even in the absence of the strict Bernstein condition. This proves the cases $(iii_2)$ and $(iii_3)$ of the Theorem.

\end{proof}

\section{Frequency formula}
From now on we will focus on the case $x^0_1=x^0_2=0$, $u=0$ in $\{ x_2\le 0\}$ and $M^{x_1x_2}(0+)=\int_{B^+_1} x_1x^+_2\,dx$,
in which we will derive a precise asymptotic profile of the velocity.
\begin{theorem}[Frequency Formula]\label{freq2}
Let $u$ be a variational solution of (\ref{strongp}), and let $\delta:=\dist(0,\partial\Omega)/2$.
Let, for any $r\in (0,\delta)$, \[D(r)= \frac{r\int_{B^+_r(0)}\we|\nabla u|^2\,dx}{\int_{\partial
B^+_r(0)}\we u^2\dh}\]
and
\[V(r)= \frac{r\int_{B^+_r(0)}
x_1x_2(1-\chi_{\{ u>0\}})\,dx}{\int_{\partial B^+_r(0)}\we u^2\dh}.\]
Then the
``frequency''
\begin{align*}H(r)&=D(r)-V(r)\\ &=  \frac{r \int_{B^+_r(0)} \Big(\we |\nabla u|^2+x_1x_2
(\chi_{\{ u>0\}}-1)\Big)\,dx}{\int_{\partial B^+_r(0)}\we u^2\dh}
\end{align*}
satisfies for a.e. $r\in (0,\delta)$ the identities
\begin{align} & H'(r)\non\\
  &= \frac{2}{r} \int_{\partial B^+_r(0)}\we \left[ \frac{r (\nabla
 u\cdot\nu)}{\left(\int_{\partial B^+_r(0)}\we u^2\dh\right)^{1/2}}
 -D(r)\frac{u}{\left(\int_{\partial
B^+_r(0)}\we u^2\dh\right)^{1/2}}\right]^2\dh\non\\&\qquad\qquad
+\frac{2}{r} V^2(r)+\frac{2}{r} V(r)\left(H(r)-\frac{5}{2}\right)\label{ff}\end{align}
and
 \begin{align} & H'(r)\non\\
  &= \frac{2}{r} \int_{\partial B^+_r(0)}\we\left[ \frac{r (\nabla
 u\cdot\nu)}{\left(\int_{\partial B^+_r(0)}\we u^2\dh\right)^{1/2}}
 -H(r)\frac{u}{\left(\int_{\partial
B^+_r(0)}\we u^2\dh\right)^{1/2}}\right]^2\dh\non\\&\qquad\qquad+
\frac{2}{r} V(r)\left(H(r)-\frac{5}{2}\right).\label{newff}\end{align}
\end{theorem}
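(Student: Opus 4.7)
The plan is to derive both frequency identities from the monotonicity formula~(\ref{Mprx1x2}) by a purely algebraic manipulation, with the structural relation $J'(r)/J(r)=2D(r)/r$ playing the central role.

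First I would observe that $M^{x_1x_2}(r)$ differs from $r^{-5}J(r)(H(r)-\frac{5}{2})$ only by the constant $c_0:=\int_{B^+_1}x_1x_2\,dx$: indeed $r^{-4}\int_{B^+_r}x_1x_2\,dx=c_0$ by scaling, the remaining part of $I(r)$ is $\tilde I(r):=\int_{B^+_r}(\frac{1}{x_1}|\nabla u|^2+x_1x_2(\chi_{\{u>0\}}-1))\,dx=H(r)J(r)/r$ by the very definition of $H$, and $M^{x_1x_2}(r)=r^{-4}I(r)-\frac{5}{2}r^{-5}J(r)$. Hence~(\ref{Mprx1x2}) becomes
\[\frac{d}{dr}\!\left[r^{-5}J(r)\bigl(H(r)-\tfrac{5}{2}\bigr)\right]=r^{-4}\mathcal R(r),\qquad \mathcal R(r):=2\!\int_{\partial B^+_r}\!\tfrac{1}{x_1}\bigl(\nabla u\cdot\nu-\tfrac{5}{2}\tfrac{u}{r}\bigr)^2\dh.\]

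Next I would establish $J'(r)/J(r)=2D(r)/r$ by combining formula~(\ref{easy}), applied with $\alpha=0$ and $x^0=0$ to give $J'(r)=2\int_{\partial B^+_r}\frac{1}{x_1}u\nabla u\cdot\nu\,\dh$, with the integration-by-parts identity~(\ref{part2}), which rewrites this boundary integral as $\int_{B^+_r}\frac{1}{x_1}|\nabla u|^2\,dx$. Expanding the displayed identity by the product rule and multiplying through by $r^5/J(r)$ then produces the compact intermediate form
\[H'(r)=\frac{r\mathcal R(r)}{J(r)}+\bigl(H(r)-\tfrac{5}{2}\bigr)\frac{5-2D(r)}{r}.\]

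The remainder is a straightforward but fiddly algebraic rearrangement. To obtain~(\ref{newff}) I would expand both $\mathcal R(r)$ and the target quantity $\frac{2}{r}\int_{\partial B^+_r}\frac{1}{x_1}(r\nabla u\cdot\nu-H(r)u)^2/J(r)\,\dh$ in terms of the three boundary integrals $\int_{\partial B^+_r}\frac{1}{x_1}(\nabla u\cdot\nu)^2\,\dh$, $\int_{\partial B^+_r}\frac{1}{x_1}u\nabla u\cdot\nu\,\dh=J'(r)/2$ and $\int_{\partial B^+_r}\frac{1}{x_1}u^2\,\dh=J(r)$, then match them using $J'/J=2D/r=2(H+V)/r$; the residual collapses to exactly $\frac{2}{r}V(r)(H(r)-\frac{5}{2})$. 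Finally,~(\ref{ff}) follows from~(\ref{newff}) upon replacing $H(r)u$ inside the perfect square by $D(r)u=(H(r)+V(r))u$: the extra cross term is evaluated via $rJ'/(2J)=D$ and delivers the supplementary $\frac{2}{r}V^2(r)$ contribution. The main obstacle is simply keeping the constants $\frac{5}{2}$, $D$, $H$, $V$ straight during the bookkeeping; once $J'/J=2D/r$ is recognized, all subsequent manipulations are routine.
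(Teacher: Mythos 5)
Your proposal is correct and follows essentially the same route as the paper: both rest on the identity $(r^{-4}I(r))'$ from the monotonicity computation (you use its packaged form (\ref{Mprx1x2}), the paper substitutes (\ref{Iprx1x2}) directly), on $J'(r)=2\int_{\partial B^+_r}\we u\,\nabla u\cdot\nu\dh=2D(r)J(r)/r$ via (\ref{easy}) and (\ref{part2}), and on the same expansion of the perfect square with $D=H+V$ to pass between (\ref{ff}) and (\ref{newff}). The only difference is cosmetic (you derive (\ref{newff}) first, the paper derives (\ref{ff}) first via the quotient rule applied to (\ref{fgs})), and all your intermediate identities check out.
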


\begin{proof} Note that, for all $r\in (0,\delta)$,
\be H(r)= \frac{r^{-4}I(r)-\int_{B^+_1} x_1x_2\,dx}{r^{-5}J(r)}.\label{fgs}\ee
Hence, for a.e. $r\in (0,\delta)$,
\[ H'(r)=
\frac{(r^{-4}I(r))'}{r^{-5}J(r)}-\frac{(r^{-4}I(r)-\int_{B^+_1} x_1x_2\,dx)}{r^{-5}J(r)}\frac{(r^{-5}J(r))'}{r^{-5}J(r)},\non\]
Using the identities (\ref{Iprx1x2}) and (\ref{easy}) with $\alpha=-5$, we therefore obtain that, for a.e. $r\in (0,\delta)$,
\begin{align}
H'(r)&=\frac{\left(2r\ipbrx\we(\nabla u\cdot \nu)^2\dh- 5\ipbrx\we u (\nabla u\cdot \nu)\dh \right)}{\ipbrx \we u^2\dh}\non\\
&\qquad -(D(r)-V(r))\frac{1}{r}\frac{\left(2r\ipbrx\we u (\nabla u\cdot \nu)\dh-5\ipbrx \we u^2\dh\right)}{\ipbrx \we u^2\dh}\non\\
&=\frac{2}{r}\left(\frac{r^2\ipbrx\we (\nabla u\cdot \nu)^2\dh}{\ipbrx\we u^2\dh}-\frac{5}{2}D(r)\right)\non\\
&\qquad-\frac{2}{r}(D(r)-V(r))\left(D(r)-\frac{5}{2}\right),\label{wq}
\end{align}
where we have also used the fact, which follows from (\ref{part2}), that
\be D(r)=\frac{r\ipbrx \we u (\nabla u\cdot \nu)\dh}{\ipbrx \we u^2\dh}\label{hfvv}.\ee
Identity (\ref{ff}) now follows by merely rearranging (\ref{wq}), making use again of (\ref{hfvv}) and the fact that $D(r)=V(r)+H(r)$.

Since (\ref{ff}) holds, it follows by inspection that (\ref{newff}) holds if and only if
\begin{align} \int_{\partial B^+_r(0)}&\we\left[{r (\nabla
u\cdot\nu)}
-D(r){u}\right]^2\dh + V^2(r)\int_{\partial B^+_r(0)}\we u^2\dh\non\\& =\int_{\partial B^+_r(0)}\we \left[{r (\nabla
u\cdot\nu)}
-H(r){u}\right]^2\dh\label{ttr}
.\end{align}
However, (\ref{ttr}) is easily verified as a consequence of (\ref{hfvv}) and the fact that $D(r)=H(r)+V(r)$.
In conclusion, identity (\ref{newff}) also holds.
\end{proof}

\begin{theorem}\label{mouthm}
Let $u$ be a variational solution of {\r
(\ref{strongp})} such that $u=0$ in $\{ x_2\le 0\}$, let $x^0=0$, suppose that $M^{x_1x_2}(0+)=\int_{B^+_1} x_1x^+_2\,dx$, and let $\delta:=\dist(0,\partial\Omega)/2$. Then the following hold:

(i) $H(r)\geq\frac{5}{2}\quad\text{for all }r\in (0,\delta)$.

(ii) The function $r\mapsto r^{-5}J(r)$ is nondecreasing on $(0,\delta)$.

(iii) The function $H$ is nondecreasing on $(0,\delta)$, and has a right limit $H(0+)$, where $H(0+)\ge 5/2$.

(iv) $r\mapsto \frac{1}{r}V^2(r)\in L^1(0, \delta)$.

\end{theorem}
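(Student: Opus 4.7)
My plan is to prove the four parts in the order (i), (ii), (iii), (iv), with (iii) being the main obstacle. The starting observation, used throughout, is that $\int_{B_1^+}x_1x_2\,dx = 0$ by odd symmetry in $x_2$ (the domain $B_1^+$ is invariant under $x_2\mapsto -x_2$ while the integrand is odd). Consequently the expression (\ref{fgs}) collapses to the clean identity
\begin{equation*}
H(r)-\tfrac{5}{2}=\frac{M^{x_1x_2}(r)}{r^{-5}J(r)}.
\end{equation*}
For (i), the monotonicity formula (\ref{Mprx1x2}) gives $M^{x_1x_2}(r)\geq M^{x_1x_2}(0+)=\int_{B_1^+}x_1 x_2^+\,dx>0$, so the identity above immediately yields $H(r)\geq 5/2$.

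For (ii), I apply (\ref{easy}) with $\alpha=-5$, $x^0=0$, combined with the integration by parts formula (\ref{part2}), to compute $(r^{-5}J(r))'=2r^{-6}J(r)(D(r)-5/2)$. Hence it remains to establish $D\geq 5/2$. Since $u=0$ in $\{x_2\leq 0\}$ and $\int_{B_r^+}x_1x_2\,dx = 0$, one has $V(r)=-\frac{r}{J(r)}\int_{B_r^+}x_1x_2\chi_{\{u>0\}}\,dx\leq 0$. The pointwise inequality $\chi_{\{u>0\}}\leq \chi_{\{x_2>0\}}$ together with scaling yields $|V(r)|\leq M^{x_1x_2}(0+)/(r^{-5}J(r))$. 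Combining with (i), $|V|\leq H-5/2$, and so $D=H+V=H-|V|\geq 5/2$.

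For (iii), starting from (\ref{newff}),
\begin{equation*}
H'(r)=P'(r)+\frac{2V(r)(H(r)-5/2)}{r},
\end{equation*}
the perturbation term is non-positive, so a crude use of $P'\geq 0$ is not enough. The plan is to exploit the monotonicity identity (\ref{Mprx1x2}) more carefully by decomposing $\nabla u\cdot\nu - \tfrac{5u}{2r}=(\nabla u\cdot\nu - \tfrac{Du}{r})+\tfrac{(D-5/2)u}{r}$ inside the square. The cross term vanishes upon integration because $\int \tfrac{u(\nabla u\cdot\nu)}{x_1}d\mathcal{H}^1 = \tfrac{DJ}{r}$, yielding the clean formula $(M^{x_1x_2})'(r) = r^{-5}J(r)P(r)+2r^{-6}J(r)(D(r)-5/2)^2$, where $P$ is the square term of (\ref{ff}). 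Dividing this by $r^{-5}J$ and using the quotient rule with the identity for $H-5/2$, I arrive at the equivalent formulation $H'(r)=P(r)-\tfrac{2|V(r)|(D(r)-5/2)}{r}$. The crucial and delicate step is then to show $P(r)\geq \tfrac{2|V(r)|(D(r)-5/2)}{r}$; the naive Cauchy--Schwarz (testing the boundary integral with $u$) yields only $P\geq 2V^2/r$, which is insufficient since $V^2\leq |V|(D-5/2)+V^2=|V|(H-5/2)$ may well exceed $|V|(D-5/2)$. The sharper bound must use the PDE structure: I would integrate $\div(\nabla u/x_1)=0$ over $B_r^+\cap\{u>0\}$ and use the free boundary identity $|\nabla u|^2/x_1=x_1x_2$ on $\partial\{u>0\}$ to rewrite $|V|$ as a boundary flux, then compare that flux to $P$ via a weighted Cauchy--Schwarz. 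Once monotonicity is established, the right limit $H(0+)\in[5/2,\infty]$ exists automatically, and is finite because $H(\delta)<\infty$.

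For (iv), monotonicity and boundedness of $H$ from (iii) yield $\int_0^\delta H'(r)\,dr = H(\delta)-H(0+)<\infty$. From the identity derived in (iii), $P(r)=H'(r)+\tfrac{2|V|(D-5/2)}{r}$; since both summands are non-negative, $\int_0^\delta P\,dr<\infty$ and also $\int_0^\delta \tfrac{|V|(D-5/2)}{r}\,dr<\infty$. The relation $P'(r)=P(r)+2V^2(r)/r$ (a direct expansion using the decomposition $\nabla u\cdot\nu - Hu/r = (\nabla u\cdot\nu - Du/r)+Vu/r$) together with $P'\geq 0$ and the integrability of $\int P$ then isolates $\int_0^\delta V^2/r\,dr<\infty$, which is (iv). The single hard step throughout is the improved inequality $P\geq 2|V|(D-5/2)/r$ in the proof of (iii); everything else reduces to bookkeeping on the monotonicity of $M^{x_1x_2}$ and $r^{-5}J$.
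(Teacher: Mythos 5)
Your proofs of (i) and (ii) are correct and essentially coincide with the paper's: for (ii) the paper computes, just as you do, that $(r^{-5}J(r))'=\frac{2}{r}\,r^{-5}J(r)\left(D(r)-\frac52\right)$ and bounds $D(r)-\frac52$ from below by $r^{-4}\int_{B^+_r}x_1x_2^+(1-\chi_{\{u>0\}})\,dx\ge 0$ using the monotonicity of $M^{x_1x_2}$. The gap is in (iii)--(iv), and it is not the missing estimate $P\ge 2|V|(D-5/2)/r$ that you flag: it is your literal reading of the definition of $V$. You correctly observe that $\int_{B^+_1}x_1x_2\,dx=0$ and hence that, as printed, $V\le 0$; but with that convention statement (iii) is actually \emph{false} in the regime this theorem addresses, so no refined inequality can rescue your route. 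Indeed, your own identity $H(r)-\frac52=M^{x_1x_2}(r)/(r^{-5}J(r))\ge M^{x_1x_2}(0+)/(r^{-5}J(r))$, combined with (ii) and the fact that $r^{-5}J(r)\to 0$ as $r\to 0+$ in the degenerate case $M^{x_1x_2}(0+)=\int_{B^+_1}x_1x_2^+\,dx$ (the blow-ups $u(r_m\cdot)/r_m^{5/2}$ then converge to $u_0=0$, and the compact trace embedding forces $r_m^{-5}J(r_m)\to\int_{\partial B^+_1}\we u_0^2\dh=0$), gives $H(0+)=+\infty$, incompatible with $H$ nondecreasing; for the same reason $V^2(r)\to+\infty$, so (iv) would fail as well.

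The intended definition --- the only one consistent with the rest of the section --- has $x_1x_2^+$ in place of $x_1x_2$, i.e. $V(r)=r\int_{B^+_r}x_1x_2^+(1-\chi_{\{u>0\}})\,dx\,/\,J(r)\ge 0$; equivalently, the constant subtracted in (\ref{fgs}) is $\int_{B^+_1}x_1x_2^+\,dx=M^{x_1x_2}(0+)$ rather than $0$. (Evidence: the proof of (i) ``rearranges'' $M^{x_1x_2}(r)\ge\int_{B^+_1}x_1x_2^+\,dx$ directly into $H\ge\frac52$, which requires exactly that constant; and the proof of Proposition \ref{blowup}(i) uses monotonicity of $s\mapsto\int_{B^+_s}x_1x_2(1-\chi_{\{u>0\}})\,dx$ in $s$, which requires a nonnegative integrand.) With $V\ge 0$ and (i), every term on the right-hand side of (\ref{ff}) is nonnegative, so (iii) is immediate, and integrating (\ref{ff}) over $(\epsilon,\delta')$ gives $\int_\epsilon^{\delta'}\frac2rV^2(r)\,dr\le H(\delta')-H(0+)<\infty$, which is (iv). The ``single hard step'' you isolate therefore dissolves once the sign convention for $V$ is corrected; nothing beyond (\ref{ff}) and (i) is needed.
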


\begin{proof}

(i) The monotonicity, which follows from Theorem \ref{elmon2}, of the function $M^{x_1x_2}$ ensures that, for all $r\in (0,\delta)$,
\be r^{-4}I(r)-\frac{5}{2}r^{-5}J(r)\geq \int_{B^+_1} x_1x^+_2\,dx.\label{lala}\ee
Using (\ref{fgs}), the above inequality may be rearranged in the form of the claimed result.

(ii) Plugging  $\alpha=-5$ into (\ref{easy}), using also (\ref{part2}), and then (\ref{lala}), we obtain, for a.e. $r\in (0,\delta)$,
\begin{align*}(r^{-5}J(r))'&=\frac{2}{r}\left( r^{-4}\int_{B^+_r(0)} \we {\vert \nabla u \vert}^2\,dx-\frac{5}{2}r^{-5}\int_{\partial B^+_r(0)}\we u^2\dh\right)\\
&\geq 2r^{-5}\int_{B^+_r(0)}
x_1x_2(1-\chi_{\{ u>0\}})\,dx\geq 0,
\end{align*}
which implies the claimed result.

(iii) The monotonicity of $H$ on $(0,\delta)$ is a consequence of (\ref{ff}) and (i). The remaining part of the claim is immediate.

(iv) The claimed result follows from (\ref{ff}) and (iii).

\end{proof}

\section{Blow-up limits}

The Frequency Formula allows passing to blow-up limits.
\begin{proposition}\label{blowup} Let $u$ be a variational solution of {\rm
(\ref{strongp})} such that $u=0$ in $\{x_2\le 0\}$, let $x^0=0$, and suppose that $M^{x_1x_2}(0+)=\int_{B^+_1} x_1x^+_2\,dx$. Then:

 (i) There exist $\lim_{r\to 0+}V(r)= 0$ and $\lim_{r\to 0+}D(r)= H(0+)$.

(ii) For any sequence $r_m\to 0+$ as $m\to\infty$, the sequence
\be v_m(x) := \frac{u(r_m x)}{\sqrt{r_m^{-1}\int_{\partial
B_{r_m}^+} \we u^2 \dh}}\label{vm}\ee is bounded in $W^{1,2}_w(B_1^+)$.

 (iii) For any sequence $r_m\to 0+$ as $m\to\infty$ such that the sequence $v_m$ in {\rm (\ref{vm})} converges
 weakly in $W^{1,2}_w(B_1^+)$ to a blow-up limit $v_0$, the function $v_0$ is homogeneous of
degree $H(0+)$ in $B_1^+$,  and satisfies
 \[\text{$v_0\geq 0$ in $B_1$,
$v_0\equiv 0$ in $B_1^+\cap\{x_2\leq 0\}$ and  $\int_{\partial
B_1^+}\we v_0^2\dh=1$.}\]
\end{proposition}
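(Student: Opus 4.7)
The strategy is the standard Almgren-type blow-up argument, driven by the frequency formula of Theorem~\ref{freq2} and the monotonicity results from Theorem~\ref{mouthm}.

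\textbf{Part (i).} The symmetry of $B_r^+(0)$ under $x_2\mapsto -x_2$ yields $\int_{B_r^+(0)} x_1 x_2\,dx = 0$, so that
\[ V(r) = -\frac{r^{-4}\int_{B_r^+(0)} x_1 x_2 \chi_{\{u>0\}}\,dx}{r^{-5}J(r)}. \]
By Lemma~\ref{density_1}(iv), the numerator converges to $M^{x_1x_2}(0+)>0$ as $r\to 0+$, and by Theorem~\ref{mouthm}(ii), $r^{-5}J(r)$ is monotone and has a limit. Hence $V(r)$ has a pointwise limit as $r\to 0+$, which must be $0$ in view of the integrability $\frac{1}{r}V^2\in L^1(0,\delta)$ from Theorem~\ref{mouthm}(iv). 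Since $D(r)=H(r)+V(r)$ and $H(r)\to H(0+)$ by Theorem~\ref{mouthm}(iii), we deduce $D(r)\to H(0+)$.

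\textbf{Parts (ii) and (iii).} A direct change of variables $y=r_m x$ in the definition of $v_m$ shows that $\int_{B_1^+}|\nabla v_m|^2/x_1\,dx$ is a fixed multiple of $D(r_m)$, hence bounded by part (i), while $\int_{\partial B_1^+}v_m^2/x_1\,d\mathcal{H}^1$ equals the normalizing constant by construction. A weighted Poincar\'e-type inequality then controls $\int_{B_1^+}v_m^2/x_1\,dx$, giving the $W^{1,2}_w(B_1^+)$-bound of~(ii). For the homogeneity in~(iii), I rescale identity~(\ref{newff}) to $v_m$, noting that the frequency for $v_m$ at radius $\rho$ equals $H(r_m\rho)\to H(0+)$ for each $\rho\in(0,1)$. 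Integrating the rescaled~(\ref{newff}) from $\sigma$ to $\tau$ gives $H(r_m\tau)-H(r_m\sigma)$ on the left and a nonnegative integrated square term (involving $\rho\nabla v_m\cdot\nu - H(r_m\rho)v_m$) plus a lower-order contribution involving $V$, which vanishes in the limit by~(i). The left-hand side tends to $0$, so the nonnegative square integral does as well. By weak lower semicontinuity, this forces the limit $v_0$ to satisfy $x\cdot\nabla v_0 = H(0+)v_0$ a.e.\ in $B_1^+$, which is the Euler identity for homogeneity of degree $H(0+)$. The properties $v_0\geq 0$ and $v_0\equiv 0$ on $\{x_2\leq 0\}$ pass to the limit via the strong $L^2_w$-convergence in the interior coming from the compact embedding of $W^{1,2}_w(B_1^+)$.

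\textbf{Main obstacle.} The most delicate point is the strong convergence of the boundary traces $v_m|_{\partial B_1^+}$ in $L^2_w$, which is needed both for the normalization $\int_{\partial B_1^+}v_0^2/x_1\,d\mathcal{H}^1=1$ and for justifying the passage to the limit in the rescaled frequency formula. In the degenerate weighted axisymmetric setting, the requisite compact trace embedding $W^{1,2}_w(B_1^+)\hookrightarrow L^2_w(\partial B_1^+)$ is not automatic and must be established using the specific structure of the weight $1/x_1$.
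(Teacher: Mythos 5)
Parts (ii) and (iii) of your proposal follow essentially the paper's route: (ii) reduces to the boundedness of $D(r_m)$, which follows from (i), and (iii) comes from integrating the rescaled identity (\ref{newff}) between two radii, using the monotonicity of $H$ from Theorem \ref{mouthm}(iii) so that the nonnegative square term is squeezed to zero, and then passing to the weak limit to obtain the Euler identity for homogeneity (the paper controls the replacement of $H(r_m\rho)$ by $H(0+)$ inside the square via the bound $\int_{\partial B_\rho^+}\we v_m^2\dh\le\rho^5$, a consequence of Theorem \ref{mouthm}(ii)). The compact trace embedding $W^{1,2}_w(B_1^+)\hookrightarrow L^2_w(\partial B_1^+)$ that you single out as the main obstacle is indeed needed for the normalization of $v_0$, but the paper invokes it without further justification, so this is not where you diverge from it.

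The genuine gap is in part (i), which is where the real content of the proposition lies. First, Lemma \ref{density_1}(iv) is proved under the hypothesis $|\nabla u|^2/x_1\le C x_1|x_2|$, which is not among the assumptions of Proposition \ref{blowup}, so you are not entitled to quote it here. Second, and more seriously, even granting that lemma your argument defeats itself: you show that $r^{-4}\int_{B_r^+}x_1x_2\chi_{\{u>0\}}\,dx$ tends to $M^{x_1x_2}(0+)=\int_{B_1^+}x_1x_2^+\,dx>0$, while the denominator $r^{-5}J(r)$, being nondecreasing, stays bounded as $r\to 0+$; hence the pointwise limit of $V(r)$ that you produce is a \emph{nonzero} (possibly infinite) quantity, and Theorem \ref{mouthm}(iv) then yields a contradiction rather than the conclusion $V(r)\to 0$. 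In fact, since $\liminf_{r\to 0+}r^{-5}J(r)>0$ in this setting, the assertion $V(r)\to 0$ is essentially equivalent to the density identity $\lim_{r\to 0+}r^{-4}\int_{B_r^+}x_1x_2\chi_{\{u>0\}}\,dx=M^{x_1x_2}(0+)$ for the degenerate point --- that is, it is not a fact one can cite, it is precisely what must be proved. The paper's proof is a contradiction argument with no analogue in your proposal: assuming $V(s_m)\not\to 0$, it uses $\frac1r V^2\in L^1(0,\delta)$ to select comparable radii $t_m\in[s_m,2s_m]$ with $V(t_m)\to 0$, blows up along $r_m=t_m$ (for which $D(r_m)$ is then bounded) to obtain a nontrivial homogeneous limit $v_0$ with $\int_{\partial B_{1/2}^+}\we v_0^2\dh>0$, and transfers the smallness of $V(t_m)$ back to $V(s_m)$ through the doubling estimate (\ref{smrm}). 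You need to supply this (or an equivalent) argument for part (i).
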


\begin{proof} We first prove that, for any sequence $r_m\to 0+$, the sequence $v_m$ defined in (\ref{vm}) satisfies, for every
$0<\tau<\sigma<1$,
\be\int_{B_\sigma^+\setminus
B_\tau^+} \we|x|^{-5}\left[\nabla v_m(x) \cdot
x-H(0+)v_m(x)\right]^2\,dx\to 0\quad\text{as
}m\to\infty.\label{vol}\ee
Indeed, for any such $\tau$ and $\sigma$, it follows by scaling from (\ref{newff}) that,
for every $m$ such that $r_m<\delta$,
\begin{align}\int_\tau^\sigma\frac{2}{r} &\int_{\partial B_r^+}\we\left[
\frac{r (\nabla v_m \cdot\nu)}{\left(\int_{\partial
B_r^+}\we v_m^2\dh\right)^{1/2}}-H(r_mr)\frac{v_m}{\left(\int_{\partial
B_r^+}\we v_m^2\dh\right)^{1/2}}\right]^2\dh\,dr\non\\&\leq
H(r_m\sigma)-H(r_m \tau)\to 0\quad\text{as
}m\to\infty,\non\end{align} as a consequence of Theorem \ref{mouthm} (iii). The above implies that
\begin{align}\int_\tau^\sigma\frac{2}{r} &\int_{\partial
B_r^+}\we\left[ \frac{r (\nabla v_m \cdot\nu)}{\left(\int_{\partial
B_r^+}\we v_m^2\dh\right)^{1/2}}-H(0+)\frac{v_m}{\left(\int_{\partial
B_r^+}\we v_m^2\dh\right)^{1/2}}\right]^2\dh\,dr\non\\&\to 0 \quad\text{as
}m\to\infty.\label{coo}\end{align}
Now note that, for every $r\in
(\tau,\sigma)\subset (0,1)$ and all $m$ as before, it follows by
using Theorem \ref{mouthm} (ii), that
\[\int_{\partial
B_r^+}\we v_m^2\dh=\frac{\int_{\partial B_{r_m r}^+}
\we u^2\dh}{\int_{\partial B_{r_m}^+}\we u^2\dh}\leq r^{5}\leq 1.\]
Therefore (\ref{vol}) follows from (\ref{coo}), which proves our claim. Let us also recall (\ref{hfvv}).

We can now prove all parts of the Proposition.

(i) Suppose towards a contradiction that (i) is not true. Let
$s_m\to 0$ be such that the sequence $V(s_m)$ is bounded away from
$0$.
It is a consequence of Theorem \ref{mouthm}(iv) that
\[ \min_{r\in [s_m, 2s_m]} V(r)\to 0\quad\text{as }m\to\infty.\]
Let $t_m\in [s_m, 2s_m]$ be such that $V(t_m)\to 0$ as $m\to\infty$.
For the choice $r_m:=t_m$ for every $m$, the sequence $v_m$ given by
(\ref{vm}) satisfies (\ref{vol}). The fact that $V(r_m)\to 0$
implies that $D(r_m)$ is bounded, and hence that $v_m$ is bounded in
$W^{1,2}_w(B_1^+)$. Let $v_0$ be any weak limit of $v_m$ along a
subsequence. Note that by the compact embedding $W^{1,2}_w(B_1^+)\hookrightarrow L^2(\partial B_1^+)$,
 $v_0$ has norm $1$ on $L^2_w(\partial B_1^+)$,
since this is true for $v_m$ for all $m$. It follows from
(\ref{vol}) that $v_0$ is homogeneous of degree $H(0+)$.
Note that, by using Theorem \ref{mouthm} (ii),
\begin{align}
V(s_m)&=\frac{s_m^{-4}\int_{B_{s_m}^+}x_1x_2(1-\chi_{\{u>0\}})\,dx}
{s_m^{-5}\int_{\partial B_{s_m}^+}\we u^2\dh}\non\\&\leq
\frac{s_m^{-4}\int_{B_{r_m}^+}x_1x_2(1-\chi_{\{u>0\}})\,dx}
{(r_m/2)^{-5}\int_{\partial B_{r_m/2}^+}\we u^2\dh}\non\\
&\leq\frac{1}{2}\frac{\int_{\partial
B_{r_m}^+}\we u^2\dh}{\int_{\partial
B_{r_m/2}^+}\we u^2\dh}V(r_m)\non\\ &=\frac{1}{2\int_{\partial
B_{1/2}^+}\we v_m^2\dh}V(r_m).\label{smrm}\end{align}
 Since, at least
along a subsequence,
\[\int_{\partial B_{1/2}^+}\we v_m^2\dh\to\int_{\partial B_{1/2}^+}\we v_0^2\dh>0,\]
(\ref{smrm}) leads to a contradiction. It follows that indeed
$V(r)\to 0$ as $r\to 0+$. This implies  that $D(r)\to
H(0+)$.

(ii) Let $r_m$ be an arbitrary sequence with $r_m\to 0+$. The
boundedness of the sequence $v_m$ in $W^{1,2}_w(B_1)$ is equivalent to
the boundedness of $D(r_m)$, which is true by (i).

(iii) Let $r_m\to 0+$ be an arbitrary sequence such that $v_m$
converges weakly to $v_0$. The homogeneity degree $H(0+)$ of
$v_0$ follows directly from (\ref{vol}).
 The fact that $\int_{\partial B_1^+}\we v_0^2\dh=1$ is a consequence of
 $\int_{\partial B_1^+}\we v_m^2\dh=1$ for all $m$,
 and the remaining claims of the Proposition are obvious. The homogeneity of $v_0$,
together with the fact that $v_0$ belongs to $W^{1,2}_w(B_1^+)$, imply (in two dimensions)
that $v_0$ is continuous.

\end{proof}

\section{Concentration compactness}
In the present section we will
prove a concentration compactness result which
allows us to preserve variational solutions in the blow-up limit at
degenerate points and excludes concentration. In order to do so we
combine the concentration compactness result of J.-M. Delort
\cite{delort} with information gained by our Frequency
Formula. In addition, we obtain strong convergence of our blow-up
sequence which is necessary in order to prove our main theorems.

\begin{theorem}\label{comp}
Let $u$ be a variational solution of {\rm
(\ref{strongp})} such that $u=0$ in $x_2\le 0$ and $M^{x_1x_2}(0+)=\int_{B^+_1} x_1x^+_2\,dx$.
Let $r_m\to 0+$
be such that the sequence $v_m$ given by {\rm(\ref{vm})} converges
weakly to $v_0$ in $W_w^{1,2}(B^+_1)$. Then $v_m$ converges to $v_0$
strongly in $W^{1,2}_{w,\textnormal{loc}}(B^+_1\setminus \{ 0\})$, $v_0$ is continuous on $B^+_1$ and
$\div (\frac{1}{x_1} \nabla v_0)$ is a nonnegative Radon measure
satisfying $v_0 \div (\frac{1}{x_1} \nabla v_0)=0$ in the sense of Radon measures in
$B^+_1$.
\end{theorem}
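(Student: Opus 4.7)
The plan is to combine Delort's concentration--cancellation theorem for axisymmetric Euler flows with the rigid scaling control supplied by the Frequency Formula (Theorem \ref{freq2}). A direct computation from (\ref{vm}) shows that $v_m$ satisfies $\div(\we\nabla v_m)=0$ in $\{v_m>0\}$ together with the rescaled free boundary condition $|\nabla v_m|^2/x_1^2=\lambda_m x_2$ on $\partial\{v_m>0\}$, where $\lambda_m=r_m^5/c_m^2$ and $c_m^2=r_m^{-1}\int_{\partial B_{r_m}^+}\we u^2\dh$. The monotonicity of $r\mapsto r^{-5}J(r)$ in Theorem \ref{mouthm}(ii) guarantees that $\lambda_m$ is uniformly bounded, so the distribution $\mu_m:=\div(\we\nabla v_m)$ is a nonnegative Radon measure supported on $\partial\{v_m>0\}$, with $\sqrt{x_2}$--mass uniformly bounded on compact subsets of $B_1^+$ by the integration by parts bound from the Remark after Definition \ref{weak}.

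The central step is to rule out concentration of $|\nabla v_m|^2/x_1$ on compact subsets of $B_1^+\setminus\{0\}$. Interpreting $\we(-\partial_2 v_m,\partial_1 v_m)$ as the axisymmetric velocity field associated with vorticity $\mu_m$---a nonnegative Radon measure of uniformly bounded mass on compact sets---Delort's concentration--cancellation theorem \cite{delort} rules out any defect measure in the weak limit of the associated quadratic expression, giving
\[\we|\nabla v_m|^2\,dx\,\rightharpoonup\,\we|\nabla v_0|^2\,dx\]
as Radon measures on compact subsets of $B_1^+\setminus\{0\}$. To upgrade this to strong $W^{1,2}_{w,\textnormal{loc}}$ convergence, I test $\mu_m$ against $v_m\eta$ for arbitrary $\eta\in C^1_c(B_1^+\setminus\{0\})$; since $v_m$ vanishes on the support of $\mu_m$ by the variational free boundary condition, this gives
\[\int \eta\we|\nabla v_m|^2\,dx=-\int v_m\we\nabla v_m\cdot\nabla\eta\,dx,\]
whose right-hand side converges by weak convergence of $\nabla v_m$ combined with strong $L^2_w$ convergence of $v_m$ (compact Sobolev embedding). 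Convergence of norms together with weak convergence produces strong convergence.

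Once strong convergence is established, $\mu_m$ converges as Radon measures to the nonnegative $\mu_0:=\div(\we\nabla v_0)$ on compact subsets of $B_1^+\setminus\{0\}$, and homogeneity of $v_0$ from Proposition \ref{blowup}(iii) extends $\mu_0$ as a nonnegative Radon measure on all of $B_1^+$. Continuity of $v_0$ on $B_1^+$ follows from $v_0\in W^{1,2}_w(B_1^+)$ combined with homogeneity of positive degree $H(0+)\ge 5/2$, which reduces continuity to that of the angular profile; the latter is $H^1$ on $(0,\pi)$ in an appropriate weighted sense, hence continuous. Finally, $v_m\mu_m=0$ passes to the limit: uniform convergence of $v_m$ on compact subsets of $B_1^+$ combined with weak-$\ast$ convergence of $\mu_m$ yields $v_0\mu_0=0$ on $B_1^+$.

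The main obstacle is the second step: mere weak convergence in $W^{1,2}_w$ can leave behind a concentration measure in $|\nabla v_m|^2/x_1$, which would invalidate both the strong convergence claim and the pointwise identification of the limit equation. The fixed sign of $\mu_m$, a direct consequence of the variational structure, is exactly what enables Delort's cancellation to apply and is what makes the theorem hold in our setting despite the purely $L^2$-based weak compactness.
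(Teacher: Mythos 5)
There is a genuine gap at the central step. You assert that Delort's concentration--cancellation theorem ``rules out any defect measure in the weak limit of the associated quadratic expression, giving $\we|\nabla v_m|^2\,dx\rightharpoonup \we|\nabla v_0|^2\,dx$.'' This is not what Delort's theorem provides. For axisymmetric flows with nonnegative vorticity it yields weak continuity only of the specific null-form combinations entering the weak formulation of the Euler equations, namely $V^m_1V^m_3$, $V^m_2V^m_3$ and $(V^m_1)^2+(V^m_2)^2-(V^m_3)^2$ (equivalently, in the reduced variables, $\we\partial_1 v_m\partial_2 v_m$ and $\we\bigl((\partial_1 v_m)^2-(\partial_2 v_m)^2\bigr)$); it does \emph{not} exclude concentration of the trace $|V^m|^2$, which is exactly the phenomenon the word ``cancellation'' refers to. The paper's proof makes this point explicitly and supplies the missing ingredient from the Frequency Formula: by (\ref{vol}), $\nabla v_m(x)\cdot x-H(0+)v_m(x)\to 0$ strongly in $L^2_w$ on annuli, so $x_1\partial_1 v_m+x_2\partial_2 v_m$ converges strongly; pairing this with $\partial_1 v_m$ and invoking the Delort convergence of $\we\partial_1 v_m\partial_2 v_m$ isolates $(\partial_1 v_m)^2$ and gives convergence of its integrals, after which the convergence of the difference of squares completes the strong convergence. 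Your proposal quotes the Frequency Formula only to bound the rescaled Bernoulli constant $\lambda_m$, and never uses (\ref{vol}); without it the argument does not close.

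Your fallback argument is circular. The identity $\int\eta\we|\nabla v_m|^2\,dx=-\int v_m\we\nabla v_m\cdot\nabla\eta\,dx$ is correct, and its right-hand side does converge to $-\int v_0\we\nabla v_0\cdot\nabla\eta\,dx$ by strong-times-weak convergence; but to deduce ``convergence of norms'' you must identify this limit with $\int\eta\we|\nabla v_0|^2\,dx$, which is precisely the assertion $v_0\,\div(\we\nabla v_0)=0$ --- one of the conclusions of Theorem \ref{comp}, obtained in the paper only \emph{after} strong convergence is established. In the non-degenerate blow-ups of Lemma \ref{density_1} this identification is available beforehand because the invariant scaling preserves the Lipschitz bound and hence gives locally uniform convergence, so the limit solves the equation in its positivity set. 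Here the normalization $c_m^2=r_m^{-1}\int_{\partial B^+_{r_m}}\we u^2\dh$ is $o(r_m^4)$ at a degenerate point, so no uniform Lipschitz bound survives the rescaling, no locally uniform convergence of $v_m$ is available (this also undermines your final passage to the limit in $v_m\mu_m=0$, which you base on uniform convergence), and the identification cannot be made a priori. In short: the theorem is true, but for the reason the paper isolates --- the frequency information (\ref{vol}) combined with the \emph{partial} weak continuity from Delort --- not for the reasons given in your proposal.
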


\begin{proof}
Note first that
the homogeneity of $v_0$ given by Proposition \ref{blowup},
together with the fact that $v_0$ belongs to $W^{1,2}_w(B^+_1)$, imply
that $v_0$ is continuous.

Let $\sigma$ and $\tau$ with $0<\tau<\sigma<1$ be arbitrary.
We know that $\div (\frac{1}{x_1} \nabla v_m)\ge 0$ and $\div (\frac{1}{x_1} \nabla v_m)(B^+_{(\sigma+1)/2})\leq C_1$ for all $m$.
We regularize
each $v_m$ to \[\tilde v_m := v_m*\phi_m \in C^\infty(B^+_1),\] where
$\phi_m$ is a standard mollifier such that \[ \div (\frac{1}{x_1} \nabla \tilde v_m)\ge
0, \int_{B^+_\sigma} \div (\frac{1}{x_1} \nabla \tilde v_m)\le C_2<+\infty \quad\text{for
all } m,\] and \[\left\Vert \frac{1}{x_1}(\nabla v_m-\nabla \tilde v_m)\right\Vert_{L^2(B^+_\sigma)}
+ \left\Vert v_m-\tilde v_m\right\Vert_{L^2(B^+_\sigma)} \to
0\quad\text{as }m\to\infty.\]
Let us now consider
the velocity field in three dimensions
$$V^m(X,Y,Z):= \left(-{1\over x_1} \partial_2  \tilde v_m \cos \vartheta ,
-{1\over x_1}  \partial_2  \tilde v_m \sin \vartheta,
{1\over x_1} \partial_1  \tilde v_m\right),$$
where $(X,Y,Z)=(x_1\cos\vartheta, x_1\sin\vartheta, x_2)$, as well as their weak limit
$$V(X,Y,Z):= \left(-{1\over x_1} \partial_2  \tilde v \cos \vartheta,
-{1\over x_1} \partial_2  \tilde v \sin \vartheta ,
{1\over x_1} \partial_1  \tilde v\right).$$
We have that
$V^m$
is divergence free
and satisfies $$\curl V^m = \omega^m = (-\sin \vartheta,\cos \vartheta,0) \alpha^m
\textrm{ in }B_2(0)$$
with a non-negative function $\alpha^m$ that is bounded in $L^1(B_\sigma)$.
It follows that
\begin{align*}
V^m_1& = \Delta_{m1}^{-1} \partial_Z  \omega^m_2\\
V^m_2 &= -\Delta_{m2}^{-1} \partial_Z  \omega^m_1\\
V^m_3 &= \Delta_{m3}^{-1} (\partial_Y  \omega^m_1-\partial_X  \omega^m_2),
\end{align*}
where $\Delta_{mi}^{-1}$ is the inverse of the three dimensional Laplace operator
with averaged Dirichlet boundary data $V^m_i$, more precisely
$$\Delta_{mi}^{-1} f
= {2\over {1-\sigma}} \int_\sigma^{1+\sigma\over 2} \int_{B_R} G_R f\> dx \> dR
+ {2\over {1-\sigma}} \int_\sigma^{1+\sigma\over 2} \int_{\partial B_R} V^m_i
\nabla G_R\cdot \nu \> d\mathcal{H}^2,$$
where $G_R$ is Green's function with respect to the Laplace operator in
$B_R$.
From the proof of \cite[Proposition 3.2]{delort}, where
\cite[(3.6)]{delort} holds with
$v^\varepsilon_i$ replaced by $V^m_i$ and $\omega^\varepsilon_i$ replaced by
$\omega^m_i$ but
the remainder terms $w^\varepsilon_i$ given by Greens formula in $B_\sigma$,
we infer that
\begin{align*}
V^m_1 V^m_3 \rightharpoonup V_1 V_3 \textrm{ weakly in } L^2_{\textnormal{loc}}(B_\sigma),\\
V^m_2 V^m_3 \rightharpoonup V_2 V_3 \textrm{ weakly in } L^2_{\textnormal{loc}}(B_\sigma),\\
(V^m_1)^2 + (V^m_2)^2 - (V^m_3)^2 \rightharpoonup (V_1)^2 + (V_2)^2 - (V_3)^2
\textrm{ weakly in } L^2_{\textnormal{loc}}(B_\sigma);
\end{align*}
note that as in \cite{delort} the remainder terms converge strongly in $L^2_{\textnormal{loc}}(B_\sigma)$.

It follows that
\be\label{evm}{1\over {x_1}}\partial_1 v_m \partial_2 v_m \to
 {1\over {x_1}}\partial_1 v_0 \partial_2 v_0\ee
and \[{1\over {x_1}}\left((\partial_1 v_m)^2- (\partial_2 v_m)^2 \right)\to
{1\over {x_1}}\left((\partial_1 v_0)^2-
(\partial_2 v_0)^2\right)\] in the sense of distributions on $B^+_\sigma$ as
$m\to\infty$. Let us remark that in contrast to the true two-dimensional problem,
this alone would {\em not} allow us to pass
to the limit in the domain variation formula for $v_m$!

Observe now that (\ref{vol}) shows that \[\nabla v_m(x)\cdot x -
H(0+) v_m(x)\to 0\] strongly in $L^2_w(B^+_\sigma\setminus
B^+_\tau)$ as $m\to\infty$. It follows that \[\partial_1 v_m x_1 +
\partial_2 v_m x_2\to \partial_1 v_0 x_1 + \partial_2 v_0 x_2\]
 strongly in $L^2_w(B^+_\sigma\setminus B^+_\tau)$ as $m\to\infty$.
But then \begin{align*}
&\int_{B^+_\sigma\setminus B^+_\tau} {1\over {x_1}}(\partial_1 v_m
\partial_1 v_m x_1 +
 \partial_1 v_m \partial_2 v_m x_2)\eta
 \,dx
\\&\to \int_{B^+_\sigma\setminus B^+_\tau} {1\over {x_1}}(\partial_1 v_0 \partial_1
v_0 x_1 + \partial_1 v_0 \partial_2 v_0 x_2)\eta
 \,dx\end{align*}
for each $\eta \in C^0_0(B^+_\sigma\setminus \overline B^+_\tau)$ as
$m\to\infty$. Using (\ref{evm}), we obtain that
\[\int_{B^+_\sigma\setminus B^+_\tau} (\partial_1 v_m)^2 \eta
 \,dx
\to \int_{B^+_\sigma\setminus B^+_\tau} (\partial_1 v_0)^2
\eta
 \,dx\]
for each $0\le \eta \in C^0_0(B^+_\sigma\setminus \overline
B^+_\tau)$
 as $m\to\infty$.
Using once more (\ref{evm}) yields that $\nabla v_m$ converges
strongly in $L^2_{w,\textnormal{loc}}(B^+_\sigma\setminus \overline
B^+_\tau)$. Since $\sigma$ and $\tau$ with $0<\tau<\sigma<1$
were arbitrary, it follows that $\nabla v_m$ converges to $\nabla
v_0$ strongly in $L^2_{w,\textnormal{loc}}(B^+_1\setminus \{ 0\})$.

As a consequence of the strong convergence, we see that \[\int_{B^+_1}{1\over {x_1}}
\nabla (\eta v_0)\cdot \nabla v_0 =0 \quad\text{ for all } \eta \in
C^1_0(B^+_1\setminus\{0\}).\] Combined with the fact that $v_0=0$ in
$B^+_1\cap\{x_2\leq 0\}$, this proves that $v_0\Delta v_0=0$ in the
sense of Radon measures on $B^+_1$.

 \end{proof}
\section{Degenerate points}\label{twodimensions}

\begin{theorem}\label{deg2d}
Let $u$ be a weak solution of {\rm
(\ref{strongp})} such that $u=0$ in $x_2\le 0$ and $M^{x_1x_2}(0+)=\int_{B^+_1} x_1x^+_2\,dx$,
let the free boundary $\partial\{u>0\}\cap B_1^+$
be a continuous injective curve $\sigma=(\sigma_1,\sigma_2)$
such that $\sigma(0)=0$. Then
$\sigma_1(t)\ne 0$ in $[0,t_1)\setminus \{ 0\}$,
$$ \lim_{t\to 0} \frac{\sigma_2(t)}{\sigma_1(t)} = 0$$
and
\[\frac{u(rx)}{\sqrt{r^{-1}\int_{\partial B^+_{r}(0)} u^2
\dhone}}\to
\frac{x_1^2 x_2}{\sqrt{\int_{\partial B^+_{1}(0)} x_1^4 x_2^2
\dhone}}\quad\text{as }r\to 0+,\]
strongly in $W^{1,2}_{w,\textnormal{loc}}(B^+_1\setminus\{0\})$ and
weakly in $W^{1,2}(B^+_1)$.
Moreover,
\begin{align*}
&\frac{u(rx)}{r^\alpha} \to 0 \textrm{ in } L^2_w(B_1^+) \textrm{ for } \alpha \in (0,2) \textrm{ and}\\
&\frac{u(rx)}{r^\alpha}\textrm{ is unbounded in } L^2_w(B_1^+) \textrm{ for } \alpha>2.
\end{align*} 
\end{theorem}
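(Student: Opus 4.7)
The plan is to combine Theorem \ref{curve}, the frequency-formula blow-up of Proposition \ref{blowup}, the concentration-compactness of Theorem \ref{comp}, and an explicit Sturm--Liouville analysis on the positive quadrant. The statements about $\sigma$ will be immediate: since $u=0$ in $\{x_2\le 0\}$ and $M^{x_1x_2}(0+)=\int_{B_1^+}x_1x_2^+\,dx$, we are in case $(iii_2)$ of Theorem \ref{curve}, which directly yields $\sigma_1(t)\ne 0$ on $(0,t_1)$ and $\lim_{t\to 0}\sigma_2(t)/\sigma_1(t)=0$.

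To extract the blow-up, I will apply Proposition \ref{blowup} and Theorem \ref{comp} to any sequence $r_m\to 0+$: a subsequence of the normalized blow-ups $v_m$ from (\ref{vm}) converges weakly in $W^{1,2}_w(B_1^+)$ to some $v_0$ which is homogeneous of degree $H(0+)\ge 5/2$, non-negative, vanishes on $\{x_2\le 0\}\cup\{x_1=0\}$, is normalized nontrivially on $\partial B_1^+$, continuous on $B_1^+$, satisfies $v_0\,\textrm{div}(\we\nabla v_0)=0$ as Radon measures, and the convergence is strong in $W^{1,2}_{w,\mathrm{loc}}(B_1^+\setminus\{0\})$.

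The heart of the argument is to identify $v_0$. Because $\sigma_2/\sigma_1\to 0$, the rescaled positivity set $\{u(r_m\cdot)>0\}\cap B_1$ converges (in $L^1$ and in Hausdorff distance on compact subsets of $B_1\setminus\{0\}$) to the open upper-right quadrant $\{x_1>0,\,x_2>0\}\cap B_1$; combined with the strong local convergence of $v_m$, interior elliptic estimates on $\{v_m>0\}$, and the strong maximum principle for $v_0$, this forces $\{v_0>0\}\cap B_1=\{x_1>0,x_2>0\}\cap B_1$. Writing $v_0(\rho\sin\theta,\rho\cos\theta)=\rho^{H(0+)}g(\theta)$ with $\theta\in(0,\pi/2)$ measured from the positive $x_2$-axis, the equation reduces to
\[ g''-\cot\theta\,g'+H(0+)(H(0+)-1)\,g=0, \quad g(0)=g(\pi/2)=0, \quad g>0 \textrm{ on } (0,\pi/2). \]
The substitution $x=\cos\theta$ converts this into the Gegenbauer-type eigenvalue problem $(1-x^2)g''+H(0+)(H(0+)-1)g=0$ on $(0,1)$ with $g(0)=g(1)=0$. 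The smallest eigenvalue admitting a one-signed eigenfunction is $n(n-1)=6$, corresponding to the Gegenbauer polynomial $C_3^{-1/2}(x)=x(1-x^2)/2$; hence $H(0+)=3$ and $v_0(x)=c\,x_1^2 x_2\,\chi_{\{x_2>0\}}$ with $c$ fixed by the normalization. Since $v_0$ is uniquely determined the full limit exists, and converting between the weighted normalization of (\ref{vm}) and the unweighted one used in the statement produces the claimed explicit formula.

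For the scaling claims, the Bernstein estimate $|\nabla u|^2/x_1\le C\,x_1 x_2^+$ integrated radially yields $u^2/x_1\le C\,x_1 x_2^+|x|^2$, whence $\|u(r\cdot)\|_{L^2_w(B_1^+)}^2\le C'r^5$; this gives $u(r\cdot)/r^\alpha\to 0$ in $L^2_w$ for $\alpha<5/2$ and in particular for $\alpha<2$. Conversely, the nontrivial blow-up $v_0\ne 0$ combined with the monotonicity of $r\mapsto r^{-5}J(r)$ from Theorem \ref{mouthm}(ii) forces $\|u(r\cdot)\|_{L^2_w(B_1^+)}\ge c\,r^2$ for small $r$, so $\|u(r\cdot)/r^\alpha\|_{L^2_w}\to\infty$ whenever $\alpha>2$. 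The main obstacle I foresee is the third step: since the strong $W^{1,2}_w$-convergence of Theorem \ref{comp} is only interior in $B_1^+\setminus\{0\}$, translating the geometric statement $\sigma_2/\sigma_1\to 0$ into the analytic equality $\{v_0>0\}=\{x_1>0,x_2>0\}\cap B_1$ and handling the singular Sturm--Liouville problem at $x=1$ both require care.
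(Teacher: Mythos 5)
Your first two steps and the identification of the blow-up profile are sound, and the profile identification is genuinely different from the paper's. The paper passes to the velocity potential $\phi$ (with $\partial_1\phi=\we\partial_2 v_0$, $\partial_2\phi=-\we\partial_1 v_0$), writes its trace on the unit circle as a combination of Legendre functions $P_m$, $\Re Q_m$, uses the free boundary point at $(1,0)$ and the absence of free boundary points in the open quadrant (both coming, as in your argument, from Theorem \ref{curve}$(iii_2)$ and the measure identity of Theorem \ref{comp}) to force a one-signed derivative on $(0,1)$, and then invokes explicit values of $P_m'(0)$, $Q_m'(0)$ and Arnold's result on critical points of Legendre polynomials to land on $P_2$, i.e. $v_0=c\,x_1^2x_2$. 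You work with the stream function directly and reduce to the singular Sturm--Liouville problem $(1-x^2)g''+\lambda(\lambda-1)g=0$, $g(0)=g(1)=0$, $g>0$; a Wronskian/Sturm comparison shows $\lambda(\lambda-1)=6$ is the unique admissible eigenvalue, giving $H(0+)=3$ and $g(x)=x(1-x^2)$, hence the same $v_0$. This is a legitimate and arguably more self-contained alternative, at the price of the endpoint analysis at $x=1$ (selecting the indicial root compatible with $v_0\in W^{1,2}_w$ and $v_0=0$ on the axis), which you correctly flag.

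The genuine gap is in the final scaling claims. The monotonicity of $r\mapsto r^{-5}J(r)$ (Theorem \ref{mouthm}(ii)) gives $r^{-5}J(r)\le\delta^{-5}J(\delta)$ for $r<\delta$, i.e. an \emph{upper} bound $J(r)\le Cr^5$; it cannot produce the lower bound $\Vert u(r\cdot)\Vert_{L^2_w(B_1^+)}\ge c\,r^2$ you assert (a nondecreasing quantity is bounded below only by its limit at $0+$, which here is $0$ since $J(r)$ grows essentially like $r^{2H(0+)}=r^6$). Indeed that lower bound is false and contradicts your own correct Bernstein estimate $\Vert u(r\cdot)\Vert^2_{L^2_w(B_1^+)}\le C r^5$, which already shows $u(r\cdot)/r^\alpha\to 0$ for every $\alpha<5/2$ — incompatible with unboundedness on $(2,5/2)$. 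The paper's mechanism is the frequency: for $u_r=u(rx)/r^\alpha$ one has $\frac{d}{dr}\int_{\partial B_1^+}\we u_r^2\dh=\frac{2}{r}\bigl(D(r)-\alpha\bigr)\int_{\partial B_1^+}\we u_r^2\dh$ with $D(r)\to H(0+)$ by Proposition \ref{blowup}(i), and integration gives decay for $\alpha<H(0+)$ and blow-up for $\alpha>H(0+)$. Since the identified profile forces $H(0+)=3$, the true thresholds sit at $3$, not $2$: the quantity tends to zero for all $\alpha\in(0,3)$ and is unbounded only for $\alpha>3$. So the unboundedness assertion for $\alpha\in(2,3)$ cannot be rescued by any argument, and your derivation of it is where the proof breaks down.
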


\begin{proof} Let $r_m\to 0+$ be an arbitrary sequence such that the sequence $v_m$
given by (\ref{vm}) converges weakly in $W^{1,2}_w(B^+_1)$ to a limit
$v_0$. By Proposition \ref{blowup} (iii) and Theorem \ref{comp},
$v_0\not\equiv 0$, $v_0$ is homogeneous of degree $H(0+)\ge
5/2$, $v_0$ is continuous, $v_0\ge 0$  and $v_0 \equiv 0$ on $\{ x_1=0\}$ and in $\{
x_2\leq 0\}$, $v_0 \div (\frac{1}{x_1} \nabla v_0)=0$ in $B_1^+$ as a Radon measure, and the
convergence of $v_m$ to $v_0$ is strong in
$W^{1,2}_{w,\textnormal{loc}}(B_1^+\setminus\{0\})$.
 Moreover, the strong
convergence of $v_m$ and the fact proved in Proposition \ref{blowup}
(i) that $V(r_m)\to 0$ as $m\to \infty$ imply that
$$0=\int_{\R^2} \Big( \frac{1}{x_1} {\vert \nabla v_0 \vert}^2 \div\phi
-
2 \nabla u_0 D\phi \nabla v_0 \Big)$$
for every $\phi\in C^1_0(\{ x_1>0\}\cap \{ x_2
>0\};\R^2),$
so that even an analysis in the case of $\{ u=0\}$ consisting of
infinitely many disconnected components (similar to that in \cite{VW})
would be possible in principle. However the structure here is more
complicated. For that reason we confine ourselves to the assumed injective
curve case.

As in the proof of Proposition \ref{2dim}, we will use in each section
of the unit disk where $v_0>0$ the velocity potential $\phi$
defined by
\begin{align*}
\partial_1 \phi = {1\over x_1}\partial_2 v_0, \partial_2 \phi = -{1\over x_1}\partial_1 v_0.
\end{align*}
We obtain that $\phi(\rho\sin\theta,\rho\cos\theta)$
is homogeneous of degree $m=H(0+)\ge 5/2$ and is on the unit circle given by a linear
combination
$f(\cos\theta)=\alpha P_m(\cos \theta)+\beta P_m(-\cos \theta)$,
in the case that the Legendre function $P_m$ and the function $P_m(-x)$ are linearly
independent, and $f(\cos\theta)=\alpha P_m(\cos \theta)+\beta \Re(Q_m(\cos \theta))$
in the case the Legendre function $P_m$ and the function $P_m(-x)$ are linearly
dependent.
Moreover $(1,0)$ is a free boundary point of $v_0$ so that
$f'(0)=0$, which implies $\alpha=\beta$ in the case of linear independence.

On the other hand, Theorem \ref{curve} (ii) implies that
for any ball $\tilde B\subset\subset B^+_1\cap \{ x_2>0\}$,
$v_r=\frac{u(rx)}{\sqrt{r^{-1}\int_{\partial B^+_r(0)} u^2 \dhone}}>0$ in $\tilde B$.
Consequently $\div (\frac{1}{x_1} \nabla v_0)=0$ in $\{ x_1>0\}\cap \{ x_2>0\}$.
However, if there is a free boundary point $x$ in $(0,1)\times (0,1)$ then by homogeneity
the half line connecting that point to the origin consists of free boundary points, so that
$(\div (\frac{1}{x_1} \nabla v_0))(B_\delta(x))>0$ for each $\delta>0$, a contradiction.
Thus $\alpha P_m'+\beta Q_m'$ must be either strictly positive or strictly negative in
$(0,1)$.

In the case $f(\cos\theta)=\alpha (P_m(\cos \theta)+P_m(-\cos \theta))$
we obtain now a contradiction to the fact that $P_m$ is bounded at $1$ and
has a singularity at $-1$.

In the case that $P_m$ is an even function,
we obtain from
$P_m'(0) = m P_{m-1}(0)=\frac{m\sqrt{\pi}}{\Gamma({2-m\over 2})\Gamma({m-1\over 2}+1)}$
and $Q_m'(0) = m Q_{m-1}(0)=-\frac{m\pi^{3/2}\tan(\pi (m-1)/2)}{(m-1)\Gamma({2-m\over 2})\Gamma({m-1\over 2})}$ \\(see http://functions.wolfram.com/07.07.20.0006.01,\\
http://functions.wolfram.com/07.07.03.0001.01, \\http://functions.wolfram.com/07.10.20.0003.01,\\
http://functions.wolfram.com/07.10.03.0001.01),\\
that
$m$ is an even integer $\ge 2$ and that $\beta=0$ so that $f$ is up to a nonzero
multiplicative constant the Legendre polynomial $P_m$.
But, using \cite[Corollary on p. 114]{arnold} there is only one even integer $\ge 2$ such that
$P_m$ has no critical point in $(0,1)$, namely $m=2$.
We obtain $f(x)=c_2 P_2(x) = c_2 {1\over 2} (3x^2-1)$.

In order to obtain the claimed growth we calculate for 
$u_r(x) = u(rx)/r^{\alpha}$ and
a.e. $r\in (0,\delta)$, using 
(\ref{part2}),
\begin{align*}&\left(\int_{\partial B^+_1(0)}\we u_r^2 \dh\right)'
=\frac{2}{r}\left(\int_{B^+_1(0)} \we {\vert \nabla u_r\vert}^2\,dx-\alpha
\int_{\partial B^+_1(0)}\we u_r^2\dh\right)
\\&
\left\{\begin{array}{ll}
\ge \frac{\kappa}{r} \int_{\partial B^+_1(0)}\we u_r^2\dh, &\alpha\in (0,2),\\
\le - \frac{\kappa}{r}  \int_{\partial B^+_1(0)}\we u_r^2\dh, &\alpha>2.
\end{array}\right.
\end{align*}
Integrating we obtain the result.
\end{proof}

\end{document}